\documentclass[10pt,a4paper]{article}
\usepackage{amsfonts,amsgen,amsmath,amstext,amsbsy,amsopn,amsthm,cases,listings
}
\usepackage{multirow}
\usepackage{epstopdf}
\usepackage{graphicx}   
\usepackage{pgf,tikz}
\usepackage{mathrsfs}
\usepackage[marginal]{footmisc}
\usepackage{enumerate}
\usepackage{enumitem}
\usepackage[titletoc]{appendix}
\usepackage{booktabs}
\usepackage{url}
\usepackage{mathtools}
\usepackage{pdflscape} 
\usepackage{pgfplots}
\usepackage{authblk}
\usepackage{amssymb}
\usepackage{wasysym}
\usepackage{empheq}
\usepackage{dsfont}
\usepackage{tikz}
\usepackage{longtable}
\usepackage{float}
\usepackage{subfigure}
\pgfplotsset{compat=1.18}
\usepackage{mathrsfs}
\usepackage{wasysym} 
\usetikzlibrary{arrows}
\usepackage{aligned-overset}
\usepackage{bm}
\usepackage{bbm}
\usepackage[T1]{fontenc}
\usepackage[hidelinks]{hyperref}
\usepackage[a4paper,margin=2.5cm]{geometry}
\usepackage[thinc]{esdiff}

\allowdisplaybreaks[1]

\definecolor{uuuuuu}{rgb}{0.27,0.27,0.27}
\definecolor{sqsqsq}{rgb}{0.1255,0.1255,0.1255}

\newtheorem{definition}{Definition} [section]
\newtheorem{theorem}[definition]{Theorem}
\newtheorem{lemma}[definition]{Lemma}
\newtheorem{proposition}[definition]{Proposition}
\newtheorem{remark}[definition]{Remark}

\newtheorem{conjecture}[definition]{Conjecture}
\newtheorem{claim}[definition]{Claim}

\newtheorem{observation}[definition]{Observation}
\newtheorem{fact}[definition]{Fact}

\newcommand{\R}{\mathbb R}
\newcommand{\N}{\mathbb N}
\newcommand{\ind}{\mathbf 1}
\newcommand{\cc}{\operatorname{cc}}
\newcommand{\normsq}[1]{\|#1\|_2}
\newcommand{\norm}[1]{\left\lVert#1\right\rVert}

\usepackage{indentfirst} 
\begin{document}
\title{\bf\Large Fixed-density profiles for the semi-induced 4-vertex star}
\renewcommand\Authfont{\normalsize}
\renewcommand\Affilfont{\small}
\setlength{\affilsep}{0.35em}
\renewcommand\Authands{, }
%
\author[1]{Jinghua~Deng\thanks{Email: \texttt{Jinghua\_deng@163.com}}}
\author[1]{Jianfeng~Hou\thanks{Research supported by National Key R$\&$D Program of China (Grant No. 2023YFA1010202) and the Central Guidance on Local Science and Technology Development Fund of Fujian Province (Grant No. 2023L3003). Email: \texttt{jfhou@fzu.edu.cn}}}
\affil[1]{\small Center for Discrete Mathematics, Fuzhou University, Fuzhou, China}
\maketitle
\begin{abstract}
We study the fixed-density semi-inducibility profiles of the red-blue star
$S_{2,1}$, which has one distinguished center, two red edges and one blue
edge.  For an $n$-vertex graph $G$, let $N(S_{2,1},G)$ be the number of
injective labeled copies in which the two red edges of $S_{2,1}$ are mapped
to edges of $G$ and its blue edge is mapped to a non-edge of $G$, that is, 
\begin{align*}
    N(S_{2,1},G)=
    \sum_{v\in V(G)} d(v)(d(v)-1)(n-1-d(v)).
\end{align*}
For every fixed red edge density $\beta\in[0,1]$, we determine both
extremal $S_{2,1}$-densities.  On the upper side, we prove the missing
low-density range and, together with the theorem of Balogh, Lidick\'y,
Mubayi, Pfender and Volec for $\beta\ge 1/4$, obtain the full
four-branch profile predicted in their work.  On the lower side, we show
that the natural endpoint profile coming from the quasi-star and
quasi-clique constructions is not universal; the correct minimum is
given by a one-parameter three-class complement-split family. The proofs use a transfer argument with degree-square
tie-breaking, reducing the extremal analysis to almost-regular, threshold and
finite-staircase optimizations.

\end{abstract}





\section{Introduction}\label{introduction}
The inducibility problem, introduced  by  Pippenger and Golumbic~\cite{PippengerGolumbic},  asks for the largest possible asymptotic density of induced copies of a fixed graph. Determining the inducibility of a graph appears to be non-trivial even for simple graphs such as the four-vertex path and cycles. Exact inducibility results are known for several families and sporadic small graphs; see, for example, \cite{BrownSidorenko1994,BollobasEgawaHarrisJin1995,
HatamiHirstNorine2014,BodnarPikhurko,BodnarGaoLeonLiuPikhurkoSun2026}. On the other hand, inducibility has become a standard testing ground for modern extremal methods. Graph-limit compactness turns the problem into a variational problem over graphons~\cite{LovaszSzegedy}; flag algebras provide semidefinite certificates
for many exact bounds~\cite{Razborov}; and symmetrization and stability
arguments are used to identify and certify extremal constructions.

The semi-inducibility problem  is a red-blue analogue of inducibility. A \emph{red-blue graph} $H$ is a graph whose edges are colored red or blue; pairs of vertices which are not edges of $H$ are left unconstrained. Let $G$ be an $n$-vertex  graph, viewed as a complete red-blue graph by coloring the edges of $G$ red and the non-edges blue.  A \emph{semi-induced} copy of $H$ in $G$ is an injective map
$\phi:V(H)\to V(G)$ such that 
\begin{align*}
    \phi(x)\phi(y)\in E(G) \text{ whenever $xy$ is red},\quad\phi(x)\phi(y)\notin E(G) \text{ whenever $xy$ is blue.}
\end{align*}
Writing $|V(H)|=h$, define
\begin{align*}
    N(H,G)\coloneqq \#\{\text{semi-induced injective labeled copies of $H$ in $G$ }\}\quad \text{ and }\quad p(H,G)=\frac{N(H,G)}{n^h}.
\end{align*}
The  \emph{semi-inducibility problem} asks for the maximum limiting value of $p(H,G)$ over all $n$-vertex graphs $G$. In this language, ordinary induced-copy density is the special case in which
every pair of $H$ is colored, while ordinary subgraph density is obtained by
leaving all non-edge pairs uncolored.

The semi-inducibility problem was first studied by  Basit, Granet, Horsley, K\"undgen and Staden~\cite{BGHKS}, who established sharp or nearly sharp results for alternating walks, alternating cycles of length divisible by 4, and cycles of length 4 under every colour pattern. Very recently, Chen and Noel~\cite{CN25} determined the semi-inducibility of alternating cycles of length 6 using flag algebras, thereby resolving an open problem posed in \cite{BGHKS}.

We consider the fixed-density version of the semi-inducibility problem. For an $n$-vertex $m$-edge graph $G$, let $\beta_G\coloneqq \frac{2m}{n(n-1)}$ be the \emph{density} of $G$.  Let $\beta\in[0,1]$ and let $H$ be a fixed red-blue graph. A graph sequence $(G_n)\coloneqq(G_n)_{n=1}^{\infty}$ with $|V(G_n)|=n$, has \emph{red density $\beta$} if $\beta_{G_n}\to\beta$. The sequence $(G_n)$ is $H$-good if
$$p(H,(G_n))\coloneqq \lim_{n\to\infty} p(H,G_n)$$
exists. Define 
\begin{align*}
    I(H,\beta)=\sup p(H,(G_n)), \text{ and } i(H,\beta)=\inf p(H,(G_n))
\end{align*}
where the supremum and infimum are taken over all $H$-good sequences with red density $\beta$.

Clearly, $ I(H,\beta)$ and $ i(H,\beta)$ are the upper and the lower profiles obtained after fixing the red edge density.  A fixed-red-density question in this setting appears as Problem 9.3 in~\cite{BGHKS}. This is analogous to the feasible -region framework introduced by Liu and Mubayi~\cite{LiuMubayiFeasible}, in which one optimizes one density parameter subject to another being fixed.

Most known results on determining $I(H,\beta)$ concern small red-blue graphs
$H$, and the problem is trivial when $H$ has at most two vertices. For
three-vertex monochromatic graphs, the fixed-density values follow from
classical extremal results, namely the Kruskal--Katona
theorem~\cite{Kruskal1963,Katona1968}, the theorem of Reiher and
Wagner~\cite{ReiherWagner}, and the theorem of Ahlswede and
Katona~\cite{AhlswedeKatona}. For non-monochromatic three-vertex graphs, the
values follow from an observation of Liu, Mubayi and Reiher~\cite{LiuMubayiReiher},
together with Razborov's triangle-density theorem~\cite{RazborovTriangle} when
all three pairs are prescribed. Hence $I(H,\beta)$ is completely determined
for every red-blue graph $H$ on three vertices.


The next case is $|V(H)|=4$. For the usual semi-inducibility problem, this
case has largely been settled~\cite{BGHKS,BodnarPikhurkoSemi2025}. In the
fixed-density setting, Balogh, Lidick\'y, Mubayi, Pfender and
Volec~\cite{BLMPV} proved that
$I(AP_4,\beta)=\beta^2(1-\beta)$ for the alternating path $AP_4$, and gave
partial sharp results for the alternating four-cycle.

A further natural four-vertex example is the red-blue star $S_{2,1}$, the
star with one distinguished centre, two ordered red leaves, and one blue leaf.
For an ordinary host graph $G$, one has
\begin{align}\label{EQ:number-S2,1-degree}
N(S_{2,1},G)
=
\sum_{v\in V(G)} d(v)(d(v)-1)(n-1-d(v)),
\end{align}
where $d(v)$ is the degree of $v$. Balogh, Lidick\'y, Mubayi, Pfender and
Volec~\cite{BLMPV} considered the high-density profile of $S_{2,1}$ and
proved that

\begin{theorem}[Balogh--Lidick\'y--Mubayi--Pfender--Volec~\cite{BLMPV}]\label{thm:dense-upper-profile}
    For every $\beta\in[1/4,1]$,
    \begin{align*}
        I(S_{2,1},\beta)=
        \begin{cases}
        \beta/4, & 1/4\le \beta\le 1/2,\\[1mm]
        \beta^2(1-\beta), & 1/2\le \beta\le 1.
        \end{cases}
\end{align*}
\end{theorem}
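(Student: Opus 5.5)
Normalizing degrees as $x_v=d(v)/n$, formula \eqref{EQ:number-S2,1-degree} gives $p(S_{2,1},G)=\frac1n\sum_v x_v^2(1-x_v)+o(1)$. The problem therefore becomes an optimization over degree distributions. Fix a probability measure $\mu$ on $[0,1]$, the limiting degree distribution, with $\int x\,d\mu=\beta$. We want to maximize $\int x^2(1-x)\,d\mu$ subject to $\mu$ being realizable as the degree distribution of a graphon. For the upper bound we first drop the realizability constraint. Set $f(x)=x^2(1-x)$, which is concave exactly on $[1/3,1]$ and attains its maximum $4/27$ at $x=2/3$.

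\emph{Range $\beta\ge1/2$.} The construction is the quasi-random graph $G(n,\beta)$: it is regular of degree $\beta n$ and gives $\beta^2(1-\beta)$. For the upper bound we use concavity of $f$ on $[1/3,1]$ together with a tangent-line bound. Let $\ell$ be the tangent line to $f$ at $\beta$. We need $f(x)\le\ell(x)$ for all $x\in[0,1]$, not only on $[1/3,1]$. The tangent at $\beta$ is $\ell(x)=f(\beta)+f'(\beta)(x-\beta)$, and
\[
f(x)-\ell(x)=-(x-\beta)^2\,(x+2\beta-1).
\]
This is $\le 0$ for every $x\ge 0$ precisely when $2\beta-1\ge0$, i.e.\ when $\beta\ge1/2$. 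Integrating against $\mu$ then gives $\int f\,d\mu\le \ell(\beta)=f(\beta)$, and this range is done.

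\emph{Range $1/4\le\beta\le1/2$.} Here the unconstrained measure bound is too weak. The concave envelope of $f$ is the line through the origin tangent to $f$; it touches at $x=1/2$ with slope $1/2\cdot 1/2=1/4$, giving $\int f\,d\mu\le \beta/4$. The construction is the one suggested by the bound: a fraction $2\beta$ of vertices carry degree $n/2$ and the rest are isolated, for example a random graph of density $1/2$ on $2\beta n$ vertices. Then every non-isolated vertex has degree $(1/2)(2\beta n)=\beta n$, which is not $n/2$. So a disjoint structure does not work, and we must realize degree $n/2$ on a set $A$ of size $2\beta n$ while vertices outside $A$ have degree near $0$. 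This forces $A$-vertices to get their degree inside $A$, so $|A|\ge n/2$, i.e.\ $\beta\ge1/4$. That is exactly why the threshold is $1/4$: for $\beta\in[1/4,1/2]$ we take a $(n/(2|A|))$-dense quasi-random graph on $A$, with $|A|=2\beta n\ge n/2$. The upper bound is then simply the tangent-line bound
\[
f(x)\le x/4 \quad\text{on }[0,1],
\]
which holds because $x/4-x^2(1-x)=x(x-1/2)^2\ge0$. Integrating gives $\beta/4$.

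Both upper bounds are pure pointwise inequalities, so no graphon realizability issue arises on the upper side. The only care needed is the error terms: the $d(d-1)$ versus $d^2$ discrepancy and the $n-1$ versus $n$ discrepancy are each $O(1/n)$, and $\beta_{G_n}\to\beta$ makes the error vanish in the limit. The main obstacle is verifying that the constructions achieve the bounds, in particular the realizability argument for $\beta\in[1/4,1/2]$ just given. Since this is a quoted theorem, the plan mainly confirms that it is consistent with the degree reduction.
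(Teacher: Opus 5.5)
Your argument is correct and self-contained. Note that the paper gives no proof of this statement: it is quoted from \cite{BLMPV} and used only as a black box for $\beta\ge 1/4$, so there is no in-paper argument to compare against.

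What you prove is the following. Because $N(S_{2,1},G)$ depends only on the degree sequence, $I(S_{2,1},\beta)$ is at most the concave envelope of $f(x)=x^2(1-x)$ for every $\beta\in[0,1]$. This comes from $x/4-f(x)=x(x-1/2)^2\ge 0$ and, for $\beta\ge 1/2$, from $f(x)-\ell(x)=-(x-\beta)^2(x+2\beta-1)\le 0$ on $[0,1]$. Equality forces the degree distribution to concentrate on the contact set of the supporting line. For $1/4\le\beta\le 1/2$ this is the two-point distribution with mass $2\beta$ at $1/2$ and mass $1-2\beta$ at $0$. Realizing it by graphs needs $2\beta n\ge n/2$ up to lower-order terms, i.e.\ $\beta\ge 1/4$.

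This explains the threshold $1/4$ and contrasts usefully with Section~\ref{sec:upper}. For $0<\beta<1/4$ the answer in Theorem~\ref{thm:upper-profile} is strictly below $\beta/4$, so the pointwise relaxation is no longer tight. That is why the paper must encode realizability through chosen upper-extremal graphs, the transfer Lemma~\ref{lem:transfer}, and the threshold/almost-regular dichotomy. Your route buys a two-line upper bound but says nothing below $1/4$; the paper's machinery is heavier but handles the range where graphicality actually binds.

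One expository correction. The remark that ``a disjoint structure does not work'' is misleading, since your final construction is still a graph on $A$ together with isolated vertices. The only fix needed is to make $G[A]$ (nearly) $\lfloor n/2\rfloor$-regular, i.e.\ of density $1/(4\beta)\le 1$ rather than $1/2$, with a clique on $A$ at $\beta=1/4$. The last sentence of that paragraph in fact does this.
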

For $\beta\in [0,1]$, let 
\begin{align*}
        s(\beta)\coloneqq
        \max\{xy^2(1-y)+y(x+y)^2(1-x-y):
        x,y\ge0,\ x+y\le1,\ 2xy+y^2=\beta\}.
\end{align*}
Let $t_*\in(0,1/3)$ be the unique root of
\begin{align}\label{eq:switching-equation}
        6t^5-384t^4+338t^3-89t^2-4t+2=0.
\end{align}

\begin{proposition}\label{lem:split-quasiclique-comparison}
If $0<\beta<t_*^2$, then $s(\beta)>\beta^{3/2}(1-\sqrt{\beta})$; if $t_*^2\le \beta\le 1/4$, then 
$s(\beta)=\beta^{3/2}(1-\sqrt{\beta})$.
\end{proposition}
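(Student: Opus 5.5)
We reduce the statement to a one-variable comparison along the constraint curve. Write $F(x,y)=xy^2(1-y)+y(x+y)^2(1-x-y)$ on the constraint set $\{x,y\ge 0,\ x+y\le 1,\ 2xy+y^2=\beta\}$. The endpoint $x=0$, $y=\sqrt\beta$ is feasible and gives
\[
F(0,\sqrt\beta)=\beta(1-\sqrt\beta)\cdot\sqrt\beta=\beta^{3/2}(1-\sqrt\beta),
\]
so $s(\beta)\ge \beta^{3/2}(1-\sqrt\beta)$ always holds. The task is therefore to decide when some point with $x>0$ does strictly better.

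We parametrize by $y=t\in(0,\sqrt\beta]$, so that $x=(\beta-t^2)/(2t)$ and $x+y=(\beta+t^2)/(2t)$. The condition $x+y\le 1$ becomes $t^2-2t+\beta\le 0$, that is, $t\ge 1-\sqrt{1-\beta}$. This gives
\[
g_\beta(t)\coloneqq F\Bigl(\tfrac{\beta-t^2}{2t},t\Bigr)=\tfrac{(\beta-t^2)t(1-t)}{2}+\tfrac{(\beta+t^2)^2}{4t}\Bigl(1-\tfrac{\beta+t^2}{2t}\Bigr),
\]
and $s(\beta)=\max g_\beta$ over $t\in[1-\sqrt{1-\beta},\sqrt\beta]$, with $g_\beta(\sqrt\beta)$ equal to the quasi-clique value.

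It is convenient to swap roles and regard $\beta$ as a function of the free parameters. We set $\beta=u^2$ with $u=\sqrt\beta$ and $t=\lambda u$ with $\lambda\in(0,1]$. Then
\[
D(u,\lambda)\coloneqq g_{u^2}(\lambda u)-u^3(1-u)
\]
is a polynomial in $u$ divided by a power of $\lambda$. Factoring out $u^3$ leaves an expression that is affine/quadratic in $u$, of the form $u^3\bigl(A(\lambda)-uB(\lambda)\bigr)$ up to a positive factor. Hence the sign of $D$ is governed by comparing $u$ with $A(\lambda)/B(\lambda)$.

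We then define
\[
\Phi(\lambda)\coloneqq A(\lambda)/B(\lambda),\qquad u_*\coloneqq \sup_{\lambda\in(0,1)}\Phi(\lambda),
\]
restricted to the feasible set, where the constraint $x+y\le 1$ also restricts $\lambda$. With this in hand:
\begin{itemize}
\item For $u<u_*$, choosing $\lambda$ near the maximizer gives $D>0$, so $s(\beta)$ exceeds the quasi-clique value.
\item For $u\ge u_*$, we have $D\le 0$ for all feasible $\lambda$, so $s(\beta)=\beta^{3/2}(1-\sqrt\beta)$.
\end{itemize}
The critical-point equation $\Phi'(\lambda)=0$, combined with $u=\Phi(\lambda)$, is then eliminated, for instance by a resultant or by solving for $\lambda$ rationally in terms of $u$. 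This should reproduce the quintic \eqref{eq:switching-equation} in $t=u_*$, giving $u_*=t_*$ and hence the threshold $\beta=t_*^2$. We must also check that the maximizing $\lambda$ lies in the interior and satisfies $x+y\le1$ for all $u\le 1/2$, and that the sign structure is monotone in $u$, i.e.\ that $B(\lambda)>0$ on the relevant range.

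The main obstacle is the sign analysis of $D$. The dependence on $u$ may not split as cleanly as the form $A-uB$ suggests, since terms of order $u^4/\lambda$ and $u^5$ appear. If it does not split, we would treat $D(u,\lambda)/u^3$ as a polynomial in $u$ of degree at most $2$, and analyze it as follows:
\begin{itemize}
\item We show that for each fixed $u\in[t_*,1/2]$ it is nonpositive on $\lambda\in(0,1]$, using that it vanishes at $\lambda=1$ with a controlled sign of the derivative there, together with a discriminant argument.
\item At $u=t_*$ we identify a double root in $\lambda$, obtained as the resultant in $\lambda$ of the polynomial and its $\lambda$-derivative, which we expect to yield \eqref{eq:switching-equation}.
\item We argue that for $u<t_*$ this double root splits, producing a positive region.
\end{itemize}
Finally, uniqueness of the root $t_*\in(0,1/3)$ should be checked via Sturm sequences or sign changes, so that the threshold is well defined.
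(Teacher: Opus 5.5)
Your route is the paper's route. With $\beta=u^2$ and $y=\lambda u$, the difference is exactly $u^3\frac{(1-\lambda)^2}{8\lambda^2}\bigl(a(\lambda)-u\,b(\lambda)\bigr)$, where $a=-2\lambda\bigl((1+\lambda)^2-2\bigr)$ and $b=(1+\lambda)^2(1-3\lambda^2)$; your $A,B$ are these times a positive factor. Everything is homogeneous of degrees $3$ and $4$ in $u$, so no $u^5$ terms occur and your fallback is unnecessary. The threshold is the maximum of $a/b$, identified through the discriminant of $\gamma b-a$ in $\lambda$, which equals $256\,Q(\gamma)$.

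\textbf{The gap.} The assumption $b(\lambda)>0$ on the feasible range is false. The coefficient $b$ vanishes at $\lambda=1/\sqrt3$ and is negative on $(1/\sqrt3,1)$, where $a<0$ as well. There $a/b\to+\infty$ as $\lambda\downarrow 1/\sqrt3$. The constraint $x+y\le1$, i.e.\ $u\le 2\lambda/(1+\lambda^2)$, does not exclude these $\lambda$ when $u\le 1/2$, so your $u_*=\sup a/b$ is infinite. Moreover, on this range $a-ub>0$ holds iff $u>a/b$, so your dichotomy ``$u<u_*$ wins, $u\ge u_*$ loses'' runs the wrong way.

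You must split $(0,1]$ into three pieces:
\begin{enumerate}[label=\textup{(\roman*)}]
\item On $(0,\sqrt2-1)$, both $a,b>0$, and the threshold is $\max a/b$ over this interval only.
\item On $[\sqrt2-1,1/\sqrt3)$, one has $a\le 0<b$, so the difference is negative for every $u>0$.
\item On $[1/\sqrt3,1]$, one has $b\le0$, so $a-ub\le a-\tfrac12 b$ for $u\le\tfrac12$. Here $a-\tfrac12b=\tfrac{\lambda-1}{2}\bigl(3\lambda^3+5\lambda^2-\lambda+1\bigr)\le 0$.
\end{enumerate}
Piece (iii) is exactly where the hypothesis $\beta\le 1/4$ enters, and it is sharp. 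Since $a/b\to 1/2$ as $\lambda\to1$, for $\beta$ slightly above $1/4$ points with $\lambda$ near $1$ beat $\beta^{3/2}(1-\sqrt\beta)$. Your plan never uses $\beta\le1/4$ in an essential way, which is the symptom of this missing case.

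\textbf{Smaller points.}
\begin{enumerate}[label=\textup{(\roman*)}]
\item The elimination yields a quintic with other real roots, for instance a negative one. To conclude $u_*=t_*$ you need an a priori bound $0<u_*<1/3$. The paper obtains this from $b-3a=1-4\lambda+10\lambda^2-3\lambda^4>0$ on $(0,\sqrt2-1)$.
\item Existence and uniqueness of the interior maximizer $\lambda_0$ needs an argument. The paper uses the quintic $H$ appearing in the derivative of $a/b$.
\item For the strict inequality when $u<t_*$, you must check that $\lambda_0$ is feasible. This follows from $2\lambda/(1+\lambda^2)>a/b$ on $(0,\sqrt2-1)$; checking feasibility ``for all $u\le 1/2$'', as you propose, is not what is needed.
\end{enumerate}
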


The proof of Proposition~\ref{lem:split-quasiclique-comparison} is rather technical and does not rely on the structure, so we defer it to the Appendix.

For the full upper profile of $S_{2,1}$,   they \cite{BLMPV} conjectured that 

\begin{conjecture}[Balogh--Lidick\'y--Mubayi--Pfender--Volec~\cite{BLMPV}]\label{conj:upper-profile}
 There exists $y \in (0,1/4)$ such that, for every $\beta\in[0,1]$,
\begin{align*}
        I(S_{2,1},\beta)=
        \begin{cases}
        s(\beta), & 0\le \beta\le y,\\[1mm]
        \beta^{3/2}(1-\sqrt{\beta}), & y\le \beta\le 1/4,\\[1mm]
        \beta/4, & 1/4\le \beta\le 1/2,\\[1mm]
        \beta^2(1-\beta), & 1/2\le \beta\le 1. 
        \end{cases}
\end{align*}

\end{conjecture}

Our first result proves the remaining low-density part of Conjecture~\ref{conj:upper-profile}. Together with Theorem~\ref{thm:dense-upper-profile}  gives the full upper profile of $S_{2,1}$.
\begin{theorem}\label{thm:upper-profile}
For $\beta\in[0,1/4]$, let $\beta_*=t_*^2$. Then 
\begin{align*}
        I(S_{2,1},\beta)=
        \begin{cases}
        s(\beta), & 0\le \beta\le \beta_*,\\[1mm]
        \beta^{3/2}(1-\sqrt{\beta}), & \beta_*\le \beta\le 1/4.
        \end{cases}
\end{align*}
\end{theorem}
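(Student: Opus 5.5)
Both inequalities are needed. For the lower bound I exhibit two constructions. The first is the quasi-clique: a clique on $\sqrt{\beta}\,n$ vertices together with isolated vertices. Each clique vertex has degree $\approx\sqrt{\beta}n$, so the count is $\beta^{3/2}(1-\sqrt\beta)n^4$. The second is the three-class family behind $s(\beta)$. Take a set $Y$ of size $yn$ inducing a clique, a set $X$ of size $xn$ that is independent and completely joined to $Y$, and a set $Z$ of the remaining vertices, which are isolated. The edge density is $2xy+y^2=\beta$. Vertices of $X$ have degree $yn$ and vertices of $Y$ have degree $(x+y)n$, which gives exactly the objective in $s(\beta)$. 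Maximizing over admissible $(x,y)$ shows $I\ge s(\beta)$. By Proposition~\ref{lem:split-quasiclique-comparison}, $\max\{s(\beta),\beta^{3/2}(1-\sqrt\beta)\}$ equals the claimed piecewise function, with switching point $\beta_*=t_*^2$. It therefore suffices to prove the upper bound
\[ I(S_{2,1},\beta)\le \max\{s(\beta),\beta^{3/2}(1-\sqrt\beta)\}\quad\text{for }\beta\le 1/4. \]

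For the upper bound I pass to the limit. By \eqref{EQ:number-S2,1-degree}, $p(S_{2,1},G)$ is asymptotically $\int f(d)\,d\mu$ with $f(d)=d^2(1-d)$, where $\mu$ is the normalized degree distribution on $[0,1]$. So we must maximize $\int f\,d\mu$ over degree distributions that are realizable (graphic) with $\int d\,d\mu=\beta$. The mean constraint alone is too weak, since putting all mass at degree $2/3$ is not allowed. Realizability is encoded by Erdős–Gallai type (threshold-majorization) constraints: for a fixed degree sequence one may assume the graph is a threshold graph. I plan a transfer/shifting step here. Replacing $G$ by its threshold shift does not change degrees, but we also want to move degree mass while keeping the edge count, never decreasing $\sum f(d(v))$. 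I would break ties by maximizing $\sum d(v)^2$, which yields a nested neighbourhood (threshold) structure. A threshold graphon is described by a nonincreasing boundary function, that is, a staircase. The problem then becomes an optimization over a monotone boundary $g$ with $\iint \mathbf 1[y\le g(x)]=\beta$ and objective $\int f(\text{deg}(x))\,dx$.

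Next I reduce the staircase to at most two steps. Because $f$ is convex on $[0,1/3]$ and concave on $[1/3,1]$, a Lagrangian/variational argument gives the following. At an optimum, the degrees taking values in the concave region can be merged into a single value (almost-regular part), while those in the convex region are pushed to extremes $0$ or a single level. Combining this with the threshold structure leaves finitely many step configurations. Their parameters are exactly $(x,y)$ of the three-class family, the degenerate case $x=0$ being the quasi-clique. The almost-regular alternative, in which all degrees equal $\beta$, gives $\beta^2(1-\beta)$. This is at most $\beta^{3/2}(1-\sqrt\beta)$ for $\beta\le1/4$, since $\sqrt\beta(1-\beta)\le 1-\sqrt\beta$ holds whenever $\sqrt\beta\le 1/2$ (it is $\sqrt\beta(1+\sqrt\beta)\le1$). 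The dense-regime configurations with more steps, which give $\beta/4$, would also have to be excluded. These are only optimal for $\beta\ge1/4$, and I would check them by direct comparison.

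The main obstacle is the finite-staircase reduction. One has to show rigorously that a general monotone boundary can be improved to at most two nontrivial levels without violating the density constraint. The difficulty is that moving degree mass in a threshold graph changes several degrees simultaneously: raising the boundary on one column increases degrees in the matching rows. I would first attempt a two-column exchange. This means perturbing the boundary on two intervals with opposite signs to preserve the area, then computing the first variation $\int (f'(\deg)\cdot \text{change in degree})$ and showing that it is nonnegative in some direction unless the staircase has the claimed form. The degree-square tie-breaking handles the flat directions. Once this reduction holds, the remaining finite-dimensional comparison is exactly Proposition~\ref{lem:split-quasiclique-comparison}.
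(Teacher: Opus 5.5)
Your lower-bound constructions are fine, and so is the reduction to proving $I(S_{2,1},\beta)\le s(\beta)$; note that $s(\beta)\ge\beta^{3/2}(1-\sqrt\beta)$ always, since $(x,y)=(0,\sqrt\beta)$ is feasible. Your toolkit (edge transfers with $\sum_v d(v)^2$ tie-breaking, and a convex/concave split at degree level $1/3$) is also the right one. The upper bound, however, has two genuine gaps.

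The first gap is the structural reduction. It is false that for a fixed degree sequence one may assume the graph is threshold. A threshold graph is determined by its degree sequence, and a regular graph that is neither empty nor complete is not threshold. Nor does the tie-broken transfer force nested neighbourhoods everywhere. Moving $uw$ to $vw$ with $d(u)\le d(v)$ changes the count by $(d(u)-d(v)-1)(3d(u)+3d(v)-2n)$. So domination is forced only when $d(u)+d(v)\le 2n/3$; when $d(u)+d(v)>2n/3$ you only learn $|d(u)-d(v)|\le1$. What actually comes out, after some case analysis, is a dichotomy: the chosen extremal graph is either threshold or a near-regular core on $\sigma n$ vertices plus isolated vertices. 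Your treatment of the second branch covers only $\sigma=1$. You need $\frac{\beta^2}{\sigma}\bigl(1-\frac{\beta}{\sigma}\bigr)\le\beta^{3/2}(1-\sqrt\beta)$ for every $\sigma\in[\sqrt\beta,1]$. This holds because the function is decreasing for $\sigma\ge2\beta$, and this is exactly where $\beta\le1/4$ enters.

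The second and more serious gap is the threshold branch. You assert that the staircase optimization leaves exactly the split family, but you do not prove it, and your merging/pushing heuristic does not imply it. Write a threshold graph as a clique $K$ of size $zn$ plus an independent tail whose neighbourhoods in $K$ are nested, of sizes $u_1n\ge u_2n\ge\cdots$ with $u_i\le z$. When $z<1/3$ the count is convex in $(u_i)$. But the extreme points of this capped monotone simplex have three levels $z,h,0$, not two, so you are left with a four-parameter family rather than the split family.

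Concretely, take $z<1/3$, a clique of size $zn$, and $\eta n$ independent vertices all joined to the same $hn$-subset of $K$, with $0<h<z$. Its density is $\eta h^2(1-h)+h(z+\eta)^2(1-z-\eta)+(z-h)z^2(1-z)$ at edge density $z^2+2h\eta$. Its tail degrees sit at a single convex-region level, so it is compatible with every principle you invoke, yet it is not a split construction. Showing that it never beats the split graph with the same clique and $x=h\eta/z$ is the heart of the paper's argument. That takes convexity along the fibres of the density constraint (a second-order fact) to push to faces. It then takes a Lagrange-multiplier analysis on the remaining mixed face, closed by an explicit polynomial identity. A first-variation two-column exchange only locates critical points and cannot exclude such competitors, so as sketched the argument does not close.

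The case $z\ge1/3$ is easier: the marginal weights of tail positions are decreasing, and an exchange pushes the tail to $(z,\dots,z,\theta,0,\dots,0)$. Also, no separate exclusion of ``$\beta/4$'' configurations is needed once the threshold branch is handled directly for $\beta\le1/4$.
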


We now turn to the lower profile. Balogh, Lidick\'y, Mubayi, Pfender and
Volec~\cite{BLMPV}  also conjectured that the minimum is attained by the two endpoint
constructions, namely the quasi-star and the quasi-clique (detailed definition can be found in \cite{BLMPV}).

\begin{conjecture}[Balogh--Lidick\'y--Mubayi--Pfender--Volec~\cite{BLMPV}]\label{conj:lower-profile}
For every $\beta\in[0,1]$,
\begin{align*}
        i(S_{2,1},\beta)=
        \begin{cases}
        (1-\beta)(1-\sqrt{1-\beta})^2, & 0\le \beta\le x,\\[1mm]
        \beta^{3/2}(1-\sqrt{\beta}), & x\le \beta\le 1,
        \end{cases}
\end{align*}
where $x\in(0,1)$ is a solution to $(1-\beta)(1-\sqrt{1-\beta})^2= \beta^{3/2}(1-\sqrt{\beta})$, i.e. $16x^3-40x^2+41x-16=0$.
\end{conjecture}
Our second result shows that this endpoint formula is not always correct.
The extra construction has three vertex classes $A,B,C$.  The class
$A\cup B$ is a clique, $A$ is complete to $C$, $B$ is
anti-complete to $C$, and $C$ is independent.  Put
\begin{align*}
        |A|=hn+o(n),\qquad |A\cup B|=zn+o(n),\qquad
        |C|=(1-z)n+o(n).
\end{align*}
This construction has edge density
$
        z^2+2h(1-z),
$
and normalized $S_{2,1}$-density
\begin{align*}
        (z-h)z^2(1-z)+(1-z)h^2(1-h).
\end{align*}
Take $\beta=9/10$, $z=47/50$ and $h=41/300$. Then Conjecture~\ref{conj:lower-profile} gives $i(S_{2,1},\beta)\ge\min\{0.0467544,0.043815\}$ while 
\begin{align*}
     (z-h)z^2(1-z)+(1-z)h^2(1-h)=\frac{19600663}{450000000}=0.043557<\min\{0.0467544,0.043815\}.
\end{align*}
For fixed $\beta$, the above construction gives $h\le z$ and thus $z^2\le \beta\le z^2+2z(1-z)$. Define
\begin{align}\label{eq:Jbeta-hbeta}
      J_\beta\coloneqq[\,1-\sqrt{1-\beta},\,\sqrt{\beta}\,],
        \qquad
        h_\beta(z)\coloneqq\frac{\beta-z^2}{2(1-z)},
\end{align}
and
\begin{align}\label{eq:Cbeta}
     C_\beta(z)\coloneqq
        (z-h_\beta(z))z^2(1-z)
        +(1-z)h_\beta(z)^2(1-h_\beta(z)).
\end{align}
At $\beta=1$, we set $J_1=\{1\}$ and use the continuous convention
$C_1(1)=0$. We prove that 

\begin{theorem}\label{thm:lower}
For every $\beta\in[0,1]$, we have 
\begin{align*}
  i(S_{2,1},\beta)=\min_{z\in J_\beta} C_\beta(z).
\end{align*}
\end{theorem}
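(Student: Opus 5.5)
The upper bound $i(S_{2,1},\beta)\le\min_{z\in J_\beta}C_\beta(z)$ comes directly from the three-class construction described before Theorem~\ref{thm:lower}. For $z\in J_\beta$ we have $z^2\le\beta\le z^2+2z(1-z)$, so $h_\beta(z)\in[0,z]$. Blowing up with $|A|=h_\beta(z)n$, $|A\cup B|=zn$ and $|C|=(1-z)n$ gives edge density $\beta+o(1)$. Counting through \eqref{EQ:number-S2,1-degree} by the degree of each class gives the value $C_\beta(z)+o(1)$: vertices of $A$ have degree $\approx n$, vertices of $B$ have degree $zn$, and vertices of $C$ have degree $hn$. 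Note that vertices of $A$ have normalized degree $1$ and so contribute nothing. Compactness of $J_\beta$ and continuity of $C_\beta$ then give the upper bound. At the endpoints the family specializes to the quasi-clique ($z=\sqrt\beta$, $h=0$) and the quasi-star ($z=1-\sqrt{1-\beta}$, $h=z$), which explains why Conjecture~\ref{conj:lower-profile} appears as a sub-case.

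For the lower bound, I pass to graphons $W$ with $\int W=\beta$. By \eqref{EQ:number-S2,1-degree} the normalized density is $\int_0^1 f(d(x))\,dx$ with $f(t)=t^2(1-t)$, where $d$ is the degree function. The difficulty is that $f$ is neither convex nor concave: it is convex on $[0,1/3]$ and concave on $[1/3,1]$. We must therefore understand which degree distributions are graphical, not only their mean. The plan is a transfer (symmetrization) argument. Take a minimizer $W$, which exists by compactness. Break ties among minimizers by maximizing $\int d(x)^2\,dx$, the degree-square tie-breaking mentioned in the abstract. Then show $W$ can be taken to be a threshold graphon, i.e.\ $W(x,y)=1$ iff $x+y$ lies in a suitable down-set after ordering by degree.

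The transfer step is as follows. If $d(u)\ge d(v)$ and some $w$ is adjacent to $v$ but not $u$, move the mass from $vw$ to $uw$. This changes $d(u)$ and $d(v)$ by $\pm\varepsilon$ and leaves $d(w)$ fixed. The first-order change in the objective is $\varepsilon(f'(d(u))-f'(d(v)))$, and the change in $\int d^2$ is positive. So whenever $f'(d(u))\le f'(d(v))$ the move does not increase the objective and strictly increases the tie-breaker. This forces a structure on pairs of degrees at which $f'$ is increasing in the wrong direction. Because $f'(t)=2t-3t^2$ is increasing on $[0,1/3]$ and decreasing on $[1/3,1]$, the move is always favourable between two vertices on the same side of a suitable level pair. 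The upshot I aim for is that the degree function takes few values. Specifically, the conclusion should be that $W$ is a finite threshold (staircase) graphon, with all nested-neighbourhood steps occurring in a bounded number of blocks, possibly combined with an almost-regular part. Almost-regular parts are excluded or handled by Jensen on the convex region $d\le 1/3$.

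The remaining finite-dimensional optimization is over staircase threshold graphons with a few classes. By Lagrange multipliers in the class sizes, I would reduce this to the three-class complement-split family $(A,B,C)$, then optimize over $z$; this gives exactly $\min_{z\in J_\beta}C_\beta(z)$. The main obstacle is the structural step: in the region $d>1/3$ the transfer argument gives concavity information that merges classes only in one direction, so one must rule out staircases with more steps. I would attempt this by showing that two consecutive steps in the staircase can be merged or split with a first-order non-increase, via a two-parameter perturbation preserving density. If needed, I would handle the residual cases by explicit polynomial inequalities in the class parameters.
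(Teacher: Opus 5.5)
Your upper-bound construction is correct and is the paper's. The lower bound has a genuine gap, starting with the structural step, which does not give what you claim. The exact identity $N(S_{2,1},G')-N(S_{2,1},G)=(d(u)-d(v)-1)(3d(u)+3d(v)-2n)$ shows that the right criterion is $d(u)+d(v)$ versus $2n/3$, not $f'(d(u))\le f'(d(v))$: when $d(u)=d(v)$ and $d(u)+d(v)<2n/3$, your move strictly increases the count. Both directions of the move together give two rules. First, $|d(u)-d(v)|\le 1$ whenever $d(u)+d(v)<2n/3$. Second, with the tie-breaker $\sum_v d(v)^2$, domination holds whenever $d(u)+d(v)\ge 2n/3$. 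These yield only a dichotomy: with $S=\{v:d(v)\le\delta+1\}$ and $L=V\setminus S$, either $|L|\le\delta+1$ or $G$ is threshold. Pairs with $d(u)+d(v)\ge 2n/3$ are constrained only by nesting, so nothing bounds the number of twin classes (a graphon limit may be a continuous staircase). The other branch is also not ``an almost-regular part below $1/3$''. The minimum degree $\alpha n$ may exceed $n/3$, and up to $\alpha n$ vertices of $L$ have degree at least $(2/3-\alpha)n$, so Jensen on $[0,1/3]$ is unavailable. The paper closes this branch with three ingredients: the chord bound $\sigma(G)\le\beta-\alpha(1-\beta)$; the estimate $\alpha\ge 1-\sqrt{1-\beta}-o(1)$, which follows from $|L|\le\delta+1$; and Cauchy--Schwarz in the form $\sum_v d(v)^2(n-d(v))\ge\bigl(\sum_v d(v)(n-d(v))\bigr)^2/\sum_v(n-d(v))$. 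Together these give $\cc(\beta)\ge M(\beta)$.

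Second, you leave the staircase optimization open, and this is the bulk of the paper. One must prove $F_r(\mathbf x)\ge M(\beta_r(\mathbf x))$ for weak staircase patterns with arbitrary $r$, and then apply it to each $G_n$ with $r=r_n$. Lagrange multipliers give only the balance equations $A(x_{r-j},x_{r+1-j})+\psi'(x_j)=\Lambda$, which do not exclude critical points with many classes. A merge or split move with first-order non-increase cannot work either, because every density-preserving first variation vanishes at an interior constrained critical point. The missing ideas are second-order and boundary arguments. The paper takes a minimal counterexample to $F_r-M(\beta_r)\ge 0$ on the compact pattern set and removes zero gaps by endpoint deletion, which preserves $\beta_r$ and $F_r$; the counterexample is then a strict constrained local minimum. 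If $x_r=1$ and $r\ge 4$, the balance equations place suitable levels above $1/3$, and explicit density-preserving curves along which $F_r$ is strictly concave give a contradiction. If $x_r<1$ and $r\ge 4$, the second variation in $(x_1,x_r)$ with $x_1(x_r-x_{r-1})$ fixed, together with the $j=1,r$ balance, forces $x_r<1/2$; Cauchy--Schwarz then gives $F_r>\beta_r^2>\cc(\beta_r)$. The cases $r\le 3$ with $x_r<1$ need explicit polynomial comparisons (the two- and three-gap lemmas) against $\cc(\beta)$, $c(\beta)$, $C_\beta(x_2)$ and the two-gap value. Without these ingredients, neither the reduction to the three-class family nor the lower bound is established.
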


The rest of the paper is organized as follows. In Section~\ref{sec:prep}, we
collect several useful lemmas that will be used in the proofs. We prove
Theorem~\ref{thm:upper-profile} in Section~\ref{sec:upper} and
Theorem~\ref{thm:lower} in Section~\ref{sec:lower}.

\section{Preliminary}\label{sec:prep}

In this section we collect some elementary facts used throughout the paper.

\begin{fact}\cite[Theorem 9.28]{Rudin}\label{fact:ift}
Let \(U\subseteq \mathbb{R}^{2}\) be an open neighbourhood of
\((0,0)\), and let \(F:U\to\mathbb{R}\) be continuously differentiable.
Assume that
\begin{align*}
        F(0,0)=0
        \quad\text{and}\quad
         \frac{\partial F}{\partial y}(0,0)\neq 0 .
\end{align*}
Then there exist \(\varepsilon>0\), \(\delta>0\), and a unique
continuously differentiable function $\eta:(-\varepsilon,\varepsilon)\to (-\delta,\delta)$
such that
\begin{align*}
        \eta(0)=0
        \quad\text{and}\quad
        F(x,\eta(x))=0
        \quad\text{for every } |x|<\varepsilon .
\end{align*}
Moreover, after possibly decreasing \(\varepsilon\) and \(\delta\), every
solution \((x,y)\in(-\varepsilon,\varepsilon)\times(-\delta,\delta)\) of
\(F(x,y)=0\) is of the form \(y=\eta(x)\). Finally,
\begin{align*}
\eta'(x)
        =
        -\frac{\partial F}{\partial x}(x,\eta(x)){\huge /}\frac{\partial F}{\partial y}(x,\eta(x))
        \quad\text{for every } |x|<\varepsilon.
\end{align*}
\end{fact}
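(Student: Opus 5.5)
The statement to prove is Fact~\ref{fact:ift}, the implicit function theorem in two variables. We cite it from Rudin, but here is how we would prove it directly.

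Assume without loss of generality $\partial_y F(0,0)>0$. By continuity of $\partial_y F$, choose $\delta>0$ such that the closed square $[-\delta,\delta]^2\subset U$ and $\partial_y F\ge c>0$ on it. Then for each fixed $x$ with $|x|\le\delta$, the map $y\mapsto F(x,y)$ is strictly increasing on $[-\delta,\delta]$. In particular $F(0,-\delta)<0<F(0,\delta)$.

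By continuity of $F$ in $x$, choose $\varepsilon\in(0,\delta)$ with $F(x,-\delta)<0<F(x,\delta)$ for all $|x|<\varepsilon$. The intermediate value theorem together with strict monotonicity then yields, for each such $x$, a unique $\eta(x)\in(-\delta,\delta)$ with $F(x,\eta(x))=0$, and $\eta(0)=0$. This construction simultaneously gives the local uniqueness clause: every zero of $F$ in $(-\varepsilon,\varepsilon)\times(-\delta,\delta)$ is of the form $(x,\eta(x))$.

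Next we would establish continuity of $\eta$. Fix $x_0$ and $\delta'>0$ small, and apply the same monotonicity argument on the smaller interval $[\eta(x_0)-\delta',\eta(x_0)+\delta']$. By continuity of $F$, for all $x$ near $x_0$ the function $F(x,\cdot)$ changes sign inside this interval, so the unique zero $\eta(x)$ lies within $\delta'$ of $\eta(x_0)$.

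The main step is differentiability. Apply the mean value theorem to $F$ along the segment from $(x_0,\eta(x_0))$ to $(x,\eta(x))$. Since $F$ vanishes at both endpoints,
\[
0=\partial_x F(\xi)\,(x-x_0)+\partial_y F(\xi)\,(\eta(x)-\eta(x_0))
\]
for some point $\xi$ on that segment. Dividing by $(x-x_0)\,\partial_y F(\xi)$, which is legitimate because $\partial_y F\ge c>0$, and letting $x\to x_0$, we get $\xi\to(x_0,\eta(x_0))$ by the continuity of $\eta$ already proved. Continuity of the partial derivatives then gives $\eta'(x_0)=-\partial_xF/\partial_yF$ evaluated at $(x_0,\eta(x_0))$. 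Because the right-hand side is a composition of continuous functions, $\eta'$ is continuous, so $\eta$ is $C^1$.

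The only delicate point is the order of the argument. Continuity of $\eta$ must be secured before the mean value step, since the convergence $\xi\to(x_0,\eta(x_0))$ relies on it.
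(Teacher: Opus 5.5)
Your proof is correct. The paper gives no proof of this fact; it cites Rudin's Theorem 9.28, the general implicit function theorem for systems $f(x,y)=0$ with $x\in\mathbb{R}^n$, $y\in\mathbb{R}^m$. Rudin deduces that theorem from the inverse function theorem, and hence ultimately from the contraction mapping principle.

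You instead use the fact that there is one equation in one scalar unknown. A uniform bound $\partial F/\partial y\ge c>0$ on a square makes each $F(x,\cdot)$ strictly increasing. Existence and uniqueness of $\eta(x)$ then follow from the intermediate value theorem, and the local-uniqueness clause comes for free, without shrinking $\varepsilon,\delta$. The derivative formula follows from the mean value theorem along a segment.

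Your route gives a short, self-contained argument with explicit neighbourhoods. That is all the paper needs, since its only use of the fact (in Lemma~\ref{lem:first-variation}) is to solve a single scalar constraint $H(s,\eta)=0$. What it gives up is generality: the monotonicity and intermediate-value step has no analogue for several unknowns, where the contraction argument is genuinely needed.

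One simplification: the ordering you flag as delicate is not needed. Your mean value identity already gives $|\eta(x)-\eta(x_0)|\le c^{-1}\sup_{[-\delta,\delta]^2}|\partial F/\partial x|\,|x-x_0|$. So $\eta$ is Lipschitz, and the separate continuity argument can be dropped.
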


Put 
\begin{align*}
    c(\beta) \coloneqq \beta^{3/2}(1 -\sqrt \beta),\quad 
cc(\beta) \coloneqq (1 -\beta)(1-\sqrt{1-\beta})^2.
\end{align*}
Recall the notation  given by  \eqref{eq:Jbeta-hbeta} and \eqref{eq:Cbeta}. We have 
\begin{fact}\label{fact:qs-endpoint}
Let
$0\le\beta<1$ and put $z_\beta\coloneqq1-\sqrt{1-\beta}$.  Then 
\begin{align*}
  z_\beta\in J_\beta, \quad h_\beta(z_\beta)=z_\beta,
  \qquad
  C_\beta(z_\beta)=\cc(\beta).
\end{align*}
Consequently, for every $0\le\beta\le1$, we have
\begin{align}\label{eq:M-le-cc}
  \min_{z\in J_\beta} C_\beta(z)\le\cc(\beta).
\end{align}
\end{fact}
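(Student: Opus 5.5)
The plan is a direct computation at $z=z_\beta$, followed by the observation that $\cc(\beta)$ is realized as a particular value of $C_\beta$.

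\textbf{Membership in $J_\beta$.} Write $s=\sqrt{1-\beta}\in(0,1]$, so that $z_\beta=1-s$ and $\beta=1-s^2$. The left endpoint of $J_\beta$ is $z_\beta$ itself. To see that $J_\beta$ is nonempty, so that $z_\beta\le\sqrt{\beta}$, note that squaring reduces this to $(1-s)^2\le 1-s^2$. This is equivalent to $2s^2-2s\le 0$, which holds for $s\in[0,1]$.

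\textbf{The value $h_\beta(z_\beta)$.} Compute $1-z_\beta=s$ and
$$\beta-z_\beta^2=1-s^2-(1-s)^2=2s-2s^2=2s(1-s).$$
Hence $h_\beta(z_\beta)=\frac{2s(1-s)}{2s}=1-s=z_\beta$. Division by $1-z_\beta=s$ is legitimate because $\beta<1$ forces $s>0$.

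\textbf{The value $C_\beta(z_\beta)$.} Substitute $h=z=z_\beta$ into \eqref{eq:Cbeta}. The first term vanishes because it carries the factor $z-h=0$. The second term is
$$(1-z)z^2(1-z)=s^2(1-s)^2=(1-\beta)\bigl(1-\sqrt{1-\beta}\bigr)^2=\cc(\beta).$$

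\textbf{Consequence \eqref{eq:M-le-cc}.} For $\beta<1$, we have shown $z_\beta\in J_\beta$, so
$$\min_{z\in J_\beta}C_\beta(z)\le C_\beta(z_\beta)=\cc(\beta).$$
The minimum exists because $C_\beta$ is continuous on the compact interval $J_\beta$ (the denominator $1-z$ is positive there, since $\sqrt\beta<1$). For $\beta=1$, we have $J_1=\{1\}$, and the convention gives $C_1(1)=0=\cc(1)$, so equality holds.

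There is no real obstacle in this argument. The only points requiring care are the division by $1-z$, which needs $\beta<1$, and treating the endpoint case $\beta=1$ separately through the stated convention.
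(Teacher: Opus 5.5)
Your proof is correct and matches the paper's argument: substitute $z_\beta$, use $\beta-z_\beta^2=2s(1-s)$ (the paper writes this as $\beta=2z_\beta-z_\beta^2$) to get $h_\beta(z_\beta)=z_\beta$, observe that the first term of $C_\beta$ vanishes, and handle $\beta=1$ through the stated convention. You also check explicitly that $z_\beta\le\sqrt\beta$, so $J_\beta$ is a genuine interval, and that the minimum exists; the paper leaves both of these implicit.
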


\begin{proof}
First note that $0\le z_\beta<1$, since $0\le\beta<1$.  Moreover
\begin{align}\label{EQ:beta=z-beta-Fact}
  \beta=1-(1-z_\beta)^2=2z_\beta-z_\beta^2.
\end{align}
By the definition of $J_\beta$,
\begin{align*}
  z_\beta=1-\sqrt{1-\beta}\in
  [\,1-\sqrt{1-\beta},\,\sqrt\beta\,]=J_\beta .
\end{align*}
By \eqref{EQ:beta=z-beta-Fact}, we obtain
\begin{align*}
  h_\beta(z_\beta)
  &=\frac{\beta-z_\beta^2}{2(1-z_\beta)}
    =\frac{2z_\beta-z_\beta^2-z_\beta^2}{2(1-z_\beta)}
    =\frac{2z_\beta(1-z_\beta)}{2(1-z_\beta)}
    =z_\beta.
\end{align*}
Substituting it into \eqref{eq:Cbeta}, the first
term vanishes and the second term gives
\begin{align*}
  C_\beta(z_\beta)
  &=(z_\beta-z_\beta)z_\beta^2(1-z_\beta)
    +(1-z_\beta)z_\beta^2(1-z_\beta)\\
  &=z_\beta^2(1-z_\beta)^2
  =(1-\sqrt{1-\beta})^2(1-\beta)
  =\cc(\beta).
\end{align*}
Taking the minimum over $J_\beta$ gives \eqref{eq:M-le-cc} for
$0\le\beta<1$.  At $\beta=1$, both sides are $0$ by the convention
$J_1=\{1\}$ and $C_1(1)=0$.
\end{proof}

\begin{fact}\label{fact:cc-square-bound}
For every $0\le\beta\le1$,
$
  \cc(\beta)\le \beta^2/4.
$
\end{fact}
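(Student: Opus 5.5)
We must show $(1-\beta)(1-\sqrt{1-\beta})^2\le \beta^2/4$. The plan is to substitute $s\coloneqq\sqrt{1-\beta}\in[0,1]$, so that $\beta=1-s^2=(1-s)(1+s)$. The left-hand side then becomes $s^2(1-s)^2$, and the right-hand side becomes $(1-s)^2(1+s)^2/4$. Since both sides are nonnegative and share the factor $(1-s)^2$, the inequality reduces to
\begin{align*}
  s^2\le \frac{(1+s)^2}{4},
\end{align*}
that is, $2s\le 1+s$. This holds because $s\le 1$.

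The case $s=1$, i.e.\ $\beta=0$, gives $0\le 0$. The case $s=0$, i.e.\ $\beta=1$, gives $0\le 1/4$. Both are already covered by the reduction, since we may either multiply through by $(1-s)^2\ge 0$ or handle the $(1-s)^2=0$ case directly.

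There is no real obstacle here. The only care needed is taking square roots of nonnegative quantities, which is legitimate because $s,1-s,1+s\ge0$. Equality holds exactly when $s=1$, i.e.\ $\beta=0$.
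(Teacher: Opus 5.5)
Your proof is correct and is essentially the paper's argument. The paper makes the same substitution $s=\sqrt{1-\beta}$, writes $\cc(\beta)=s^2(1-s)^2=\beta^2\bigl(s/(1+s)\bigr)^2$, and concludes from $2s\le 1+s$, which is the same reduction as cancelling the common factor $(1-s)^2$.
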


\begin{proof}
Put $s=\sqrt{1-\beta}$.  Then $0\le s\le1$ and $ \beta=(1-s)(1+s).$ The change of variables gives
\begin{align*}
  \cc(\beta)
  =s^2(1-s)^2
  =\beta^2\left(\frac{s}{1+s}\right)^2.
\end{align*}
It follows from $0\le s\le1$ that $2s\le1+s$, which implies  $s/(1+s)\le1/2$. Thus, $\cc(\beta)\le \beta^2/4$. 
\end{proof}

\begin{fact}\label{fact:qc-endpoint}
For every $0\le\beta\le1$, we have 
\begin{align*}
  \min_{z\in J_\beta} C_\beta(z)\le C_\beta(\sqrt\beta)=c(\beta).
\end{align*}
\end{fact}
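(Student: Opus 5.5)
The plan mirrors the proof of Fact~\ref{fact:qs-endpoint}: check that $\sqrt\beta$ lies in $J_\beta$, compute $h_\beta(\sqrt\beta)$, and substitute into \eqref{eq:Cbeta}. Take first $0\le\beta<1$. To see that $\sqrt\beta\in J_\beta$ we need $1-\sqrt{1-\beta}\le\sqrt\beta$, that is, $1\le\sqrt\beta+\sqrt{1-\beta}$. Squaring turns this into $1\le 1+2\sqrt{\beta(1-\beta)}$, which always holds, so $J_\beta$ is a nonempty interval whose right endpoint is $\sqrt\beta$.

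Next put $z=\sqrt\beta<1$. Then $\beta-z^2=0$, so by \eqref{eq:Jbeta-hbeta} we get $h_\beta(\sqrt\beta)=0$ and the denominator $2(1-z)$ is nonzero. In \eqref{eq:Cbeta} the second term $(1-z)h^2(1-h)$ vanishes, leaving
\begin{align*}
C_\beta(\sqrt\beta)=(\sqrt\beta-0)\,\beta\,(1-\sqrt\beta)=\beta^{3/2}(1-\sqrt\beta)=c(\beta).
\end{align*}
Since $\sqrt\beta\in J_\beta$, taking the minimum over $J_\beta$ gives the stated inequality. In terms of the construction, this point is the quasi-clique: $A=\emptyset$, and the clique $B$ has size $\sqrt\beta\, n$.

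At $\beta=1$, by the stated convention $J_1=\{1\}=\{\sqrt1\}$ and $C_1(1)=0=c(1)$, so the claim holds with equality. There is no real obstacle in this argument. The only points needing care are the endpoint $\beta=1$, where $h_\beta$ is undefined and the convention must be used, and confirming that the interval $J_\beta$ is nonempty, which the squaring step above takes care of.
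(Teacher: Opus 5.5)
Your proposal is correct and follows the paper's proof: you substitute $z=\sqrt\beta$, note that $h_\beta(\sqrt\beta)=0$, so that $C_\beta(\sqrt\beta)=\beta^{3/2}(1-\sqrt\beta)=c(\beta)$, and handle $\beta=1$ by the stated convention. Your explicit check that $1-\sqrt{1-\beta}\le\sqrt\beta$, which gives $\sqrt\beta\in J_\beta$, fills in a step the paper only asserts.
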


\begin{proof}
For $0\le\beta<1$, put $z=\sqrt\beta$.  Then $z\in J_\beta$ and, by
\eqref{eq:Jbeta-hbeta},
\begin{align*}
  h_\beta(z)=\frac{\beta-z^2}{2(1-z)}=0.
\end{align*}
Substituting this into \eqref{eq:Cbeta} gives
\begin{align*}
  C_\beta(z)=zz^2(1-z)=z^3(1-z)=\beta^{3/2}(1-\sqrt\beta)=c(\beta).
\end{align*}
At $\beta=1$, both sides are $0$ by the convention $J_1=\{1\}$ and
$C_1(1)=0$.
\end{proof}

For an integer $k$, define 
\begin{align}\label{eq:Mndef}
 \Phi_n(k)\coloneqq k(k-1)(n-1-k), \text{ and }  M_n(k)\coloneqq\Phi_n(k+1)-\Phi_n(k)=-3k^2+(2n-3)k.
\end{align}
For a graph $G$, let 
\begin{align*}
\normsq{G}=\sum_{v\in V(G)}d^2(v). 
\end{align*}
By \eqref{EQ:number-S2,1-degree}, we have 
\begin{align}\label{EQ:number-S2,1-Phi-degree}
N(S_{2,1},G)=\sum_{v\in V(G)} \Phi_n(d(v)).
\end{align}

The next lemma describes how $N(S_{2,1},G)$  and $\normsq{G}$ change when we modify a single edge in $G$.

\begin{lemma}\label{lem:transfer}
Let $u,v,w$ be three distinct vertices of $G$ with $uw\in E(G)$ and $vw\notin E(G)$.  Let
\begin{align*}
  G'\coloneqq G-uw+vw.
\end{align*}
Then
\begin{enumerate}[label=\textup{(\roman*)}]
\item $N(S_{2,1},G')-N(S_{2,1},G)=(d(u)-d(v)-1)(3d(u)+3d(v)-2n)$;
\item $\normsq{G'}-\normsq{G}=2(d(v)-d(u)+1)$.
\end{enumerate}
\end{lemma}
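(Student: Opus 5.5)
The plan is to use the degree formula \eqref{EQ:number-S2,1-Phi-degree}, $N(S_{2,1},G)=\sum_x\Phi_n(d(x))$, and track which degrees change. Passing from $G$ to $G'=G-uw+vw$ leaves the degree of $w$ unchanged, since it loses the neighbour $u$ and gains the neighbour $v$. The degree of $u$ drops by one and the degree of $v$ rises by one, and no other vertex is affected. Writing $a=d(u)$ and $b=d(v)$ for the degrees in $G$, both differences therefore involve only the two terms indexed by $u$ and $v$.

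For (i), the change in the count is
\begin{align*}
\Phi_n(a-1)-\Phi_n(a)+\Phi_n(b+1)-\Phi_n(b)=M_n(b)-M_n(a-1),
\end{align*}
using the definition \eqref{eq:Mndef} of $M_n$. Since $M_n(k)=-3k^2+(2n-3)k$, the difference factors as
\begin{align*}
M_n(b)-M_n(a-1)=(b-a+1)\bigl[-3(b+a-1)+(2n-3)\bigr]=(b-a+1)(2n-3a-3b).
\end{align*}
This equals $(a-b-1)(3a+3b-2n)$, which is the claimed expression.

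For (ii), the same bookkeeping applied to $\sum_x d(x)^2$ gives
\begin{align*}
(a-1)^2-a^2+(b+1)^2-b^2=-2a+1+2b+1=2(b-a+1).
\end{align*}

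The only point needing care is the observation that $w$'s degree is unchanged, which uses that $u$ and $v$ are distinct from $w$ and from each other. This is exactly where the hypotheses $uw\in E(G)$ and $vw\notin E(G)$ enter. Beyond that, the argument is a routine algebraic verification of the factorization of $M_n(b)-M_n(a-1)$, and I expect no real obstacle.
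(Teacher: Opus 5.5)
Your proposal is correct and follows essentially the same route as the paper: only $d(u)$ and $d(v)$ change, so the count changes by $M_n(d(v))-M_n(d(u)-1)$, which factors as claimed, and (ii) is the same direct computation. One small precision: the hypotheses $uw\in E(G)$ and $vw\notin E(G)$ are what make the degree changes exactly $\pm1$, while the distinctness of $u,v,w$ is what keeps $d(w)$ fixed.
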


\begin{proof}
Note that when going from $G$ to $G'$, $\deg(v)$ increases by one, $\deg(u)$ decreases by one, and for every other vertex $w$, $\deg(w)$ remains unchanged. By \eqref{EQ:number-S2,1-Phi-degree}, we have 
\begin{align*}
  N(S_{2,1},G')-N(S_{2,1},G)=-M_n(d(u)-1)+M_n(d(v)).
\end{align*}
Using \eqref{eq:Mndef}, we get
\begin{align*}
  -M_n(d(u)-1)=3(d(u)-1)^2-(2n-3)(d(u)-1)
  =3d(u)^2-(2n+3)d(u)+2n,
\end{align*}
and
\begin{align*}
  M_n(d(v))=-3d(v)^2+(2n-3)d(v).
\end{align*}
Hence
\begin{align*}
  -M_n(d(u)-1)+M_n(d(v))=(d(u)-d(v)-1)(3d(u)+3d(v)-2n).
\end{align*}
For (ii), a direct calculations gives 
\begin{align*}
\normsq{G'}-\normsq{G}=  (d(u)-1)^2+(d(v)+1)^2-d(u)^2-d(v)^2=2(d(v)-d(u)+1), 
\end{align*}
completing the proof. 
\end{proof}

\begin{definition}
Let $G$ be a graph and let $x,y\in V(G)$.  We say that $y$ \emph{dominates} $x$ if
\begin{align*}
  N_G(x)\setminus\{y\}\subseteq N_G(y)\setminus\{x\}.
\end{align*}
Two vertices $x,y$ are \emph{twins} if
\begin{align*}
  N_G(x)\setminus\{y\}=N_G(y)\setminus\{x\}.
\end{align*}
A \emph{twin class} is a maximal set of pairwise twin vertices.
A graph is called a \emph{threshold graph} if its vertices can be ordered $v_1,v_2,\ldots,v_n$ so that $v_i$ dominates $v_j$ whenever $i<j$.  
\end{definition}

The next observation follows immediately from the definition.

\begin{observation}\label{prop:threshold-equal-degree}
Let $G$ be a threshold graph, and let $u,v$ be distinct vertices.  If
$d(u)=d(v)$, then $u, v$ are twins. If
$d(u)<d(v)$, then
\begin{align*}
  N(u)\setminus\{v\}\subset N(v)\setminus\{u\}.
\end{align*}
\end{observation}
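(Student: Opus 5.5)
The statement to prove is Observation~\ref{prop:threshold-equal-degree}. I will prove it directly from the definition of a threshold graph, using one fact. If $y$ dominates $x$, then $d(x)\le d(y)$, and equality holds exactly when $x,y$ are twins.

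To see this, I will compare $N(x)\setminus\{y\}$ with $N(y)\setminus\{x\}$. Since the graph is simple, $x\notin N(x)$ and $y\notin N(y)$. Moreover $y\in N(x)$ if and only if $x\in N(y)$, because both say $xy\in E(G)$. Hence $|N(x)\setminus\{y\}|=d(x)-\ind_{xy\in E}$ and $|N(y)\setminus\{x\}|=d(y)-\ind_{xy\in E}$. The same correction term is subtracted on both sides. So the domination inclusion gives $d(x)\le d(y)$, and equality of degrees forces equality of these two finite sets. That is exactly the twin condition.

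Now fix the threshold ordering $v_1,\dots,v_n$ and let $u,v$ be distinct vertices. One of them comes earlier in the ordering, so one dominates the other.

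\emph{Case $d(u)=d(v)$.} Say $v$ dominates $u$ (the other case is symmetric). Since $d(u)=d(v)$, the fact above shows $u,v$ are twins.

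\emph{Case $d(u)<d(v)$.} If $u$ came earlier, $u$ would dominate $v$, and the fact would give $d(v)\le d(u)$, a contradiction. So $v$ precedes $u$, and therefore $v$ dominates $u$, i.e.\ $N(u)\setminus\{v\}\subseteq N(v)\setminus\{u\}$. The inclusion is proper: equality would make the two set sizes equal, which forces $d(u)=d(v)$ by the size identity above.

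There is no real obstacle. The only point needing care is that the symmetric treatment of the pair $\{x,y\}$ subtracts the same indicator from both degrees, so set inclusion translates cleanly into a degree inequality.
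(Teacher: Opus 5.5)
Your proof is correct and is exactly the direct unpacking of the definition that the paper leaves to the reader; the paper only remarks that the observation ``follows immediately from the definition.'' The two points you use are that any two distinct vertices are comparable under the threshold ordering, and that the same indicator $\ind_{\{uv\in E(G)\}}$ is removed from both neighbourhoods, so domination gives $d(u)\le d(v)$ with equality exactly for twins. This also justifies your extra claim that the inclusion is proper when $d(u)<d(v)$.
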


\begin{lemma}\label{prop:threshold-classes}
    Let $G$ be a finite threshold graph.  Let
    \begin{align*}
      I(G)\coloneqq\{v\in V(G):d(v)=0\},\qquad W\coloneqq V(G)\setminus I(G).
    \end{align*}
    Partition $W$ into maximal twin classes $V_1,V_2,\ldots,V_r$, and order them so that their common degrees are  strictly decreasing. 
    Then the following statements hold:
    \begin{enumerate}[label=\textup{(\roman*)}]
        \item each $G[V_i]$ is either a clique or an independent set;
        \item for $i\ne j$, the classes $V_i,V_j$ are either complete or anti-complete to each other;
        \item for $i\ne j$, the classes $V_i$ and $V_j$ are complete to each other if and only if $i+j\le r+1$; moreover, in the degree bookkeeping below, $V_i$ contributes its own class exactly when $2i\le r+1$;
        \item if $s_i\coloneqq|V_i|$, $S_i\coloneqq s_1+\cdots+s_i$ and $S_0\coloneqq 0$, then every $v\in V_i$ has degree
        \begin{align}\label{eq:threshold-degree}
          d(v)=S_{r+1-i}-\ind_{\{2i\le r+1\}}.
        \end{align}
    \end{enumerate}
\end{lemma}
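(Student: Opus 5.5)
We prove the lemma by working with the domination order among the vertices of $W$; all four items then come out of counting. Fix a threshold order $v_1,\dots,v_n$ in which $v_i$ dominates $v_j$ for $i<j$. By Observation~\ref{prop:threshold-equal-degree}, vertices of equal degree are twins, and a vertex of strictly smaller degree is strictly dominated. So the twin classes are exactly the degree classes, and the classes of $W$ are totally ordered by domination. We index them $V_1,\dots,V_r$ so that their degrees strictly decrease, as in the statement.

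For (i), take $x,y,z\in V_i$ with $xy\in E(G)$. Then $x\in N(y)\setminus\{z\}=N(z)\setminus\{y\}$, so $xz\in E(G)$. Hence adjacency within a twin class is all-or-nothing, and $G[V_i]$ is a clique or an independent set. For (ii), take $x\in V_i$ and $y,y'\in V_j$ with $xy\in E(G)$. Since $y,y'$ are twins and $x\ne y,y'$, we get $xy'\in E(G)$. The same argument with the roles of the classes swapped shows that $V_i,V_j$ are complete or anti-complete.

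Item (iii) is the structural heart of the lemma. Define an adjacency matrix $a_{ij}\in\{0,1\}$ on classes, with the diagonal entry recording whether $V_i$ is a clique. Domination says that for $i<j$ the row of $V_i$ contains the row of $V_j$, apart from the entries at $i$ and $j$. So off the diagonal, $a_{ij}$ is monotone non-increasing in each index. Consequently, for each $i$ the set of $j\ne i$ with $a_{ij}=1$ is an initial segment $\{1,\dots,t_i\}\setminus\{i\}$, and $t_i$ is non-increasing in $i$.

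Next we use maximality of the classes, i.e.\ strictly decreasing degrees and the absence of isolated vertices in $W$. Consecutive classes $V_i,V_{i+1}$ must differ in their neighbourhoods outside themselves. This forces the staircase to drop exactly once per index, so the boundary is the anti-diagonal $i+j=r+1$. Concretely, we argue by induction on $r$. The top class $V_1$ has the largest degree, and it must be adjacent to every vertex of $W$: if it missed some $V_j$, then $V_j$ would be isolated, since everything else is dominated by $V_1$. Similarly, $V_r$ is adjacent only to $V_1$, because otherwise we could merge classes or the degree order would break. Removing $V_1$ and $V_r$ leaves a threshold graph whose classes are $V_2,\dots,V_{r-1}$, and the induction applies to it.

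The diagonal is then forced. A class $V_i$ that is complete to $V_j$ exactly for $j\le r+1-i$ should contain itself in that initial segment precisely when $i\le r+1-i$, i.e.\ when $2i\le r+1$. Otherwise $V_i$ would be twin-equivalent to a neighbouring class, or its degree order would flip.

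This induction, and in particular the peeling step showing that $V_1$ is universal on $W$ and $V_r$ sees only $V_1$, is where I expect the care to be needed. The degenerate case $r+1=2i$, the middle class when $r$ is odd, needs a separate check: clique versus independent must be decided there by maximality. I would handle it by showing that if the middle class were independent it would share its neighbourhood with $V_{i+1}$ or $V_{i-1}$, contradicting maximality.

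Finally, (iv) follows by counting. A vertex $v\in V_i$ is adjacent to all of $V_1\cup\dots\cup V_{r+1-i}$, which has $S_{r+1-i}$ vertices. From this we subtract $1$ for $v$ itself exactly when $V_i$ lies in that union, i.e.\ when $i\le r+1-i$, i.e.\ $2i\le r+1$; in that case $V_i$ is a clique by (iii). This gives \eqref{eq:threshold-degree}.
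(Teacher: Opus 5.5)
Your arguments for (i), (ii), the monotone off-diagonal staircase, the universality of $V_1$ on $W$, and the count in (iv) are fine, and peeling off $V_1$ and $V_r$ is a workable route. The gap is that the peeling step, which carries all of (iii), is asserted rather than proved. The phrase ``otherwise we could merge classes or the degree order would break'' does not show that $V_r$ sees only $V_1$. The claim that $G-(V_1\cup V_r)$ is a threshold graph whose non-isolated twin classes are exactly $V_2,\dots,V_{r-1}$ is never checked. The diagonal paragraph is only a heuristic. What is needed is the following.
\begin{itemize}
\item[(a)] For $r\ge2$, $V_r$ is independent. An edge $uu'$ inside $V_r$ gives $u'\in N(u)\setminus\{w\}\subseteq N(w)$ for every $w\in W\setminus V_r$. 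Then $V_r$ would be a clique complete to $W$, with $d(V_r)=|W|-1\ge d(V_1)$, a contradiction.
\item[(b)] For $r\ge3$, $V_r$ is not complete to $V_2$. Otherwise, domination of $u\in V_r$ by the other classes makes $V_2$ complete to every other class and makes both $V_1$ and $V_2$ cliques. Then vertices of $V_1$ and $V_2$ would be twins. With your initial-segment property, (a) and (b) give $N(V_r)=V_1$.
\item[(c)] Therefore deleting $V_1\cup V_r$ lowers every degree in $V_2\cup\dots\cup V_{r-1}$ by exactly $|V_1|=d(V_r)$. So no vertex becomes isolated and the strict degree order survives. Since twin classes of a threshold graph are its degree classes, the new classes are exactly $V_2,\dots,V_{r-1}$.
\item[(d)] $V_1$ is a clique, because it is complete to $V_r$ (or equals $W$ if $r=1$).
\end{itemize}
With (a)--(d), the index shift yields both the off-diagonal rule and the diagonal rule $2i\le r+1$. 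The middle class then needs no separate argument: it is the last class remaining, which has no isolated vertex and so is a clique.

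For comparison, the paper does not peel at all. Your remark that the staircase ``drops exactly once per index'' is precisely its argument, made rigorous by putting the diagonal into the staircase. Write $\{1,\dots,a_i\}$ for the classes contributing to the degree of $V_i$. Then $d(V_i)=S_{a_i}-\ind_{\{i\le a_i\}}$, and $a_i\le a_{i+1}$ would force $d(V_{i+1})\ge d(V_i)$. So $a_1>\dots>a_r$ within $\{1,\dots,r\}$, whence $a_i=r+1-i$. The diagonal is indispensable here. For odd $r\ge3$ the off-diagonal segments of $V_{(r+1)/2}$ and $V_{(r+3)/2}$ coincide, and the two classes differ only because the first is a clique. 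The pigeonhole settles the off-diagonal rule, the diagonal rule and the middle class at once, whereas your induction needs the separate facts (a)--(d).
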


\begin{proof}
By Observation \ref{prop:threshold-equal-degree}, we have that distinct maximal twin classes
have distinct degrees, so the ordering by decreasing degree is well-defined.

First fix $i$.  If (i) failed, then there would be vertices
$u,v,w\in V_i$ such that $uv\notin E(G)$ and $uw\in E(G)$.  Since $v,w$ are twins, 
\begin{align*}
  N(w)\setminus\{v\}=N(v)\setminus\{w\}.
\end{align*}
But $u\in N(w)\setminus\{v\}$ and $u\notin N(v)\setminus\{w\}$, a
contradiction. 

Next fix $i\ne j$.  If one edge occurs between $V_i$ and $V_j$, then the twin property inside $V_i$ and inside $V_j$ forces every cross-pair to be adjacent.  Hence $V_i$ and $V_j$ are complete to each other.  If no such edge occurs, they are anti-complete.  This proves (ii).

Then we  prove (iii). Suppose that $k>1$ and $V_k$ is complete to $V_i$.  We claim that
$V_{k-1}$ is also complete to $V_i$.  Take $u\in V_k$ and $v\in V_{k-1}$.  Then $d(u)<d(v)$, and Observation \ref{prop:threshold-equal-degree} gives
\begin{align}\label{use-obser-2.7-trivial}
  N(u)\setminus\{v\}\subset N(v)\setminus\{u\}.
\end{align}

If $i\notin \{k,k-1\}$, then it follows from $V_k$ is complete to $V_i$ that $V_i\subseteq N(u)$ and then  $V_i\subseteq N(v)$ by \eqref{use-obser-2.7-trivial}. This implies that  $V_{k-1}$ is also complete to $V_i$.
If $i=k-1$, then by \eqref{use-obser-2.7-trivial}, either $|V_{k-1}|$ or $G[V_{k-1}]$ has at least one edge. In either case, $G[V_{k-1}]$ is complete by (ii). Suppose that  $i=k$.  Note that the diagonal position is reached through an actual internal edge of $G[V_k]$, which implies  $|V_k|\ge 2$. By  \eqref{use-obser-2.7-trivial}, there are edges between $V_k$ and $V_{k-1}$. By (ii), we have $V_{k-1}$ is  complete to $V_k$. Therefore, 
the indices contributing to the degree of vertices in $V_i$ are exactly
$1,\ldots,a_i$ for some $a_i\in\{1,\ldots,r\}$, and
\begin{align*}
  d(V_i)=S_{a_i}-\ind_{\{i\le a_i\}}, 
\end{align*}
where $d(V_i)$ denotes the degree of a vertex in $V_i$.

We claim that $a_1>a_2>\cdots>a_r$.  If $a_i\le a_{i+1}$ for some $i$, then
\begin{align*}
  d(V_{i+1})\ge S_{a_i}-\ind_{\{i+1\le a_i\}}
  \ge S_{a_i}-\ind_{\{i\le a_i\}}=d(V_i),
\end{align*}
because $i+1\le a_i$ implies $i\le a_i$.  This contradicts the strict degree
ordering.  Hence the $a_i$ are strictly decreasing, and since
$1\le a_i\le r$, we must have
\begin{align*}
  a_i=r+1-i\qquad (i=1,\ldots,r).
\end{align*}
Consequently, for distinct $i,j$, the classes $V_i$ and $V_j$ are complete to
each other exactly when $j\le a_i=r+1-i$, equivalently $i+j\le r+1$.  The same
forced values of $a_i$ give the diagonal bookkeeping convention stated in
(iii).

Finally, a vertex $v\in V_i$ has neighbours precisely in the contributing
classes $V_1,\ldots,V_{r+1-i}$, with $v$ itself subtracted exactly when
$2i\le r+1$.  Their total size is $S_{r+1-i}$, so
\begin{align*}
  d(v)=S_{r+1-i}-\ind_{\{2i\le r+1\}},
\end{align*}
which proves \eqref{eq:threshold-degree}.

\end{proof}

For a sequence of threshold graphs, define $V_i$ and  $S_i$ as in Lemma \ref{prop:threshold-classes}. Let  
\begin{align*}
  x_i\coloneqq\frac{S_i}{n}\qquad(0\le i\le r).
\end{align*}
Then $x_i-x_{i-1}=|V_i|/n$, and for $v\in V_i$, by Lemma \ref{prop:threshold-classes}, 
\begin{align}\label{eq:threshold-normal-degree}
  d(v)=x_{r+1-i}n+O(1).
\end{align}
Consequently,
\begin{align}\label{eq:threshold-beta-asymp}
  \beta_G=\frac{1}{n^2}\sum_{v}d(v)=\frac{1}{n^2}\sum_{i=1}^r\sum_{v\in V_i}d(v)=\sum_{i=1}^r (x_i-x_{i-1})x_{r+1-i}+O(1/n),
\end{align}
and, with $\psi(t)\coloneqq t^2(1-t)$,
\begin{align}\label{eq:threshold-F-asymp}
  \frac{N(S_{2,1},G)}{n^4}=\sum_{v\in V(G)} \Phi_n(d(v))=\sum_{i=1}^r\sum_{v\in V_i} \Phi_n(d(v))=\sum_{i=1}^r (x_i-x_{i-1})\psi(x_{r+1-i})+O(1/n).
\end{align}
When we speak of moving a point class, we mean varying the size of a whole twin
class while keeping the off-diagonal adjacency rule from
Lemma~\ref{prop:threshold-classes}, together with its diagonal degree
bookkeeping convention.

\section{Proof of Theorem \ref{thm:upper-profile}}\label{sec:upper}

In this section we prove Theorem \ref{thm:upper-profile}.  We establish the missing low-density range $0\le\beta<1/4$ by reducing chosen upper-extremal graphs to two structural branches: an almost-regular core and a threshold graph.  
Fix integers $n,m$.  Among all $n$-vertex $m$-edge graphs maximizing $N(S_{2,1},G)$, choose one maximizing $\normsq{G}$.  Call such a graph a \emph{chosen upper-extremal graph}. Our proof relies on the following three key lemmas. We establish Lemma \ref{prop:upper-dichotomy} in Subsection \ref{subsec:prove-lemma3.1}, Lemma \ref{lem:upper-core} in Subsection \ref{subsec:prove-lemma3.2}, and Lemma \ref{lem:upper-threshold-branch} in Subsection \ref{subsec:prove-lemma3.3}. For $\beta\in [0,1/4]$, recall that 
\begin{align*}
        s(\beta)\coloneqq
        \max\{xy^2(1-y)+y(x+y)^2(1-x-y):
        x,y\ge0,\ x+y\le1,\ 2xy+y^2=\beta\}, 
\end{align*}
and 
\[
 c(\beta)=\beta^{3/2}(1-\sqrt{\beta}). 
\]

\begin{lemma}\label{prop:upper-dichotomy}
Every chosen upper-extremal graph is either threshold or has the form $L\sqcup I$, where $I$ is a set of isolated vertices and all degrees inside $L$ differ by at most one.
\end{lemma}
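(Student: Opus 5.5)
The plan is to use Lemma~\ref{lem:transfer} as the only local move and to let the choice of $G$ (maximize $N(S_{2,1},G)$, then $\normsq{G}$) rule out every improving transfer. Let $G$ be a chosen upper-extremal graph. Take distinct $u,v,w$ with $uw\in E(G)$ and $vw\notin E(G)$, and put $G'=G-uw+vw$. This graph has the same number of edges as $G$. Write $a=d(u)$ and $b=d(v)$. Maximality gives
\begin{align*}
(a-b-1)(3a+3b-2n)\le 0,
\end{align*}
and if equality holds, tie-breaking gives $b-a+1\le 0$, i.e.\ $a\ge b+1$. So for every such ``witness'' triple one of the following holds:
\begin{enumerate}[label=\textup{(\alph*)}]
\item $a=b+1$;
\item $a\ge b+2$ and $3a+3b\le 2n$;
\item $a\le b$ and $3a+3b\ge 2n$.
\end{enumerate}
The point is that when $a\le b$ and $3(a+b)<2n$, the move strictly increases $N$. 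When $a\le b$ and $3(a+b)=2n$, the move keeps $N$ fixed but strictly increases $\normsq{G}$. Both contradict the choice of $G$.

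Next I would interpret this via domination. Say that $v$ fails to dominate $u$ if some $w\ne u,v$ satisfies $uw\in E$ and $vw\notin E$. The graph $G$ is threshold exactly when the domination relation is total, that is, when for every pair one of the two vertices dominates the other. So suppose $G$ is not threshold. Then there are $u,v$ and vertices $w_1,w_2$ with $uw_1,vw_2\in E$ and $vw_1,uw_2\notin E$. Apply the trichotomy to both directions $(u,v,w_1)$ and $(v,u,w_2)$. The two directions have degree differences of opposite sign but the same sum $a+b$. Checking the cases:
\begin{itemize}
\item If $|a-b|\ge 2$, then (b) in one direction forces $3(a+b)\le 2n$, while the other direction needs (c), which forces $3(a+b)\ge 2n$. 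Equality then triggers the strict $\normsq{\cdot}$ improvement, a contradiction.
\item If $a=b$, both directions fall under (c), so $3(a+b)\ge 2n$; this is allowed.
\item If $|a-b|=1$, one direction falls under (a) and the other under (c), so again $3(a+b)\ge 2n$.
\end{itemize}
The conclusion is that every non-dominating pair has $|d(u)-d(v)|\le 1$ and $d(u)+d(v)\ge 2n/3$.

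The main obstacle is globalizing this pairwise information into the structure $L\sqcup I$. I would try the following. Let $I$ be the set of isolated vertices and $L=V\setminus I$. I aim to show that, when $G$ is not threshold, every pair in $L$ has degrees differing by at most one. Fix a non-dominating pair $u,v$ with degrees near $D\ge n/3$. For any $x\in L$ with degree far from $D$, I would use transfers involving $x$ together with the edges at $u,v$ to produce a violating triple. A low-degree $x\in L$ is adjacent to some $y$. If $y$ is not adjacent to $u$, then moving $xy$ to $uy$ has $d(x)<d(u)$ and degree sum $d(x)+d(u)$; this must fall under (c), giving $d(x)\ge 2n/3-d(u)$. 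If the sum falls below $2n/3$, this is a contradiction. Repeating with the roles of high- and low-degree vertices swapped should confine all degrees in $L$ to a window, and then the pairwise constraints force width at most one.

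I expect the real case analysis to be to show that a vertex of $L$ cannot be adjacent only to common neighbours of the core, since then no transfer applies directly. I would handle this by moving an edge from the core onto $x$. Here $3(a+b)$ is large and $d(x)$ is smaller than the core degrees, so this lands in case (b)'s complement, which is a contradiction unless $d(x)\ge D-1$.
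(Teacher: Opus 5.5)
There is a genuine gap, and it sits where you expected the difficulty: the globalization. Your local analysis is sound. The trichotomy (with (c) strict, as you note) and the conclusion that a non-comparable pair has $|d(u)-d(v)|\le1$ and $d(u)+d(v)>2n/3$ amount to Lemma~\ref{lem:upper-transfer}, and they already settle the case $\Delta(G)\le n/3$.

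The step you propose for the ``hard case'' fails. Let $x$ be non-isolated with $d(x)\le\Delta(G)-2$. Applying your trichotomy to $x$ and a maximum-degree vertex (a witness exists by counting) gives $d(x)+\Delta(G)\le 2n/3$. The same trichotomy shows that the core vertices $u,v$ have degree at least $\Delta(G)-1$. Hence $\{x,u\}$ and $\{x,v\}$ are comparable pairs in which $u$ and $v$ dominate $x$. So your first sub-case, a neighbour $y$ of $x$ outside $N(u)\cup\{u\}$, never occurs: every low non-isolated vertex is in the hard case. But there, moving a core edge $uw$ onto $x$ has degree sum $d(u)+d(x)\le\Delta(G)+d(x)\le 2n/3$. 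It therefore falls under your case (a) or (b) and is permitted. The claim ``$3(a+b)$ is large'' holds for the pair $\{u,v\}$, not for $\{u,x\}$. In this configuration no single transfer is forbidden, so no local contradiction exists, and the proposed ``window'' argument has nothing to work with.

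What is needed is a global structural argument: a non-isolated vertex of degree at most $\Delta(G)-2$ forces $G$ to be threshold, which then contradicts the existence of a non-comparable pair. This is what the paper does.

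\begin{itemize}
\item Let $L$ be the vertices of degree $\Delta(G)$ or $\Delta(G)-1$ and $S$ the rest. Every vertex of $L$ dominates every vertex of $S$, so each neighbour of a non-isolated $s\in S$ is adjacent to all of $L$ except itself.
\item Split according to whether some vertex of $S$ has a neighbour in $S$. If so, $L$ is a clique complete to those vertices.
\item Among the low vertices with no low neighbour, take $z^*$ of maximum degree, so that all their neighbourhoods lie in $N(z^*)$. Set $X=L\setminus N(z^*)$.
\item If $X=\emptyset$, then $L$ is a clique and a threshold ordering is immediate.
\item If $X\neq\emptyset$, compare $d(x)$ for $x\in X$ with $d(y)$ for $y\in N(z^*)$ against the window $\{\Delta(G)-1,\Delta(G)\}$. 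This forces each $y\in N(z^*)$ to have exactly one low neighbour, so $z^*$ is the only non-isolated low vertex. When $S$ is independent, it also forces $X$ to be a clique. An explicit threshold ordering then follows in each case.
\end{itemize}

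This degree-counting step is the real content of the lemma, and it is missing from your plan.

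A minor point: ``threshold iff domination is total'' also needs transitivity of domination. Alternatively, simply order the vertices by degree.
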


\begin{lemma}\label{lem:upper-core}
Let \(0\le \beta\le 1/4\), and let \((G_n)\) be a graph chosen upper-extremal graph sequence with
$2e(G_n)/n^2\to \beta$.
Suppose that, for each \(n\),
$G_n=H_{s_n}\sqcup I_{n-s_n}$, 
\(I_{n-s_n}\) is a set of isolated vertices and all degrees inside \(H_{s_n}\)
differ by at most one. Then
\[
\limsup_{n\to\infty}\frac{N(S_{2,1},G_n)}{n^4}\le c(\beta).
\]
\end{lemma}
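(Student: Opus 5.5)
We would prove Lemma~\ref{lem:upper-core} by a direct computation, since an almost-regular core leaves essentially one free parameter. Write $s=s_n$ and $\sigma_n\coloneqq s_n/n\in[0,1]$. Let $\delta_n$ be the common normalized degree in $H_{s_n}$, so every vertex of $H_{s_n}$ has degree $\delta_n n+O(1)$ and $\delta_n\le\sigma_n$ up to $O(1/n)$. The isolated vertices have degree $0$ and hence contribute $\Phi_n(0)=0$. By \eqref{EQ:number-S2,1-Phi-degree},
\begin{align*}
  \frac{N(S_{2,1},G_n)}{n^4}=\sigma_n\,\psi(\delta_n)+O(1/n),\qquad
  \frac{2e(G_n)}{n^2}=\sigma_n\delta_n+O(1/n),
\end{align*}
where $\psi(t)=t^2(1-t)$ as before. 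Since all degrees in the core differ by at most one, the error in replacing each $\Phi_n(d(v))/n^3$ by $\psi(\delta_n)$ is $O(1/n)$ uniformly.

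Pass to a subsequence achieving the $\limsup$, and then to a further subsequence along which $\sigma_n\to\sigma$ and $\delta_n\to\delta$. In the limit we get $\sigma\delta=\beta$, $0\le\delta\le\sigma\le1$, and the $\limsup$ equals $\sigma\psi(\delta)=\beta\,\delta(1-\delta)$. If $\beta=0$ this is $0=c(0)$, so there is nothing to prove. Otherwise the constraint $\delta\le\sigma=\beta/\delta$ forces $\delta\le\sqrt\beta\le 1/2$. The function $\delta\mapsto\delta(1-\delta)$ is increasing on $[0,1/2]$, so
\[
  \beta\,\delta(1-\delta)\le\beta\sqrt\beta\,(1-\sqrt\beta)=c(\beta).
\]

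The only genuine point is the constraint $\delta\le\sigma$, which comes from degrees in $H_{s_n}$ being at most $s_n-1$, combined with $\beta\le1/4$ ensuring that $\sqrt\beta$ lies on the increasing side of $\delta(1-\delta)$. Everything else is bookkeeping of $O(1/n)$ errors. We would check carefully that the $O(1)$ degree fluctuation does not affect the leading term, which holds since $\Phi_n(k+1)-\Phi_n(k)=M_n(k)=O(n^2)$. Note that the chosen-extremal hypothesis is not actually needed for this bound.
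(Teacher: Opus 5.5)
Your proof is correct and is the paper's argument up to the change of variables $\delta=\beta/\sigma$. The paper maximizes $f_\beta(\sigma)=\frac{\beta^2}{\sigma}\bigl(1-\frac{\beta}{\sigma}\bigr)$ over $\sigma\in[\sqrt\beta,1]$, with the same constraint coming from $d(v)\le s_n$ and $f_\beta'\le 0$ being equivalent to your $\delta\le 1/2$. Your parametrization also covers $\beta=0$ directly, which the paper handles separately with a crude $o(n^4)$ bound.
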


\begin{lemma}\label{lem:upper-threshold-branch}
For $0\le\beta\le1/4$, the chosen upper-extremal threshold graph $G$ with desity $\beta$   has limiting $S_{2,1}$-density at
most $s(\beta)$.
\end{lemma}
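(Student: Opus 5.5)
The plan is to pass to the limiting staircase description of threshold graphs and then solve a finite-dimensional optimization. Let $G=G_n$ be a chosen upper-extremal threshold graph. Partition its non-isolated vertices into the twin classes $V_1,\dots,V_r$ of Lemma~\ref{prop:threshold-classes}, and set $x_i=S_i/n$. By \eqref{eq:threshold-beta-asymp} and \eqref{eq:threshold-F-asymp}, up to $O(1/n)$ errors,
\begin{align*}
  \beta=\sum_{i=1}^r (x_i-x_{i-1})x_{r+1-i},\qquad
  \frac{N(S_{2,1},G)}{n^4}=\sum_{i=1}^r (x_i-x_{i-1})\psi(x_{r+1-i}).
\end{align*}
It therefore suffices to show that every staircase satisfying the constraints inherited from extremality has value at most $s(\beta)$. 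The target has a clear interpretation. The two-class staircase, with $V_1$ a clique of size $yn$ completely joined to an independent set $V_2$ of size $xn$, has density $2xy+y^2$. Its value is $xy^2(1-y)+y(x+y)^2(1-x-y)$, which is exactly the expression defining $s(\beta)$, and the quasi-clique is its special case $x=0$. So the goal is to reduce to at most two non-isolated classes.

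\textbf{Step 1 (constraints from the choice of $G$).} Apply Lemma~\ref{lem:transfer} to vertices $u\in V_i$ and $v\in V_j$ with $d(u)>d(v)+1$. By Observation~\ref{prop:threshold-equal-degree} there is a $w\in N(u)\setminus N(v)$ whenever the classes are distinct and $w$ can be chosen outside $\{u,v\}$, which holds for $n$ large. Maximality of $N(S_{2,1},G)$ then forces $d(u)+d(v)\le 2n/3$. In the limit this gives $x_{r+1-i}+x_{r+1-j}\le 2/3$ for all distinct non-isolated classes. Consequently every class other than the top one, and in fact all but at most one level, has normalized degree at most $1/3$. The degree-square tie-break (part~(ii) of the lemma) handles the equality case $3d(u)+3d(v)=2n$: among equal-count graphs it prefers spreading degrees, so ties are resolved toward more extreme degree levels.

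\textbf{Step 2 (reduction to two classes).} On $[0,1/3]$ the function $\psi(t)=t^2(1-t)$ is convex, since $\psi''(t)=2-6t\ge0$. I would use this through ``point-class moves'': vary the size of one twin class while compensating with another so that $\beta$ stays fixed, in the sense described after \eqref{eq:threshold-F-asymp}. The first-order conditions express that, on the branch where all relevant degrees lie in $[0,1/3]$, the objective is a convex function of the middle breakpoints along the constraint surface. Hence its maximum is attained when intermediate breakpoints collide, which merges adjacent classes or pushes a class to size zero. Iterating should leave at most one clique class and at most one independent class, i.e. $r\le2$. The top class may have degree above $1/3$, but only one class can, so it cannot break the argument.

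\textbf{Step 3 (conclusion).} With $r\le2$ and density $\beta$, the value is at most the maximum in the definition of $s(\beta)$. Letting $n\to\infty$ proves the lemma.

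I expect Step 2 to be the main obstacle. The constraint $\sum(x_i-x_{i-1})x_{r+1-i}=\beta$ couples breakpoint $i$ with breakpoint $r+1-i$, so moving one breakpoint changes both a class size and the degree of the ``mirror'' class. Convexity therefore has to be checked along a constrained direction rather than coordinatewise. My first attempt would be to vary the mirror pair $(x_j,x_{r+1-j})$ jointly, keeping $\beta$ fixed, and compute the second derivative explicitly. That derivative should be nonnegative using $x\le1/3$ from Step 1 together with $\beta\le1/4$. If this fails for some middle configurations, the fallback is an explicit Lagrange-multiplier analysis of a finite staircase: show that any interior critical point with $r\ge3$ violates either the sum condition from Step 1 or the tie-breaking condition.
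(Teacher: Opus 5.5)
\textbf{There is a genuine gap, in Step 2.} Step 1 is correct: the vertex $w$ exists by degree counting alone, and this is Lemma~\ref{lem:upper-transfer}(b). Step 3 is also right, since a threshold graph with at most two positive twin classes is exactly the split construction in $s(\beta)$.

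Step 2, however, carries the whole content of the lemma and is only asserted. In breakpoint coordinates the constraint $\sum_i(x_i-x_{i-1})x_{r+1-i}=\beta$ is quadratic. So convexity of $\psi$ on $[0,1/3]$ does not give convexity of $F_r$ along the fixed-density surface. For instance, the density-preserving move of the outer endpoints $x_1,x_r$ is computed in the proof of Lemma~\ref{lem:no-interior-xrless1}. Its second derivative is $-2x_1\bigl(3x_1^2-x_1+(x_r-x_{r-1})^2\bigr)/(x_r-x_{r-1})$. This is negative at $x_1=0.01$, $x_{r-1}=0.1$, $x_r=0.3$, even though every level lies in $[0,1/3]$. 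Hence ``the maximum is attained when breakpoints collide'' does not follow.

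\textbf{More seriously, convexity alone cannot yield $r\le 2$.} The paper obtains a genuinely convex problem with a different parametrization. It fixes the clique size $z$ and works with the tail degrees $u_i$. Then the edge count becomes the linear condition $\sum_i u_i=A$. The count is $z^3(1-z)$ plus a separable function, convex in each $u_i\le z<1/3$. So the maximum sits at an extreme point $(z,\dots,z,h,\dots,h,0,\dots,0)$ of the capped simplex (Claim~\ref{lem:capped-simplex}), after which convexity along the $p$-fiber is used.

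What survives is not only the split faces but also the mixed face $p=z$. This is a clique $A\cup B$ of size $z$, with $A$ of size $h$ complete to an independent class $C$ of size $\eta$, $B$ anti-complete to $C$, and all other vertices isolated. Its value is $R(h,\eta,z)$ on $z^2+2h\eta=\beta$. This three-class family is not a collision of breakpoints. $R$ is not convex along the constraint there: fixing $z$ and trading $h$ against $\eta$ gives a negative second derivative for small $h$. Moreover, the Lagrange system $E_1=E_2=0$ has interior solutions with all three degree levels below $1/3$, for example near $z=0.1$, $h/z\approx 0.44$, $\eta\approx 0.07$. Neither your sum condition $x_a+x_b\le 2/3$ nor the degree-square tie-break excludes these, so your proposed fallback of ruling out interior critical points cannot work as stated.

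\textbf{The missing idea is a direct value comparison with the split graph.} Take the split graph with the same $z$ and $x=h\eta/z$. The paper factors $S_\beta(z)-R=\frac{xz(1-\mu)}{\mu^2}P(\mu,z,x)$ with $\mu=h/z$, and checks $P\ge 0$ on the boundary pieces. At interior critical points it uses the identity $P-\mu^3z(\mu-1)(4\mu z+z-2)=(2\mu-1)E_1+(1-\mu)E_2$ to conclude $P>0$.

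Note that the paper never uses extremality in this lemma, only that $G$ is threshold. Your Step 1 does settle the case $z>1/3$ quickly: there all vertices of degree above $n/3$ must have degrees within one of each other, which forces the split shape. But it gives nothing in the case $z<1/3$, which is exactly where the three-class face lives.
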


Assuming these three structural inputs, the low-density upper bound 
follows by considering the two alternatives separately.

\begin{proof}[Proof of Theorem~\ref{thm:upper-profile}]

Let 
\begin{align*}
  \rho(G)\coloneqq\frac{N(S_{2,1},G)}{|V(G)|^4},
\end{align*}
and $\beta\in [0, 1/4]$.  Let $(G_n)$ be any $S_{2,1}$-good graph sequence
with $\beta_{G_n}\to\beta$, and let $\rho_*$ be a  subsequential limit of
$\rho(G_n)$ that attains the supremum.  Replacing each $G_n$ by a chosen upper-extremal graph with the
same number of vertices and edges can only increase $\rho(G_n)$.  Hence it is
enough to bound subsequential limits of chosen upper-extremal sequences.

After passing to a subsequence, by Lemma \ref{prop:upper-dichotomy}, we may asssume that either 
$G_n=H_{s_n}\sqcup J_n,$ where $J_n$ is an independent set of isolated vertices and the degrees in
$H_{s_n}$ differ by at most one, or $G_n$ is threshold.  In the first case, Lemma~\ref{lem:upper-core} gives $\rho^*\le c(\beta)$.
Since the split feasible set of $s(\beta)$ contains the point \((x,y)=(0,\sqrt{\beta})\), we have $s(\beta)\ge c(\beta).$
Thus \(\rho_*\le s(\beta)\).  In the second case, Lemma
\ref{lem:upper-threshold-branch} gives directly
$
  \rho_*\le s(\beta).
$
By Proposition \ref{lem:split-quasiclique-comparison}, we conclude that  
\begin{align*}
  I(S_{2,1},\beta)\le  
  \begin{cases}
    s(\beta), & 0\le\beta\le\beta_*,\\
    c(\beta), & \beta_*\le\beta\le1/4.
  \end{cases}
\end{align*}

For the reverse inequality in the low-density range, we use the construction given by Balogh, Lidick\'y, Mubayi, Pfender and Volec~\cite{BLMPV}. Let $G_n$ have a vertex partition
\begin{align*}
  V(G_n)=X_n\sqcup Y_n\sqcup Z_n,
  \qquad |X_n|=xn+o(n),\quad |Y_n|=yn+o(n),
\end{align*}
where $Y_n$ is a clique, $X_n$ is independent and complete to $Y_n$, and
$Z_n$ consists of isolated vertices.  A easy conclusions (see \cite{BLMPV}) shows that 
\begin{align*}
  \beta_{G_n}\to y^2+2xy,
\end{align*}
and
\begin{align*}
  \frac{N(S_{2,1},G_n)}{n^4}
  \to xy^2(1-y)+y(x+y)^2(1-x-y).
\end{align*}
Consequently, $s(\beta)$ is the largest limiting value over these split
constructions with edge density $\beta$.
This completes the proof of Theorem \ref{thm:upper-profile}. \end{proof}

\subsection{Proof of Lemma~\ref{prop:upper-dichotomy}}\label{subsec:prove-lemma3.1}
In this subsection, we prove Lemma~\ref{prop:upper-dichotomy}. We first establish the upper transfer rule,  and then use it to force the 
dichotomy between threshold graphs and almost-regular cores with isolated 
vertices.
\begin{lemma}\label{lem:upper-transfer}
    Let $G$ be a chosen upper-extremal graph and $u,v\in V(G)$ with $d(u)\le d(v)$.  Then:
    \begin{enumerate}[label=\textup{(\alph*)}]
        \item If  $d(u)+d(v)\le 2n/3$, then $v$ dominates $u$.
        \item If $d(u)+d(v)>2n/3$, then $d(v)-d(u)\le1$.
    \end{enumerate}
\end{lemma}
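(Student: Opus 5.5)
The plan is to argue by contradiction in both parts, using the transfer move of Lemma~\ref{lem:transfer}. A chosen upper-extremal graph $G$ maximizes $N(S_{2,1},\cdot)$ among $n$-vertex $m$-edge graphs, and among those it maximizes $\normsq{\cdot}$. Any move $G'=G-uw+vw$ keeps $n$ and $m$ fixed. So for every such move we must have either $N(S_{2,1},G')<N(S_{2,1},G)$, or $N(S_{2,1},G')=N(S_{2,1},G)$ together with $\normsq{G'}\le\normsq{G}$. We therefore exhibit a move that violates this, using
\begin{align*}
\Delta N=(d(u)-d(v)-1)(3d(u)+3d(v)-2n),\qquad \Delta\normsq{\cdot}=2(d(v)-d(u)+1).
\end{align*}

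For (a), suppose $d(u)\le d(v)$, $d(u)+d(v)\le 2n/3$, and $v$ does not dominate $u$. Then there is a vertex $w\ne u,v$ with $uw\in E(G)$ and $vw\notin E(G)$. We move the edge $uw$ to $vw$. Here $d(u)-d(v)-1\le -1<0$ and $3d(u)+3d(v)-2n\le 0$, so $\Delta N\ge 0$. If $\Delta N>0$ we contradict maximality of $N$. If $\Delta N=0$, which happens exactly when $d(u)+d(v)=2n/3$, then $\Delta\normsq{\cdot}=2(d(v)-d(u)+1)\ge 2>0$, contradicting the tie-breaking choice. Hence $v$ dominates $u$.

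For (b), suppose $d(u)+d(v)>2n/3$ and $d(v)-d(u)\ge 2$. We move an edge from $v$ to $u$, that is, apply Lemma~\ref{lem:transfer} with the roles of $u,v$ swapped. We need $w\ne u,v$ with $vw\in E$ and $uw\notin E$. Such a $w$ exists by counting: $|N(v)\setminus\{u\}|\ge d(v)-1>d(u)\ge|N(u)\setminus\{v\}|$, so $N(v)\setminus\{u\}\not\subseteq N(u)\setminus\{v\}$. Moving $vw$ to $uw$ gives
\begin{align*}
\Delta N=(d(v)-d(u)-1)(3d(u)+3d(v)-2n).
\end{align*}
The first factor is at least $1$ and the second is strictly positive, so $\Delta N>0$, contradicting the maximality of $N$. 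Thus $d(v)-d(u)\le 1$.

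The only delicate point is the boundary case $d(u)+d(v)=2n/3$ in (a), where $\Delta N=0$. This is exactly where the $\normsq{\cdot}$ tie-breaker is needed, and its sign works out in our favour. Part (b) needs the gap $d(v)-d(u)\ge 2$, rather than $1$, both so that the neighbour $w$ exists and so that the first factor of $\Delta N$ is nonzero.
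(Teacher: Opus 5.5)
Your proof is correct and follows the paper's argument: you use the same edge moves from Lemma~\ref{lem:transfer}, with the $\normsq{\cdot}$ tie-breaker settling the case $\Delta N=0$ in (a) and a strict increase of $N(S_{2,1},\cdot)$ in (b). You are slightly more explicit than the paper, both in separating $\Delta N>0$ from $\Delta N=0$ and in justifying that $w$ exists in (b); note that the sharper count $|N(v)\setminus\{u\}|-|N(u)\setminus\{v\}|=d(v)-d(u)$ already produces $w$ for a gap of $1$, so the gap of $2$ is needed only to make the first factor positive.
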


\begin{proof}
    For (a), suppose that the statement fails. Choose $w\in N(u)\setminus (N(v)\cup\{v\})$, and let $G'=G-uw+vw$. Then by Lemma \ref{lem:transfer},
    \begin{align*}
      N(S_{2,1},G')-N(S_{2,1},G)=(d(u)-d(v)-1)(3d(u)+3d(v)-2n).
    \end{align*}
Here $d(u)\le d(v)$ implies $d(u)-d(v)-1<0$, and $d(u)+d(v)\le2n/3$ implies $3d(u)+3d(v)-2n\le0$.  Hence the product is nonnegative.   On the other hand,  Lemma \ref{lem:transfer} gives
    \begin{align*}
      \normsq{G'}-\normsq{G}=2(d(v)-d(u)+1)>0, 
    \end{align*}
which contradicts the choice of $G$.
    
    For (b), suppose that $d(v)\ge d(u)+2$.  Choose $w\in N(v)\setminus (N(u)\cup\{u\})$ and move $vw$ to $uw$.  In Lemma \ref{lem:transfer}, with the roles of $u$ and $v$ interchanged, the two factors are
    \begin{align*}
      d(v)-d(u)-1>0,
      \qquad
      3d(u)+3d(v)-2n>0.
    \end{align*}
    Thus the $S_{2,1}$-count increases, a contradiction.
\end{proof}

We now use these transfer rules to prove the structural lemma (Lemma~\ref{prop:upper-dichotomy}). The 
argument separates vertices of maximum degree from the remaining 
vertices and then shows that every non-core configuration is forced to 
be threshold.

\begin{proof}[Proof of Lemma~\ref{prop:upper-dichotomy}]
Let $G$ be a $n$-vertex chosen upper-extremal graph. If $\Delta(G)\le n/3$, then Lemma \ref{lem:upper-transfer}(a) applies to every ordered pair of vertices, so neighborhoods are linearly ordered by domination and $G$ is threshold.
    
In the following, assume that $\Delta(G)>n/3$.  let 
    \begin{align*}
      L\coloneqq\{v:d(v)\in\{\Delta(G),\Delta(G)-1\}\},\qquad S\coloneqq V(G)\setminus L.
    \end{align*}
For  $v\in S$, we have $|d(v)-\Delta(G)|>1$.   Lemma \ref{lem:upper-transfer} (b) gives
    \begin{align}\label{EQ:bound-d(v)-S-Lemma}
d(v)\le 2n/3-\Delta(G)<n/3.
    \end{align}

If $S$ is a  set of isolated vertice of $G$, then we are done, since $V(G)$ already admits a bipartition. Assume that $S$ contains a non-isolated vertex,
    and let
    \begin{align*}
      R\coloneqq\{v\in S:N(v)\cap S\ne\emptyset\},\qquad Z\coloneqq S\setminus R.
    \end{align*}

\begin{claim}\label{claim-G-threshold-R-nonemptyset}
If $R\ne\emptyset$, then $G$ is threshold. 
\end{claim}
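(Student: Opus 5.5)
The plan is to verify domination for every pair of vertices, split according to where the pair lies: inside $S$, across $S$ and $L$, or inside $L$. Recall that $\Delta(G)>n/3$. By \eqref{EQ:bound-d(v)-S-Lemma}, every $u\in S$ satisfies $d(u)\le 2n/3-\Delta(G)<n/3$.

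The first two cases follow directly from Lemma~\ref{lem:upper-transfer}(a). For $u,u'\in S$ we have $d(u)+d(u')<2n/3$, so the vertex of larger degree dominates the other. For $u\in S$ and $v\in L$ we have $d(u)\le \Delta(G)-2<d(v)$, and
\begin{align*}
d(u)+d(v)\le \bigl(2n/3-\Delta(G)\bigr)+\Delta(G)=2n/3,
\end{align*}
so again $v$ dominates $u$. Hence every vertex of $L$ dominates every vertex of $S$, and $S$ is linearly ordered by domination. Note that neither case uses $R\neq\emptyset$. The hypothesis is therefore needed only for pairs inside $L$.

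Next I exploit $R\ne\emptyset$. Fix $r\in R$ and a neighbour $s\in S$ of $r$, so that $s\in R$ as well. Take any $v\in L$. Since $v$ dominates $r$ and $s\in N(r)\setminus\{v\}$, we get $s\in N(v)$; symmetrically $r\in N(v)$. Thus $r$ and $s$ are complete to $L$. In particular $|L|\le d(r)<n/3$, so $L$ is small. Since each $v\in L$ has degree at least $\Delta(G)-1$, which is larger than $|L|$ once $\Delta(G)>n/3$, each $v\in L$ has many neighbours in $S$. I would then use the domination order on $S$ to show that $N(v)\cap S$ is an initial segment of that order. Indeed, if $v$ is adjacent to $u\in S$ and $u'\in S$ dominates $u$, then $v\in N(u)\setminus\{u'\}\subseteq N(u')$, so $v$ is adjacent to $u'$.

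The main obstacle is comparability of two vertices $x,y\in L$, whose degrees differ by at most one and whose degree sum may exceed $2n/3$, so that Lemma~\ref{lem:upper-transfer}(a) is unavailable. Suppose $x,y$ are incomparable, with witnesses $a\in N(x)\setminus N(y)$ and $b\in N(y)\setminus N(x)$. Using the initial-segment structure, I would first reduce to the case where the witnesses lying in $S$ are comparable: say $a$ dominates $b$. I would then build a modified graph by a double switch, $G'=G-xb+xa$ combined with $G'=G-ya+yb$, or the single move $G-yb+ya$ when $d(x)=d(y)+1$. By Lemma~\ref{lem:transfer}, the degrees of $x$ and $y$ are unchanged in the double switch and only $a,b\in S$ change degree. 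The transfer then behaves like a transfer between two low-degree vertices of $S$. There $3d(a)+3d(b)-2n<0$, so Lemma~\ref{lem:transfer}(i) shows the count does not decrease when the edge goes toward the higher-degree vertex, while Lemma~\ref{lem:transfer}(ii) shows $\normsq{G}$ strictly increases. This contradicts the choice of $G$ as a chosen upper-extremal graph. I expect the delicate point to be the case where a witness lies in $L$ itself, or where $a,b$ have equal degree. For that case I would first try routing the switch through the vertices $r,s$, which are adjacent to all of $L$, so as to replace an $L$-witness by an $S$-witness. Once all pairs are comparable, the neighbourhoods are linearly ordered by domination, and $G$ is threshold.
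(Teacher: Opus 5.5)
Your treatment of pairs that meet $S$ is correct. Lemma~\ref{lem:upper-transfer}(a) makes domination a total preorder on $S$, and every vertex of $L$ dominates every vertex of $S$. Hence each $N(v)\cap S$ with $v\in L$ is an up-set of that preorder.

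The gap is the step you yourself call the main obstacle: comparability of two vertices $x,y\in L$. The proposal never establishes it, and the switching argument cannot do so as written.
\begin{itemize}
\item $G-xb+xa$ deletes a non-edge and adds an existing edge.
\item If you meant $G-xa+xb$ together with $G-yb+ya$, the combined move is a $2$-switch that preserves every degree, including those of $a$ and $b$. By Lemma~\ref{lem:transfer} it changes neither $N(S_{2,1},G)$ nor $\normsq{G}$, so it gives no contradiction.
\item A single move such as $G-yb+ya$ is just Lemma~\ref{lem:upper-transfer}(a) again, and it is not needed. Two witnesses $a\in N(x)\setminus N(y)$ and $b\in N(y)\setminus N(x)$ both lying in $S$ are impossible outright, since up-sets of a total preorder are nested.
\end{itemize}
So the only real case is a witness in $L$, i.e.\ a non-edge inside $L$. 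There you only say you ``would first try'' routing a switch through $r,s$, which is not an argument.

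The missing idea takes one line, and it is the paper's second step: $L$ is a clique. You already showed that every $v\in L$ dominates $r$ and that $r$ is complete to $L$. So for distinct $v,v'\in L$ you get $v'\in N(r)\setminus\{v\}\subseteq N(v)$. With $L$ a clique, for $x,y\in L$ one has $N(x)\setminus\{y\}=(L\setminus\{x,y\})\cup(N(x)\cap S)$, and symmetrically for $y$. The nestedness of $N(x)\cap S$ and $N(y)\cap S$ then gives comparability with no switching at all.

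With this inserted, your route is complete and in fact shorter than the paper's. After showing $L$ is a clique, the paper builds an explicit ordering and runs a case analysis on $Z$: the vertex $z^*$, the set $X=L\setminus N(z^*)$, and a degree count forcing $t_y=1$. Your pairwise-comparability argument treats $R$ and $Z$ uniformly. It does use that domination is transitive, so that pairwise comparability yields a threshold ordering, but the paper relies on the same fact in the case $\Delta(G)\le n/3$.
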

\begin{proof}

First, we show $R$ is complete to $L$.  Indeed, take $v\in R$, choose $v'\in R\setminus\{v\}$ with $vv'\in E(G)$, and fix $\omega\in L$.  By \eqref{EQ:bound-d(v)-S-Lemma}, we have
    \begin{align*}
      d(v)+d(\omega)\le (2n/3-\Delta(G))+\Delta(G)=2n/3,
    \end{align*}
Applying Lemma \ref{lem:upper-transfer} (a)  to the pair $(v,\omega)$ yields that $\omega$ dominates $v$.  Thus, we have  $v'\in N(\omega)$.  Applying the same argument to $(v',\omega)$ gives $v\in N(\omega)$.  We conclude that $R$ is complete to $L$

 Then, we show $L$ is complete. Fix $v\in R$.  If $\omega_1,\omega_2\in L$ are distinct, then $\omega_2\in N(v)\setminus\{\omega_1\}$, and the same application of Lemma \ref{lem:upper-transfer}(a) to $(v,\omega_1)$ gives $\omega_2\in N(\omega_1)$.  Thus $L$ is complete.

Recall that the vertices of $R$ have degree less than $n/3$ by \eqref{EQ:bound-d(v)-S-Lemma}. Lemma \ref{lem:upper-transfer} (a) orders their neighborhoods, and so  $G[R]$ is threshold.  If $Z=\emptyset$, then placing the clique $L$ before this threshold ordering of $R$ gives a threshold ordering.  

Suppose that $Z\neq \emptyset$. If all vertices of $Z$ are isolated vertices in $G$, then place them at the end of that ordering and we are done.  Assume that $Z$ contains a non-isolated vertex.  Choose $z^*\in Z$
    with maximum degree among vertices of $Z$. We have
    $N(z^*)\subseteq L$, and let
    \begin{align*}
      X\coloneqq L\setminus N(z^*).
    \end{align*}
    For every $z\in Z\setminus\{z^*\}$, applying
    Lemma~\ref{lem:upper-transfer}(a) to $(z,z^*)$, together with the maximal
    choice of $z^*$, gives $N(z)\subseteq N(z^*)$.  Thus, $Z$ is
    anti-complete to $X$ and to $R$.  If $X=\emptyset$, then $N(z^*)=L$ and we order $V(G)$ as follows: Place 
    the clique $L$ first, then a threshold ordering of $R$, and finally the
    vertices of $Z$ ordered by their nested neighborhoods in $L$.  This is a
    threshold ordering of $G$.  Indeed, $L$ dominates $R$, since $L$ is complete
    to $L\cup R$ and vertices of $R$ have no neighbors outside $L\cup R$.  Also
    $R$ dominates $Z$: each $z\in Z$ has $N(z)\subseteq L$, while every vertex of
    $R$ is adjacent to all vertices of $L$.  The chosen orders also handle pairs inside
    $R$.
    
Suppose that $X\ne\emptyset$. For $x\in X$ and $y\in N(z^*)$,
    \begin{align*}
      d(x)=|L|-1+|R|,
      \qquad
      d(y)=|L|-1+|R|+t_y,
      \qquad t_y\coloneqq|N_Z(y)|\ge1.
    \end{align*}
Recall that all vertices of $L$ have degree either $\Delta(G)$ or $\Delta(G)-1$. We have 
    \begin{align*}
      d(x)=\Delta(G)-1,
      \qquad
      d(y)=\Delta(G),
      \qquad
      t_y=1\quad\text{for every }y\in N(z^*).
    \end{align*}
This implies that $z^*$ is the unique non-isolated vertex of $Z$.  Then the ordering 
    \begin{align*}
      N(z^*)<X<R<z^*<Z\setminus\{z^*\}
    \end{align*}
    (with the internal threshold order on $R$) is a threshold ordering.  Indeed,
    vertices in $N(z^*)$ dominate vertices in $X$, since they have the same
    neighbors in $L\setminus\{x,y\}$ and in $R$, and additionally see $z^*$.
    Every $x\in X$ dominates every $v\in R$, since $x$ is complete to
    $(L\setminus\{x\})\cup R$, while $N(v)\subseteq L\cup R$.  Every $v\in R$
    dominates $z^*$, since $N(z^*)\subseteq N(v)$, and every vertex dominates an isolated vertex.  
\end{proof}

By Claim~\ref{claim-G-threshold-R-nonemptyset}, it remains to consider $R=\emptyset$, which we treat by a strategy analogous to that of Claim~\ref{claim-G-threshold-R-nonemptyset}. Note that $S=Z$ is independent. If $Z$ is the set of isolated vertices, then we are again in the second branch. 

Suppose that $Z\neq \emptyset$. Choose $z^*\in Z$ of maximum degree among vertices of $Z$, and put $X=L\setminus N(z^*)$.  For every $z\in Z\setminus\{z^*\}$, we still have $N(z)\subseteq N(z^*)$ by  Lemma~\ref{lem:upper-transfer}(a).   Moreover every $y\in N(z^*)$ is adjacent to every other vertex of $L$: if $\omega\in L\setminus\{y\}$, then $\omega$ dominates $z^*$ by Lemma~\ref{lem:upper-transfer}(a), and $y\in N(z^*)\setminus\{\omega\}$ gives $y\in N(\omega)$. If $X=\emptyset$, then $N(z^*)=L$ and so $L$ is a clique.  Placing  $L$ first
    and then ordering $Z$ by decreasing neighborhoods into $L$ results a a threshold ordering of $G$.

Suppose that $X\ne\emptyset$. Recall that  every $y\in N(z^*)$ is adjacent to every vertex of
    $L\setminus\{y\}$.   Thus, for $x\in X$ and $y\in N(z^*)$,  $y$ sees all vertices of $L\setminus\{y\}$, while the neighbors of $x$ are
    precisely the vertices of $N(z^*)$ together with its neighbors inside $X$.
    Hence
    \begin{align*}
      d(y)=|L|-1+t_y,
      \qquad
      d(x)=|N(z^*)|+d_X(x)\le |L|-1,
    \end{align*}
    where $t_y\coloneqq|N_Z(y)|\ge1$.  By the definition of  $L$,  every $y\in N(z^*)$ has
    degree $\Delta(G)$ and every $x\in X$ has degree $\Delta(G)-1$. Thus, 
    $\Delta(G)=|L|$ and $t_y=1$ for every $y\in N(z^*)$.  Moreover
    \begin{align*}
      d_X(x)=d(x)-|N(z^*)|=|L|-1-|N(z^*)|=|X|-1,
    \end{align*}
which implies that  $X$ is a clique.  Since $t_y=1$ for every $y\in N(z^*)$, the vertices in 
    $Z\setminus\{z^*\}$  are isolated.   The ordering
    \begin{align*}
      N(z^*)<X<z^*<Z\setminus\{z^*\}
    \end{align*}
    is a threshold ordering of $G$, completing the proof. 
\end{proof}

\subsection{Proof of Lemma~\ref{lem:upper-core}}\label{subsec:prove-lemma3.2}

In this subsection, we prove Lemma~\ref{lem:upper-core}. Let \((G_n)\) be a graph sequence with
$2e(G_n)/n^2\to \beta$.
Suppose that, for each \(n\),
$G_n=H_{s_n}\sqcup I_{n-s_n}$, 
where \(I_{n-s_n}\) is the set of isolated vertices and all degrees inside \(H_{s_n}\)
differ by at most one.  It is enough to bound an arbitrary subsequential limit. Put $\sigma_n\coloneqq s_n/n$, and pass to a subsequence on which \(\sigma_n\to \sigma\).

If \(\beta=0\), then \(e(G_n)=o(n^2)\) for large $n$. Combining \eqref{eq:Mndef} and \eqref{EQ:number-S2,1-Phi-degree} gives 
\[
N(S_{2,1},G_n)
=\sum_{v\in V(G_n)}\Phi_n(d(v))
\le n\sum_{v\in V(G_n)}d(v)^2
\le 2e(G_n)n^2
=o(n^4).
\]
We are done.

Suppose that \(\beta>0\). Note that  all edges in $G_n$ lie inside \(H_{s_n}\), which implies  \(\sigma>0\). Moreover, there is an integer \(D_n\) such that for each  \(v\in V(H_{s_n})\), $d(v)\in\{D_n,D_n-1\}$.
Hence
\[
2e(G_n)=\sum_{v\in V(H_{s_n})}d(v)=D_ns_n+O(s_n).
\]
Equivalently, 
\[
\beta+o(1)
=\frac{\sigma_n}{n}D_n+o(1).
\]
This means that  for \(v\in V(H_{s_n})\),
\begin{align}\label{eq:Gn-d(v)-beta-sigma}
\frac{d(v)}{n}=\frac{\beta}{\sigma_n}+o(1).
\end{align}

On the other hand, for \(v\in V(H_{s_n})\), we have \(d(v)\le s_n\). Hence
\[
\frac{\beta}{\sigma_n}+o(1)=\frac{d(v)}{n}\le \frac{s_n}{n}=\sigma_n,
\]
and passing to the limit gives
$\beta/\sigma\le \sigma$.
Thus every possible limit \(\sigma\) satisfies
\begin{align}\label{eq:bound-beta-sigma}
\sqrt{\beta}\le \sigma\le 1. 
\end{align}

Now we count $S_{2,1}$ in $G_n$.  The isolated vertices do not contribute to \(N(S_{2,1},G_n)\). Substituting \eqref{eq:Gn-d(v)-beta-sigma} into \eqref{EQ:number-S2,1-Phi-degree} yields that 
\begin{align}\label{eq:Gn-S21-density}
\frac{N(S_{2,1},G_n)}{n^4}
=
\frac{\beta^2}{\sigma_n}
\left(1-\frac{\beta}{\sigma_n}\right)
+o(1).
\end{align}
Define
\[
f_\beta(\sigma):=\frac{\beta^2}{\sigma}\left(1-\frac{\beta}{\sigma}\right),
\qquad \sqrt{\beta}\le \sigma\le 1.
\]
By \eqref{eq:Gn-S21-density}, every subsequential limit of \(N(S_{2,1},G_n)/n^4\)  equals
\(f_\beta(\sigma)\) for some \(\sigma\in[\sqrt{\beta},1]\). It follows from \(0<\beta\le 1/4\) that  \(2\beta\le \sqrt{\beta}\le \sigma\). We have 
\[
f'_\beta(\sigma)=\frac{\beta^2(2\beta-\sigma)}{\sigma^3}\le 0,
\]
Thus, \(f_\beta\) is decreasing on
\([\sqrt{\beta},1]\). By \eqref{eq:bound-beta-sigma}, we conclude that 
\[
f_\beta(\sigma)\le f_\beta(\sqrt{\beta})
=\beta^{3/2}(1-\sqrt{\beta})
=c(\beta), 
\]
completing the proof.

\subsection{Proof of Lemma~\ref{lem:upper-threshold-branch}}\label{subsec:prove-lemma3.3}
In this subsection, we prove Lemma~\ref{lem:upper-threshold-branch}. 
    Let $G$ be a chosen upper-extremal threshold graph.  Order its vertices as $w_1,\ldots,w_n$ so that
    $d(w_1)\ge\cdots\ge d(w_n)$.  By Observation
    \ref{prop:threshold-equal-degree},
    whenever $i<j$, the vertex $w_i$ dominates $w_j$.  Let $t$ be the largest
    index with $t\ge2$ and $w_{t-1}w_t\in E(G)$; if no such index exists, set
    $t=1$.  Put
    \begin{align*}
      K=\{w_1,\ldots,w_t\},\qquad T=V(G)\setminus K.
    \end{align*}
    Then $K$ is a clique and $T$ is independent by Observation
    \ref{prop:threshold-equal-degree}.  Order
    \begin{align*}
      K=\{h_1,\ldots,h_t\},\qquad T=\{v_1,\ldots,v_s\},
    \end{align*}
    so that
    \begin{align*}
      N(v_i)\cap K=\{h_1,\ldots,h_{b_i}\},
      \qquad t\ge b_1\ge\cdots\ge b_s\ge0.
    \end{align*}
 Let $c_q\coloneqq|\{i:b_i\ge q\}|$.  Then $d(v_i)=b_i$ and $d(h_q)=t-1+c_q$. Recall that 
\begin{align*}
 \Phi_n(k)\coloneqq k(k-1)(n-1-k), \text{ and }  M_n(k)\coloneqq\Phi_n(k+1)-\Phi_n(k)=-3k^2+(2n-3)k, 
\end{align*} 
and $N(S_{2,1},G)=\sum_{v\in V(G)} \Phi_n(d(v))$ by \eqref{eq:Mndef} and \eqref{EQ:number-S2,1-Phi-degree}.

    \begin{claim}\label{fact:threshold-exact-count}
       We have that 
        \begin{align}\label{eq:threshold-exact-upper}
          N(S_{2,1},G)=t\Phi_n(t-1)+\sum_{i=1}^s\left(\Phi_n(b_i)+b_iM_n(t+i-2)\right).
        \end{align}
    \end{claim}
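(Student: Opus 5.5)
The plan is to build $G$ edge-by-edge in a controlled order and track $\sum_v \Phi_n(d(v))$ by telescoping, using \eqref{EQ:number-S2,1-Phi-degree} and the increment function $M_n$ from \eqref{eq:Mndef}. Throughout, keep all $n$ vertices present, so that $\Phi_n$ is always evaluated with the fixed parameter $n$. Vertices of degree $0$ contribute $\Phi_n(0)=0$.

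For $0\le i\le s$, let $G_i$ be the spanning subgraph of $G$ consisting of the clique on $K$ together with all edges between $\{v_1,\ldots,v_i\}$ and $K$. Since $T$ is independent, $G_s=G$.

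In $G_0$ every $h_q$ has degree $t-1$ and every $v_j$ has degree $0$. Hence
\[
\sum_v\Phi_n(d_{G_0}(v))=t\Phi_n(t-1).
\]

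The key step is the passage from $G_{i-1}$ to $G_i$, which adds the $b_i$ edges $v_ih_1,\ldots,v_ih_{b_i}$.
\begin{enumerate}[label=\textup{(\roman*)}]
\item The degree of $v_i$ goes from $0$ to $b_i$, contributing $\Phi_n(b_i)-\Phi_n(0)=\Phi_n(b_i)$.
\item For $q\le b_i$, the degree of $h_q$ in $G_{i-1}$ is $t-1+|\{j<i:b_j\ge q\}|$. Since $b_1\ge\cdots\ge b_s$ and $q\le b_i$, every $j<i$ satisfies $b_j\ge b_i\ge q$. So this degree equals exactly $t-1+(i-1)=t+i-2$.
\item Each such $h_q$ therefore rises from $t+i-2$ to $t+i-1$, contributing $M_n(t+i-2)$. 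Over the $b_i$ affected vertices this totals $b_iM_n(t+i-2)$.
\item No other degree changes.
\end{enumerate}

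Summing the increments over $i=1,\ldots,s$ and adding the initial value $t\Phi_n(t-1)$ gives \eqref{eq:threshold-exact-upper}. As a consistency check, the final degree of $h_q$ is $t-1+c_q$, matching the stated formula.

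The only delicate point is step (ii), namely that the monotonicity $b_1\ge\cdots\ge b_s$ makes the current degree of every affected $h_q$ equal to exactly $t+i-2$, independent of $q$. This is where the nested-neighbourhood (threshold) ordering is used. Everything else is bookkeeping.
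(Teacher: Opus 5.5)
Your proof is correct and is essentially the paper's argument. The paper telescopes $\Phi_n(t-1+c_q)$ for each clique vertex $h_q$ and then interchanges the double sum via $j\le c_q\iff q\le b_j$; your insertion of $v_1,\ldots,v_s$ one at a time computes that interchanged sum directly. Your step (ii) plays the role of that equivalence and makes the use of $b_1\ge\cdots\ge b_s$ explicit.
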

    
    \begin{proof}
        By  \eqref{EQ:number-S2,1-Phi-degree}, it suffices to sum $\Phi_n(d(x))$ over
        $x\in V(G)$.  Since $d(v_i)=b_i$, the vertices in $T$ contribute
        $\sum_{i=1}^s\Phi_n(b_i)$ and the vertices in $K$ contribute
        $\sum_{q=1}^t\Phi_n(t-1+c_q)$.
        
        The definition of $M_n$ gives
        \begin{align*}
          \Phi_n(t-1+c_q)
          =\Phi_n(t-1)+\sum_{j=1}^{c_q}
            \bigl(\Phi_n(t+j-1)-\Phi_n(t+j-2)\bigr)
          =\Phi_n(t-1)+\sum_{j=1}^{c_q}M_n(t+j-2).
        \end{align*}
        Therefore
        \begin{align*}
          \sum_{q=1}^t\Phi_n(d(h_q))
          =t\Phi_n(t-1)+\sum_{q=1}^t\sum_{j=1}^{c_q}M_n(t+j-2).
        \end{align*}
        For fixed $j$, the condition $j\le c_q$ is equivalent to $q\le b_j$, that is,
        to $h_qv_j\in E(G)$.  Thus, after interchanging the order of summation, the term
        $M_n(t+j-2)$ occurs exactly $b_j$ times.  Hence
        \begin{align*}
          \sum_{q=1}^t\sum_{j=1}^{c_q}M_n(t+j-2)
          =\sum_{j=1}^s b_jM_n(t+j-2).
        \end{align*}
        Combining the contributions from $K$ and $T$ proves
        \eqref{eq:threshold-exact-upper}.
    \end{proof}

    Writing $z=t/n$ and $u_i=b_i/n$, Claim \ref{fact:threshold-exact-count} gives that 
    \begin{align}\label{eq:threshold-asymp-upper}
      \frac{N(S_{2,1},G)}{n^4}=z^3(1-z)+\frac1n\sum_{i=1}^s\left[u_i^2(1-u_i)+u_i\left(z+\frac{i}{n}\right)\left(2-3\left(z+\frac{i}{n}\right)\right)\right]+O(1/n).
    \end{align}
Counting the number of edge in $G$ yields that $e(G)=\binom t2+\sum_{i=1}^s b_i$. This together with  $t=zn$ and $b_i=u_in$ gives 
    \begin{align}\label{eq:threshold-edge-upper}
      \frac{2e(G)}{n^2}
      =\frac{t(t-1)}{n^2}+\frac{2}{n^2}\sum_{i=1}^s b_i
      =z^2+\frac{2}{n}\sum_{i=1}^s u_i+O(1/n).
    \end{align}
    
    \begin{claim}\label{lem:capped-simplex}
        Fix $s\in\N$, $z\ge0$, and $0\le A\le sz$.  Let
        \begin{align*}
          P_{s,A}(z)\coloneqq\{\mathbf{u}=(u_1,\ldots,u_s)\in\R^s:
          z\ge u_1\ge\cdots\ge u_s\ge0,
          \quad \sum_{i=1}^s u_i=A\}.
        \end{align*}
        An $s$-tuple $\mathbf{u}\in P_{s,A}(z)$ is not the midpoint of two distinct
        vectors of $P_{s,A}(z)$ if and only if it has the form
        \begin{align}\label{eq:PsA-extreme-form}
          u_1=\cdots=u_a=z,
          \qquad
          u_{a+1}=\cdots=u_b=h,
          \qquad
          u_{b+1}=\cdots=u_s=0,
        \end{align}
        for some $0\le a\le b\le s$ and $0\le h\le z$, with the sum constraint
        $\sum_i u_i=A$.
    \end{claim}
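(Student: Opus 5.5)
We prove both directions of the characterization directly, using only the ordering constraints $z\ge u_1\ge\cdots\ge u_s\ge 0$ and the linear constraint $\sum_i u_i=A$. Throughout, a point of $P_{s,A}(z)$ that is not the midpoint of two distinct points of $P_{s,A}(z)$ is called \emph{extreme}.

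\emph{Sufficiency.} Let $\mathbf u$ have the form \eqref{eq:PsA-extreme-form}, and suppose $\mathbf u=\tfrac12(\mathbf v+\mathbf w)$ with $\mathbf v,\mathbf w\in P_{s,A}(z)$.

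\begin{itemize}
\item For $i\le a$ we have $u_i=z$. Since $v_i,w_i\le z$ and their average is $z$, this forces $v_i=w_i=z$.
\item For $i>b$ we have $u_i=0$, and since $v_i,w_i\ge0$, the same argument gives $v_i=w_i=0$.
\item On the middle block $a<i\le b$, monotonicity of $\mathbf v$ and $\mathbf w$ gives $v_{a+1}\ge\cdots\ge v_b$ and $w_{a+1}\ge\cdots\ge w_b$. Their averages are all equal to $h$.
\end{itemize}

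Put $\delta_i=v_i-h$ for $a<i\le b$, so that $w_i-h=-\delta_i$. Then $\delta_i$ is nonincreasing in $i$ (from $\mathbf v$) and also nondecreasing in $i$ (from $\mathbf w$), so $\delta_i=\delta$ is constant on the block.

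The sum constraint, together with the equalities already established outside the block, gives $(b-a)\delta=0$. Hence $\delta=0$ whenever the block is nonempty, and so $\mathbf v=\mathbf w=\mathbf u$. Therefore $\mathbf u$ is extreme.

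\emph{Necessity.} Suppose $\mathbf u\in P_{s,A}(z)$ has at least two distinct values lying strictly in $(0,z)$. We show that $\mathbf u$ is not extreme.

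Group the coordinates into maximal constant blocks. Pick two distinct blocks $B_1$ and $B_2$ whose common values $p>q$ both lie in $(0,z)$. Define the perturbation
$$\mathbf d=|B_2|\,\ind_{B_1}-|B_1|\,\ind_{B_2},$$
which is constant on each block and satisfies $\sum_i d_i=0$.

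We check that $\mathbf u\pm\varepsilon\mathbf d\in P_{s,A}(z)$ for all sufficiently small $\varepsilon>0$:
\begin{itemize}
\item The sum is preserved, since $\sum_i d_i=0$.
\item The bounds $0\le u_i\le z$ persist, because $p$ and $q$ lie strictly inside $(0,z)$ and only the coordinates in $B_1\cup B_2$ move.
\item The order is preserved. Within each block the coordinates move together. Between adjacent blocks the values differ strictly, and this strict gap survives a small perturbation.
\end{itemize}
Thus $\mathbf u=\tfrac12\bigl((\mathbf u+\varepsilon\mathbf d)+(\mathbf u-\varepsilon\mathbf d)\bigr)$ with $\mathbf d\ne0$, so $\mathbf u$ is not extreme.

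It follows that every extreme point takes at most one value strictly between $0$ and $z$. By monotonicity, its coordinates therefore consist of a run of $z$'s, then a run of a single value $h$, then a run of $0$'s. This is exactly the form \eqref{eq:PsA-extreme-form}. The degenerate case $z=0$ is trivial, since then $P_{s,A}(z)=\{\mathbf 0\}$.

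The only step requiring care is checking that the perturbation preserves monotonicity. This is why $\mathbf d$ is chosen to be constant on entire maximal blocks rather than on individual coordinates.
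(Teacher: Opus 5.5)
Your proof is correct and follows the paper's proof essentially step for step. For necessity, you perturb two maximal equal-value blocks with values in $(0,z)$ in opposite directions with weights chosen to keep the sum fixed; this is the paper's construction up to scaling. For sufficiency, you use the bounds to pin the $z$- and $0$-coordinates, then the two opposite monotonicities to make the middle block constant, and finally the sum constraint to force $\mathbf v=\mathbf w=\mathbf u$, exactly as the paper does.
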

    
    \begin{proof}
        Let $B_1,\ldots,B_m$ be the maximal equal-value blocks of $\mathbf{u}$, with
        block values $\lambda_1>\cdots>\lambda_m$.  If two block values
        $\lambda_\alpha,\lambda_\gamma$ lie in $(0,z)$, choose $\varepsilon>0$ small
        enough and define
        \begin{align*}
          x_i=
          \begin{cases}
            u_i+\varepsilon, & i\in B_\alpha,\\
            u_i-\dfrac{|B_\alpha|}{|B_\gamma|}\varepsilon, & i\in B_\gamma,\\
            u_i, & \text{otherwise}.
          \end{cases}
        \end{align*}
        Then $\mathbf{x}=(x_1,\ldots,x_s)\in P_{s,A}(z)$, and for $\mathbf{y}\coloneqq2\mathbf{u}-\mathbf{x}$
        we also have $\mathbf{y}\in P_{s,A}(z)$ and $\mathbf{x}\ne\mathbf{y}$.  Thus
        $\mathbf{u}$ is the midpoint of two distinct feasible vectors.  Hence a vector
        which is not such a midpoint has at most one equal-value block with value in
        $(0,z)$.  Monotonicity then forces all $z$-coordinates to form a prefix and all
        $0$-coordinates to form a suffix, giving the form in \eqref{eq:PsA-extreme-form}.
        
        Conversely, assume that $\mathbf{u}$ has the form in \eqref{eq:PsA-extreme-form} and satisfies 
        \begin{align}\label{eq:midpoint-decomp}
          2\mathbf{u}=\mathbf{x}+\mathbf{y},
          \qquad
          \mathbf{x},\mathbf{y}\in P_{s,A}(z).
        \end{align}
        The bounds force $x_i=y_i=z$ for $i\le a$ and $x_i=y_i=0$ for $i>b$.  On the
        middle block, $x_i+y_i=2h$.  Since $\mathbf{x}$ is nonincreasing and
        $\mathbf{y}$ is nonincreasing, the sequence $(x_i)_{a<i\le b}$ is both
        nonincreasing and nondecreasing.  Hence
        $x_{a+1}=\cdots=x_b$ and $y_{a+1}=\cdots=y_b$.  If $b>a$, then the sum
        constraints give
        \begin{align*}
          za+x_{a+1}(b-a)=A=za+y_{a+1}(b-a),
        \end{align*}
        so $x_{a+1}=y_{a+1}$.  Together with $x_i+y_i=2h$, this gives
        $x_i=y_i=h$ on the middle block.  If $b=a$, there is no middle block.  Thus
        $\mathbf{x}=\mathbf{y}=\mathbf{u}$, so the midpoint representation is trivial.
    \end{proof}
    
    By \eqref{eq:threshold-edge-upper}, $z^2\le\beta+o(1)$, and hence $z\le1/2+o(1)$.  For $0\le u\le z$,
    \begin{align}\label{eq:u2-bound-z}
      u^2(1-u)\le u z(1-z)+u\max\{z-1/2,0\}^2.
    \end{align}
    Indeed, if $z\le1/2$, then $u(1-u)\le z(1-z)$; if $z>1/2$, then
    $u(1-u)\le1/4=z(1-z)+(z-1/2)^2$.  Since
    $\max\{z-1/2,0\}=o(1)$, applying  \eqref{eq:u2-bound-z} with $u=u_i$ for
    $i=1,\ldots,s$ gives the error term
    \begin{align*}
      \frac1n\sum_{i=1}^s u_i\max\{z-1/2,0\}^2
      =\max\{z-1/2,0\}^2\frac1n\sum_{i=1}^s u_i=o(1),
    \end{align*}
    since $0\le u_i\le1$ and $s\le n$.
    
    Assume first that $z\ge1/3$.  The tail contribution in \eqref{eq:threshold-asymp-upper} is then at most
    \begin{align*}
      \frac1n\sum_{i=1}^s \omega_i u_i+o(1),
    \end{align*}
    where
    \begin{align*}
      \omega_i\coloneqq z(1-z)+\left(z+\frac{i}{n}\right)\left(2-3\left(z+\frac{i}{n}\right)\right).
    \end{align*}
    Since $x(2-3x)$ is strictly decreasing for $x\ge1/3$, we have
    $\omega_i>\omega_j$ whenever $i<j$.   If $i<j$ and $u_j>0$, then
    moving any sufficiently small $\varepsilon\le u_j$ from $u_j$ to
    $u_i$ changes $\sum_i\omega_i u_i$ by
    \begin{align*}
      \varepsilon(\omega_i-\omega_j)>0.
    \end{align*}
    Since each $u_i\le z$, the maximum is
    therefore attained at a vector of the form
    \begin{align*}
      z,\ldots,z,\theta,0,\ldots,0.
    \end{align*}
    There is a $t$-clique with $t=zn+o(n)$.  Suppose that the vector
    $(u_1,\ldots,u_s)$ has $\xi n+o(n)$ coordinates equal to $z$.  Then
    $\xi n+o(n)$ tail vertices are complete to the $t$-clique, and the single
    partial coordinate $\theta$ gives one tail vertex adjacent to
    $\theta n+o(n)$ vertices of the $t$-clique.  Hence, in our normalization,
    \begin{align*}
      \beta_G
      =\frac{2}{n^2}\left(\binom{t}{2}+(\xi n+o(n))t+\theta n+o(n)\right)
      =z^2+2\xi z+o(1).
    \end{align*}
    Passing to the limit gives $\beta=z^2+2\xi z$.  The partial coordinate affects
    $N(S_{2,1},G)/n^4$ by only $O(1/n)$, and the corresponding normalized
    $S_{2,1}$-count satisfies
    \begin{align*}
      \frac{N(S_{2,1},G)}{n^4}
      &=\frac{1}{n^4}\left((\xi n+o(n))\Phi_n(t)
        +t\Phi_n(t+\xi n+o(n))\right)\\
      &
      \le \xi z^2(1-z)+z(\xi+z)^2(1-\xi-z)+o(1).
    \end{align*}
    Since $2\xi z+z^2=\beta$, the pair $(x,y)=(\xi,z)$ is feasible in the
    definition of $s(\beta)$.  Hence, after taking the limit superior, this branch contributes at most $s(\beta)$.
    
    Assume next that $z<1/3$.  Put $A\coloneqq\sum_i u_i$ and
    \begin{align*}
      T_z(\mathbf{u})\coloneqq
      \frac1n\sum_{i=1}^s\left[u_i^2(1-u_i)
      +u_i\left(z+\frac{i}{n}\right)\left(2-3\left(z+\frac{i}{n}\right)\right)\right].
    \end{align*}
    For $0\le u_i\le z\le1/3$,
    \begin{align*}
      \frac{\partial^2 T_z}{\partial u_i^2}=\frac1n(2-6u_i)\ge0,
    \end{align*}
    so $T_z$ is convex on $P_{s,A}(z)$.  Choose a maximizer
    $\mathbf{u}\in P_{s,A}(z)$ with no representation
    $2\mathbf{u}=\mathbf{x}+\mathbf{y}$ by distinct
    $\mathbf{x},\mathbf{y}\in P_{s,A}(z)$; otherwise
    \begin{align*}
      T_z(\mathbf{u})\le \frac{T_z(\mathbf{x})+T_z(\mathbf{y})}2
    \end{align*}
    and one may replace $\mathbf{u}$ by $\mathbf{x}$ or $\mathbf{y}$.  Claim
    \ref{lem:capped-simplex} gives
    \begin{align*}
      \mathbf{u}=(\underbrace{z,\ldots,z}_{(p-z)n+o(n)},
      \underbrace{h,\ldots,h}_{(q-p)n+o(n)},0,\ldots,0),
    \end{align*}
    where the first block consists of tail vertices complete to the clique $K$, the
    second block consists of tail vertices with $hn+o(n)$ neighbors in $K$, and the
    remaining tail vertices are anti-complete to $K$.  Thus
    \begin{align*}
      0\le h\le z\le1/3,
      \qquad
      z\le p\le q\le1,
    \end{align*}
    and the nonzero-degree vertex classes give
    \begin{align*}
      \frac{N(S_{2,1},G)}{n^4}
      &=
      \frac{1}{n^4}\Bigl(((p-z)n+o(n))\Phi_n(zn+o(n))
      +((q-p)n+o(n))\Phi_n(hn+o(n))\\
      &\qquad\qquad
      +(hn+o(n))\Phi_n(qn+o(n))
      +((z-h)n+o(n))\Phi_n(pn+o(n))\Bigr)\\
      &=(p-z)z^2(1-z)+(q-p)h^2(1-h)+hq^2(1-q)
      +(z-h)p^2(1-p)+o(1).
    \end{align*}
    Set
    \begin{align}\label{eq:Fplus}
      F_+(p,q,h,z)\coloneqq
      (p-z)z^2(1-z)+(q-p)h^2(1-h)+hq^2(1-q)+(z-h)p^2(1-p),
    \end{align}
    and, using $t=zn+o(n)$, the edge density is
    \begin{align}\label{eq:Fplus-beta}
      \beta_G
      &=\frac{2}{n^2}\left(\binom{t}{2}
      +((p-z)n+o(n))t+((q-p)n+o(n))(hn+o(n))\right)\notag\\
      &=z^2+2z(p-z)+2h(q-p)+o(1).
    \end{align}
    Passing to the limit gives
    \begin{align*}
      \beta=2zp-z^2+2h(q-p).
    \end{align*}
    
    For fixed $(\beta,q,z)$, and for $p<q$, the edge equation determines
    \begin{align*}
      h(p)=\frac{\beta-2zp+z^2}{2(q-p)}.
    \end{align*}
    The feasible $p$-fiber is the closure in $[z,q]$ of those $p\in[z,q)$ for which $0\le h(p)\le z$.  On every relative-open part of the fiber we have $z<p<q$ and $0<h<z$.  With $\beta,q,z$ fixed,
    \begin{align}\label{eq:h-fiber-derivatives}
      h'(p)=-\frac{z-h}{q-p},
      \qquad
      h''(p)=-\frac{2(z-h)}{(q-p)^2}.
    \end{align}
    Differentiating $F_+(p,q,h(p),z)$ twice with respect to $p$, and using \eqref{eq:h-fiber-derivatives}, gives
    \begin{align}\label{eq:Fplus-second}
      \frac{d^2}{dp^2}F_+(p,q,h(p),z)=\frac{2(z-h(p))}{q-p}\left((q-p)^2+(z-h(p))(1-3h(p))\right)\ge0.
    \end{align}
    Hence no relative-interior point of this one-parameter fiber can give a strict maximum.  Therefore, for fixed $(\beta,q,z)$, a maximum on this $p$-fiber is attained when
    \begin{align*}
      h=0,
      \qquad h=z,
      \qquad p=q,
      \qquad\text{or}\qquad p=z.
    \end{align*}
    The first three faces reduce to split constructions feasible in the definition of $s(\beta)$.  Explicitly, if $h=0$, then only the full-neighborhood
    tail block of length $p-z$ remains, giving split parameters
    $(x,y)=(p-z,z)$.  If $h=z$, then the full and plateau blocks merge, giving
    $(x,y)=(q-z,z)$.  If $p=q$, the plateau has zero length and again gives
    $(x,y)=(p-z,z)$.  It remains to discuss the mixed face $p=z$.
    
    On this face write $q=z+\eta$.  Then \eqref{eq:Fplus-beta} gives
    $\beta=z^2+2h\eta$, and \eqref{eq:Fplus} becomes
    \begin{align}\label{eq:mixed-R}
      R(h,\eta,z)\coloneqq\eta h^2(1-h)+h(z+\eta)^2(1-z-\eta)+(z-h)z^2(1-z)
    \end{align}
    for the limiting $S_{2,1}$-value.  Therefore, to prove that no point on the
    mixed face exceeds the split branch, it is enough to maximize $R$ over the
    compact mixed domain
    \begin{align*}
      D_\beta=\{(h,\eta,z):0\le h\le z\le1/3,
      \ 0\le \eta\le1-z,
      \ z^2+2h\eta=\beta\}.
    \end{align*}
    If $z=0$, then $h=0$ and $\beta=0$, so this is the zero-density split degeneration.  Hence assume $z>0$.  Let 
    \begin{align*}
      x=\frac{\beta-z^2}{2z}=\frac{h\eta}{z}=\mu\eta\le\eta,
    \end{align*}
    where $\mu=h/z\in[0,1]$.
    Thus $x\ge0$ and $x+z\le\eta+z\le1$.  Consider the split graph with a clique
    of size $zn+o(n)$ and an independent class of size $xn+o(n)$ complete to this
    clique, all remaining vertices being isolated.  Its edge density is
    $z^2+2xz=\beta$, and
    \begin{align*}
      \frac{N(S_{2,1},G)}{n^4}
      &=
      \frac{1}{n^4}\left((xn+o(n))\Phi_n(zn+o(n))
      +(zn+o(n))\Phi_n((x+z)n+o(n))\right)\\
      &=xz^2(1-z)+z(x+z)^2(1-x-z)+o(1).
    \end{align*}
    Set
    \begin{align*}
      S_\beta(z)\coloneqq xz^2(1-z)+z(x+z)^2(1-x-z).
    \end{align*}
    For $h>0$, write $h=\mu z$, so $\eta=x/\mu$.  Then
    \begin{align*}
      R(\mu z,x/\mu,z)
      &=\mu z\left(1-z-\frac{x}{\mu}\right)
      \left(z+\frac{x}{\mu}\right)^2
      +x\mu z^2(1-\mu z)+(1-z)z^2(z-\mu z).
    \end{align*}
    Therefore
    \begin{align}\label{eq:mixed-diff}
      S_\beta(z)-R(h,\eta,z)
      &=zx^3(\mu^{-2}-1)+zx^2(1-\mu^{-1})
        +3z^2x^2(\mu^{-1}-1)+xz^2(1-\mu)+xz^3(\mu^2-1)\notag\\
      &=\frac{xz(1-\mu)}{\mu^2}P(\mu,z,x),
    \end{align}
    where
    \begin{align}\label{eq:Pmixed}
      P(\mu,z,x)=(\mu+1)x^2+\mu(3z-1)x+\mu^2z-\mu^2z^2-\mu^3z^2.
    \end{align}
    We first cover the boundary of $D_\beta$.  The case $z=0$ was handled above.
    If $\eta=0$, or if $h=0$ or $h=z$ (equivalently $\mu=1$ in the last case), then this is a split construction,
    possibly with zero density.  If $\eta=1-z$, equivalently $q=1$ and
    $x=\mu(1-z)$, then
    \begin{align*}
      P(\mu,z,\mu(1-z))=\mu^2\bigl((1-2z)\mu+3z(1-z)\bigr)\ge0.
    \end{align*}
    If $z=1/3$, then
    \begin{align*}
      P(\mu,1/3,x)=(\mu+1)x^2+\frac{\mu^2}{3}\left(1-\frac{1+\mu}{3}\right)\ge0.
    \end{align*}
    Since the factor $xz(1-\mu)/\mu^2\ge0$ in \eqref{eq:mixed-diff}, the case
    $\eta=1-z$ and the case $z=1/3$ give
    \begin{align*}
      S_\beta(z)-R(h,\eta,z)\ge0.
    \end{align*}
    
    At a non-boundary maximum of $R$ on $D_\beta$, we have
    \begin{align*}
      0<\mu<1,
      \qquad 0<z<1/3,
      \qquad 0<x<\mu(1-z).
    \end{align*}
    At such an interior constrained maximum, the Lagrange multiplier equations for
    \begin{align*}
      g(h,\eta,z)\coloneqq z^2+2h\eta-\beta=0
    \end{align*}
    are
    \begin{align*}
      R_h=\lambda\partial g/\partial h=2\lambda\eta,
      \qquad
      R_\eta=\lambda\partial g/\partial \eta=2\lambda h,
      \qquad
      R_z=\lambda\partial g/\partial z=2\lambda z.
    \end{align*}
    Here $R_h=\partial R/\partial h$, $R_\eta=\partial R/\partial\eta$, and
    $R_z=\partial R/\partial z$, namely
    \begin{align}\label{eq:R-partials}
      R_h&=\eta(2h-3h^2)+(z+\eta)^2(1-z-\eta)-z^2(1-z),\\
      R_\eta&=h^2(1-h)+h(z+\eta)(2-3z-3\eta),\\
      R_z&=h(z+\eta)(2-3z-3\eta)+z^2(1-z)+(z-h)(2z-3z^2).
    \end{align}
    Since $h(2\lambda\eta)=\eta(2\lambda h)$ and
    $z(2\lambda h)=h(2\lambda z)$, we have $hR_h=\eta R_\eta, zR_\eta=hR_z.$
    Substituting  \eqref{eq:R-partials} gives
    \begin{align*}
      hR_h-\eta R_\eta
      &=h\eta\left(h(1-2h)+\eta(3z+2\eta-1)\right)=0,\\
      zR_\eta-hR_z
      &=h(z-h)\left(zh-z+z^2+2\eta-6z\eta-3\eta^2\right)=0.
    \end{align*}
     Now substitute
    $h=\mu z, x=\mu\eta, \eta=x/\mu.$
    Then
    \begin{align*}
      hR_h-\eta R_\eta
      =zx\left[
        \mu z(1-2\mu z)+\frac{x}{\mu}
          \left(3z+\frac{2x}{\mu}-1\right)\right]
      =\frac{zx}{\mu^2}
        \left(2x^2+\mu(3z-1)x+\mu^3z(1-2\mu z)\right),
    \end{align*}
    and
    \begin{align*}
      zR_\eta-hR_z
      &=-\frac{z^2(1-\mu)}{\mu}
        \left(3x^2+2\mu(3z-1)x+\mu^2z(1-(\mu+1)z)\right).
    \end{align*}
    Note that $h,\eta, \mu>0$. We obtain
    \begin{align}
      E_1&\coloneqq 2x^2+\mu(3z-1)x+\mu^3z(1-2\mu z)=0,\label{eq:E1}\\
      E_2&\coloneqq 3x^2+2\mu(3z-1)x+\mu^2z(1-(\mu+1)z)=0.\label{eq:E2}
    \end{align}
    Matching coefficients gives the polynomial identity
    \begin{align}\label{eq:mixed-certificate}
      P(\mu,z,x)-\mu^3z(\mu-1)(4\mu z+z-2)
      =(2\mu-1)E_1+(1-\mu)E_2=0.
    \end{align}
    Since $0<\mu<1$ and $0<z<1/3$, both $\mu-1$ and $4\mu z+z-2$ are negative.
    Therefore $P>0$.  By \eqref{eq:mixed-diff},
    $S_\beta(z)-R(h,\eta,z)\ge0$, and thus the threshold branch never exceeds
    $s(\beta)$ for $\beta\le1/4$.

\section{Proof of Theorem \ref{thm:lower}}\label{sec:lower}

In this section we prove Theorem \ref{thm:lower}.  The proof has two parts: a degree-square estimate for the non-threshold branch, and a staircase calculation for the threshold branch.

\subsection{Auxiliary statements for the lower bound}\label{subsec:auxiliary-statements}
In this subsection, we set up the lower-extremal framework and state the three 
auxiliary lemmas used in the proof of Theorem~\ref{thm:lower}.
Recall the definitions of $J_\beta$, $h_\beta$, and $C_\beta$ from
\eqref{eq:Jbeta-hbeta} and \eqref{eq:Cbeta}.  Throughout this section put   
\begin{align*}
  \psi(t)\coloneqq t^2(1-t),
  \qquad
  M(\beta)\coloneqq\min_{z\in J_\beta}C_\beta(z).
\end{align*}
The function $M(\beta)$ is continuous on $[0,1]$.  For $0\le\beta<1$, this follows from continuity of $C_\beta(z)$ on the compact interval $J_\beta$, whose endpoints vary continuously with $\beta$.  At $\beta=1$, Fact \ref{fact:qs-endpoint} gives $0\le  M(\beta)\le cc(\beta)\to0=M(1)$.

Fix $n,m$. Among all $n$-vertex $m$-edge graphs minimizing $N(S_{2,1},G)$, choose one maximizing $\norm{G}_2$. We call it a \emph{chosen lower-extremal graph}.  The following three lemmas play a key role in our proof. We will prove Lemma \ref{prop:lower-dichotomy} in Subsection \ref{subsec:prove-lemma4.1}, Lemma \ref{lem:cauchy-branch} in Subsection \ref{subsec:prove-lemma4.2} and Lemma \ref{prop:threshold-lower} in Subsection \ref{subsec:prove-lemma4.3}. 

\begin{lemma}\label{prop:lower-dichotomy}
Let $G$ be a chosen lower-extremal graph.  Let
\begin{align*}
  S\coloneqq\{v:d(v)\le\delta(G)+1\},
  \qquad L\coloneqq V(G)\setminus S.
\end{align*}
Then either
$
  |L|\le\delta(G)+1,
$
or $G$ is a threshold graph.
\end{lemma}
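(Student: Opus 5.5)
The plan is to first prove a lower analogue of Lemma~\ref{lem:upper-transfer}, and then use it to check that neighbourhoods are nested pair by pair. Let $G$ be a chosen lower-extremal graph and let $u,v\in V(G)$ with $d(u)\le d(v)$.

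\textbf{The two transfer rules.}
\begin{enumerate}[label=\textup{(\alph*$'$)}]
\item If $d(u)+d(v)\ge 2n/3$, then $v$ dominates $u$.
\item If $d(u)+d(v)<2n/3$, then $d(v)-d(u)\le 1$.
\end{enumerate}
Both follow from Lemma~\ref{lem:transfer}, with the roles of the two factors reversed compared with the upper case.
\begin{itemize}
\item For (a$'$), suppose some $w\in N(u)\setminus N[v]$ exists and move $uw$ to $vw$. The change $(d(u)-d(v)-1)(3d(u)+3d(v)-2n)$ is then $\le 0$. Since $G$ minimizes the count, the new graph is also a minimizer. Its $\normsq{\cdot}$ is strictly larger by Lemma~\ref{lem:transfer}(ii). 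This contradicts the tie-breaking choice of $G$.
\item For (b$'$), suppose $d(v)\ge d(u)+2$. Moving some $vw$ with $w\notin N[u]$ to $uw$ changes the count by $(d(v)-d(u)-1)(3d(u)+3d(v)-2n)<0$. This contradicts minimality.
\end{itemize}

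\textbf{Pairs involving $L$.} Let $u_0$ be a vertex of degree $\delta=\delta(G)$, and assume $L\neq\emptyset$.
\begin{itemize}
\item Every $l\in L$ has $d(l)\ge\delta+2$. By (b$'$) applied to the pair $(u_0,l)$ we get $\delta+d(l)\ge 2n/3$, so $d(l)\ge 2n/3-\delta$.
\item For any $x\in V(G)$ and $l\in L$ this gives $d(x)+d(l)\ge \delta+2n/3-\delta=2n/3$. By (a$'$), the higher-degree vertex of the pair dominates the other.
\end{itemize}
So every pair with at least one vertex in $L$ is comparable under domination, in the direction of degree. In particular the neighbourhoods of the vertices of $L$ are nested. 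Propagating edges as in the proof of Claim~\ref{claim-G-threshold-R-nonemptyset} should then show that any $l\in L$ having a neighbour in $S$ is adjacent to every vertex of $L$ of degree at least $d(l)$.

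\textbf{Pairs inside $S$.} Here all degrees lie in $\{\delta,\delta+1\}$. If $d(s)+d(s')\ge 2n/3$, then (a$'$) applies directly. The remaining case is $d(s)+d(s')<2n/3$, and this is where the hypothesis $|L|\ge\delta+2$ must be used.
\begin{itemize}
\item Since $d(s)\le\delta+1<|L|$, each $s\in S$ has a non-neighbour $l_s\in L$.
\item Since $l_s$ dominates $s$, we get $N(s)\subseteq N(l_s)$.
\item Suppose $s,s'\in S$ are incomparable, witnessed by $w\in N(s)\setminus N[s']$ and $w'\in N(s')\setminus N[s]$. I would first try to show that $w,w'$ can be taken in $L$. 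Then the nestedness of $N(w)$ and $N(w')$ from the previous step should give a contradiction. Concretely, $w$ and $w'$ are comparable, say $w'$ dominates $w$. Then $s\in N(w)\setminus\{w'\}\subseteq N(w')$, contradicting $w'\notin N[s]$.
\item If the witnesses lie in $S$, I would instead consider a $2$-switch $G-sw-s'w'+sw'+s'w$. This preserves all degrees, hence preserves $N(S_{2,1},G)$ and $\normsq{G}$. One would then use it, together with the abundance of non-neighbours $l_s\in L$, to reroute edges so that (a$'$) yields a strict contradiction.
\end{itemize}

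\textbf{Expected obstacle.} The main difficulty is this last case: incomparable pairs inside $S$ whose degree sum is below $2n/3$. There the transfer rules give no direct information, and everything hinges on showing that $|L|\ge\delta+2$ forces the witnesses into $L$ (or rules out $S$–$S$ edges entirely). My first attempt would be to show that $S$ is independent when $|L|\ge\delta+2$. The idea is to take an edge $ss'$ inside $S$ and use that $l_s$ dominates $s'$, hence $s\in N(l_s)$, a contradiction. If this works, every witness lies in $L$ and the nested-neighbourhood argument above finishes, with the resulting linear order giving a threshold ordering of $G$.
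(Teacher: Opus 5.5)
Your argument is correct and is essentially the paper's proof. It uses the same two lower transfer rules, the bound $d(l)+\delta(G)\ge 2n/3$ for $l\in L$, and the same key step that $S$ is independent once $|L|\ge\delta(G)+2$; your non-neighbour $l_s$ dominating $s'$ is just a pointwise version of the paper's observation that an $S$--$S$ edge would make one endpoint adjacent to all of $L$. After that, all neighbours of $S$ lie in $L$ and the nesting follows. The $2$-switch fallback and the edge-propagation remark are unnecessary, and ordering all vertices by nonincreasing degree turns your pairwise comparability into the required threshold ordering.
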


The first alternative in Lemma~\ref{prop:lower-dichotomy} is handled by a degree-square 
estimate

\begin{lemma}\label{lem:cauchy-branch}
Let $(G_n)$ be a sequence of chosen lower-extremal graphs with $2e(G_n)/n^2\to\beta$. For each $G_n$, let 
\begin{align*}
    S_n\coloneqq\{v: d(v)\le\delta(G_n)+1\},\quad L_n\coloneqq V(G_n)\setminus S_n.
\end{align*} 
If $|L_n|\le\delta(G_n)+1$ for all $n$, then
\begin{align*}
 \liminf_{n\to\infty} \frac{N(S_{2,1},G_n)}{n^4}\ge M(\beta).
\end{align*}
\end{lemma}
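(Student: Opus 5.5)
The plan is to prove the stronger bound $\liminf_n N(S_{2,1},G_n)/n^4\ge\cc(\beta)$. By \eqref{eq:M-le-cc} in Fact~\ref{fact:qs-endpoint} we have $\cc(\beta)\ge M(\beta)$, so this suffices. The point is that the hypothesis $|L_n|\le\delta(G_n)+1$ makes the graph look, degree-wise, like a "quasi-star" relaxation, and a one-line pointwise inequality for $\psi(t)=t^2(1-t)$ then gives the quasi-star value.

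\emph{Setup.} Pass to a subsequence realizing the $\liminf$ along which
\begin{align*}
  \delta(G_n)/n\to a\qquad\text{and}\qquad |L_n|/n\to\ell .
\end{align*}
The hypothesis gives $\ell\le a$. Every $v\in S_n$ has $d(v)/n=a+o(1)$. Every $v\in L_n$ has $y_v\coloneqq d(v)/n\in[a,1]$ up to $o(1)$. Then \eqref{EQ:number-S2,1-Phi-degree} gives
\begin{align*}
  \frac{N(S_{2,1},G_n)}{n^4}=(1-\ell)\psi(a)+\frac1n\sum_{v\in L_n}\psi(y_v)+o(1),
  \qquad
  \beta=(1-\ell)a+\frac1n\sum_{v\in L_n}y_v+o(1).
\end{align*}
Here $\psi$ is uniformly continuous on $[0,1]$, which makes the $o(1)$ replacements routine.

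\emph{Key inequality.} For $a\le y\le 1$ we have $\psi(y)=y^2(1-y)\ge a^2(1-y)$. This is simply the chord bound from the point $(1,0)$, and it holds regardless of the convex/concave shape of $\psi$. Summing it over $L_n$ and substituting $\frac1n\sum_{L_n}y_v=\beta-(1-\ell)a+o(1)$ gives
\begin{align*}
  \frac{N(S_{2,1},G_n)}{n^4}
  &\ge(1-\ell)a^2(1-a)+a^2\bigl(\ell-\beta+(1-\ell)a\bigr)+o(1)\\
  &=a^2(1-\beta)+o(1).
\end{align*}

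\emph{Feasibility.} Since each $y_v\le1$ and $\ell\le a$,
\begin{align*}
  \beta\le(1-\ell)a+\ell=a+\ell(1-a)\le 2a-a^2 .
\end{align*}
The map $a\mapsto 2a-a^2$ is increasing on $[0,1]$, so $a\ge 1-\sqrt{1-\beta}=z_\beta$. Hence the limit is at least $z_\beta^2(1-\beta)=\cc(\beta)\ge M(\beta)$. The case $\beta=1$ is trivial, since $M(1)=0$.

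The main obstacle I expect is bookkeeping rather than ideas. One has to check carefully that the vertices of $S_n$ with degree $\delta+1$ and the vertices of $L_n$ with degree only slightly above $\delta+1$ are harmlessly absorbed into the $o(1)$ terms. One also has to check that $\ell\le a$ survives the limit from $|L_n|\le\delta(G_n)+1$; it does, since the $+1$ is $o(n)$. If the pointwise chord bound turned out to be too lossy somewhere, the fallback would be to minimize $\sum_{L}\psi(y_v)$ over the polytope of degree vectors via the lower convex envelope of $\psi$ on $[a,1]$. However, the chord inequality above already gives exactly the quasi-star value, so I expect no such refinement to be needed.
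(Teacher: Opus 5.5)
Your proposal is correct, and it reaches the same intermediate bound as the paper, $N(S_{2,1},G_n)/n^4\ge a^2(1-\beta)-o(1)$ with $a$ the limit of $\delta(G_n)/n$, by a shorter route.

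\textbf{How the routes differ.} The paper first uses the chord inequality $y(1-y)\ge\alpha(1-y)$ on $[\alpha,1]$ to show $\beta_{G_n}-\sigma(G_n)\ge\alpha(1-\beta_{G_n})$. It then applies Cauchy's inequality with weights $n-d(v)$ to get
\[
N(S_{2,1},G_n)/n^4\ge(\beta-\sigma)^2/(1-\beta)-o(1)\ge\alpha^2(1-\beta)-o(1).
\]
Your pointwise bound $y^2(1-y)\ge a^2(1-y)$ does both steps at once.

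\textbf{What your route buys.} You never divide by $1-\beta$, so you avoid the paper's separate $\varepsilon$-argument for $\beta$ near $1$. You also compare with $\cc$ at the limiting $\beta$ and apply Fact~\ref{fact:qs-endpoint} there. So you do not need the continuity of $M$ that the paper invokes after working with $\cc(\beta_{G_n})$. The feasibility step $\beta\le 2a-a^2$ is the same as in the paper.

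\textbf{A small simplification.} Since $d(v)\ge\delta(G_n)$ for every vertex, you can sum the pointwise bound over all of $V(G_n)$, with $a_n=\delta(G_n)/n$. This gives $\frac1n\sum_v\psi(y_v)\ge a_n^2(1-\beta_{G_n})$ exactly at finite $n$. The split into $S_n$ and $L_n$ is needed only for the bound $\beta\le 2a-a^2$, not for the count itself.

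\textbf{What the paper's formulation buys.} Phrasing the estimate via $\sigma_*(\beta)$ in Lemma~\ref{lem:cauchy} isolates a degree-square criterion. That criterion would apply to any sequence with $\sigma(G_n)\le\sigma_*(\beta)+o(1)$. For the lemma at hand, your version is the more economical one.
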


It remains to control the threshold alternative. For this branch, 
Lemma~\ref{prop:threshold-classes} turns the graph into a finite staircase optimization.

\begin{lemma}[threshold lower theorem]\label{prop:threshold-lower}
Let $(G_n)$ be threshold graphs with $2e(G_n)/n^2\to\beta$.  Then
\begin{align*}
  \liminf_{n\to\infty}\frac{N(S_{2,1},G_n)}{n^4}
  \ge
  \min_{z\in J_\beta} C_\beta(z).
\end{align*}
\end{lemma}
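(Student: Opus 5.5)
Passing to a subsequence realizing the $\liminf$, I will work with the limiting staircase description of threshold graphs from Lemma~\ref{prop:threshold-classes}, i.e. the asymptotic formulas \eqref{eq:threshold-beta-asymp} and \eqref{eq:threshold-F-asymp}. The number of twin classes $r$ may grow with $n$, so I would not work with finitely many classes directly. Instead I encode a threshold graph by its nonincreasing normalized degree profile. In the notation of Section~\ref{subsec:prove-lemma3.3}, there is a clique $K$ of size $zn$ and an independent set $T$ whose vertices have nested neighbourhoods in $K$ of sizes $u_in$ with $z\ge u_1\ge\dots\ge u_s\ge0$.

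The exact identity \eqref{eq:threshold-exact-upper} holds for any threshold graph; its proof did not use extremality. Hence the normalized count equals
\[
\psi\text{-type main term } z^3(1-z)+\frac1n\sum_i\Bigl[\psi(u_i)+u_i\,\omega(z+i/n)\Bigr]+O(1/n),
\qquad \omega(x)=x(2-3x),
\]
under the constraint $\beta=z^2+\frac2n\sum_i u_i+o(1)$. Taking a weak limit of the step function $u(\cdot)$ on $[0,1-z]$ reduces the problem to a continuous variational problem: minimize
\[
\Psi(z,u)=z^3(1-z)+\int_0^{1-z}\bigl[\psi(u(t))+u(t)\omega(z+t)\bigr]\,dt
\]
over nonincreasing $u:[0,1-z]\to[0,z]$ with $z^2+2\int u=\beta$. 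Lower semicontinuity is fine, since $\psi$ is continuous and bounded on the relevant range, so $\liminf$ of the graph densities is at least the infimum of this problem.

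The core step is to show that a minimizer has the two-step form $u=z$ on $[0,a]$ and $u=0$ afterwards, possibly combined with the complement-split structure. Matching the construction in the Introduction instead suggests the profile ``$u\equiv h$ on the whole of $[0,1-z]$'': $A$ is the set of clique vertices seen by all of $C$, and $B$ the rest. Indeed, $h_\beta(z)=(\beta-z^2)/(2(1-z))$ is exactly the constant value forced by the edge constraint. So the target is to prove that the minimizer is the constant function $u\equiv h$, with $z$ then free in $J_\beta$. The constraint $h\le z$ is what gives $z\ge1-\sqrt{1-\beta}$, and $h\ge0$ gives $z\le\sqrt\beta$. This yields exactly $C_\beta(z)$.

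The tool is a rearrangement/averaging argument: replace $u$ by its mean $\bar u$ on $[0,1-z]$. Since $\omega(z+t)$ is nonincreasing for $z+t\ge1/3$ while $u$ is nonincreasing, Chebyshev's inequality gives $\int u\,\omega\ge \bar u\int\omega$ in that region. For $\psi$, when $u\ge1/3$ the function is concave, so Jensen goes the wrong way. I expect to need a mixed argument: Jensen's inequality where $\psi$ is convex ($u\le1/3$), and a direct check otherwise, possibly via a two-class Lagrange computation as in the mixed-face analysis of Section~\ref{subsec:prove-lemma3.3}.

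The main obstacle is handling both nonmonotone weights simultaneously. The weight $\omega(z+t)$ increases for $z+t<1/3$, and $\psi$ changes convexity at $1/3$. My first attempt would use the extreme-point structure of Claim~\ref{lem:capped-simplex}, applied to the discretized problem on each region where the objective is concave in $\mathbf u$. On such a region the minimum sits at a vertex $z,\dots,z,h,\dots,h,0,\dots,0$, leaving a finite-parameter (four-variable $F_+$-type) minimization, whose boundary faces reduce to $C_\beta$ or to its endpoints $c(\beta)$ and $cc(\beta)$ via Facts~\ref{fact:qs-endpoint} and~\ref{fact:qc-endpoint}. The remaining interior critical points would then be excluded by a polynomial certificate, as in \eqref{eq:mixed-certificate}.
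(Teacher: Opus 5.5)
\textbf{Verdict: genuine gap.} Your passage to the continuous problem is fine, provided you take pointwise limits of the monotone profiles via Helly's theorem; weak limits would not suffice, since $\psi$ is not convex. The gap is that the core step is missing, and your intermediate target can fail when $z>1/3$. At fixed $z$ the constant profile $u\equiv h_\beta(z)$ need not minimize $\Psi(z,\cdot)$, so no averaging at fixed $z$ can give $\Psi(z,u)\ge C_\beta(z)$.

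Take $\beta=24/25$ and $z=9/10$, so $h_\beta(z)=3/4$ and $C_\beta(z)=2097/80000\approx0.0262$. The admissible profile with $u=z$ on $[0,1/12]$ and $u=0$ on $(1/12,1/10]$ corresponds to a clique on $9n/10$ vertices, $n/12$ vertices complete to it, and the rest isolated. It has the same $z$ and density $z^2+2\int u=24/25$, but $\Psi=5101/240000\approx0.0213<C_\beta(z)$. This is compatible with the lemma only because $M(24/25)\le c(24/25)\approx0.0190$, which is realized at a different clique size, $z=\sqrt\beta$. So for $z>1/3$ a proof must compare with configurations of another clique size, and your proposal has no mechanism for this.

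For $z\le1/3$ your averaging does go through, more easily than you feared. Since $\omega=\psi'$ and $\psi(x)\ge z^2(1-x)$ on $[z,1]$, we get $\int_0^\tau\omega(z+s)\,ds=\psi(z+\tau)-\psi(z)\ge-z^2\tau$ for $\tau\in[0,1-z]$. The layer-cake formula then gives $\int u\,\omega\ge-z^2\int u=\bar u\int_0^{1-z}\omega$ for every nonincreasing $u\ge0$, regardless of where $\omega$ increases. Jensen on $[0,1/3]$ then yields $\Psi(z,u)\ge C_\beta(z)$.

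The whole difficulty is therefore $z>1/3$, and your fallback does not reach it:
\begin{itemize}
\item Claim~\ref{lem:capped-simplex} localizes minimizers only of concave objectives. Here $T_z$ is convex in every coordinate with $u_i<1/3$, so nothing you say excludes minimizers with two intermediate levels $h_1>1/3>h_2$.
\item Even on the single-plateau family, the promised ``polynomial certificate'' would have to carry out exactly the cross-$z$ comparison above, which is the substance of the lemma.
\end{itemize}

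The paper obtains this comparison differently. It proves $F_r\ge M(\beta_r)$ for all finite staircases by a minimal-counterexample argument: it deletes zero gaps, and it excludes interior local-minimum counterexamples with $r\ge4$ by explicit fixed-density endpoint variations (Lemmas~\ref{lem:no-interior-xrless1} and~\ref{lem:no-interior-xreq1}). In Lemma~\ref{lem:three-gaps} it splits on the sign of $T$, comparing either with $C_\beta(x_2)$ or with the two-gap value $F_\beta(x_2)$ at the same clique size. The latter is then bounded below by $\min\{c(\beta),\cc(\beta)\}$ in Lemma~\ref{lem:one-two-gaps}, which is precisely the change of clique size that fixed-$z$ averaging cannot supply.
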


\subsection{Completion of the lower proof}\label{subsec:lower-completion}
In this subsection, we complete the proof 
of Theorem~\ref{thm:lower} assuming Lemmas~\ref{prop:lower-dichotomy}-\ref{prop:threshold-lower}
\begin{proof}[Proof of Theorem \ref{thm:lower}]
Recall that $M(\beta)=\min_{z\in J_\beta}C_\beta(z)$. We first prove the construction side. If $\beta=1$, then $J_1=\{1\}$ and $C_1(1)=0$,  so complete 
graphs give $i(S_{2,1},1)\le M(1)$.

Assume that $0\le\beta<1$. Fix $z\in J_\beta$, set $h=h_\beta(z)$ and take three classes $A,B,C$ with
\begin{align*}
  |A|=hn+o(n),\qquad |B|=(z-h)n+o(n),\qquad |C|=(1-z)n+o(n).
\end{align*}
The class sizes are nonnegative since $z\in J_\beta$ gives $h\ge0$ and $h\le z$. Make $A\cup B$ a clique, make $A$ complete to $C$, make $B$ anti-complete to $C$, and make $C$ independent. The edge density tends to $z^2+2h(1-z)=\beta$.

Vertices of $A$ have degree $n-1$ and contribute 0 to the $S_{2,1}$-count. Vertices of $B$ have degree $zn+o(n)$, and vertices of C have degree $hn+o(n)$. Thus, the number of $S_{2,1}$-density of this construction is 
\begin{align*}
    \frac{1}{n^4}\sum_{v\in B\cup C}d(v)(d(v)-1)(n-1-d(v))=(z-h)z^2(1-z)+(1-z)h^2(1-h)+o(1)\to C_\beta(z).
\end{align*}
Taking $z$ to minimize $C_\beta(z)$ gives $i(S_{2,1},\beta)\le M(\beta)$.

 It remains to prove the reverse inequality. Let $(G_n)$ be any graph sequence with $2e(G_n)/n^2\to\beta$. For each $n$, let $G_n^*$ be a chosen lower-extremal graph with the same number of vertices and edges as $G_n$. Then 
 \begin{align*}
     N(S_{2,1},G_n)\ge N(S_{2,1},G_n^*),\quad \frac{2e(G_n^*)}{n^2}=\frac{2e(G_n)}{n^2}\to\beta.
 \end{align*}
 Thus it is enough to show that
 \begin{align*}
     \liminf_{n\to \infty}\frac{N(S_{2,1},G_n^*)}{n^4}\ge M(\beta).
 \end{align*}

For each $G_n^*$, define
\begin{align*}
    S_n\coloneqq \{v:d(v)\le \delta(G_n^*)+1\},\quad L_n\coloneqq V(G_n^*)\setminus S_n.
\end{align*}  
By Lemma~\ref{prop:lower-dichotomy},  for every $n$ either
\begin{align}\label{Case:L/threshold}
   |L_n|\le\delta(G_n^*)+1
  \qquad\text{or}\qquad
  G_n^*\text{ is threshold}.
\end{align}
Set
\begin{align*}
  m_\beta\coloneqq\min_{z\in J_\beta}C_\beta(z).
\end{align*}
It is enough to prove
\begin{align*}
  \liminf_{n\to\infty}\frac{N(S_{2,1},G_n^*)}{n^4}\ge m_\beta,
\end{align*}
since $N(S_{2,1},G_n)\ge N(S_{2,1},G_n^*)$.  Suppose, to the contrary, that this liminf is less than $m_\beta$.  Then there exist $m_\beta'<m_\beta$ and a subsequence $(G_{n_k}^*)_{k=1}^\infty$ such that
\begin{align*}
  \frac{N(S_{2,1},G_{n_k}^*)}{n_k^4}\to m_\beta'<m_\beta.
\end{align*}
Along this subsequence, we still have $\beta_{G_{n_k}^*}\to\beta$, and  \eqref{Case:L/threshold} remains valid for $G_{n_k}$.  If $|L_{n_k}|\le\delta(G_{n_k}^*)+1$, then by Lemma \ref{lem:cauchy-branch}, together with the continuity of $\gamma\mapsto\min_{z\in J_\gamma}C_\gamma(z)$, we have 
\begin{align*}
  \lim_{k\to\infty}\frac{N(S_{2,1},G_{n_{k}}^*)}{n_{k}^4}\ge m_\beta,
\end{align*}
a contradiction. Otherwise,   Lemma \ref{prop:threshold-lower} gives 
\begin{align*}
    \lim_{k\to\infty}\frac{N(S_{2,1},G_{n_{k}}^*)}{n_{k}^4}\ge m_\beta,
\end{align*}
again a contradiction.  It completes the proof of  Theorem \ref{thm:lower}.
\end{proof}

\subsection{Proof of Lemma~\ref{prop:lower-dichotomy}}\label{subsec:prove-lemma4.1}

In this subsection, we prove Lemma~\ref{prop:lower-dichotomy}. We first establish the lower transfer rules 
and then show that a chosen lower-extremal graph is threshold unless its 
high-degree set is small.

\begin{lemma}[lower transfer rules]\label{lem:lower-transfer}
Let $G$ be a chosen lower-extremal graph.  Then:
\begin{enumerate}[label=\textup{(\alph*)}]
\item If $d(u)+d(v)<2n/3$, then $|d(u)-d(v)|\le1$.
\item If $d(u)\le d(v)$ and $d(u)+d(v)\ge2n/3$, then
$
  N(u)\setminus\{v\}\subseteq N(v)\setminus\{u\}.
$
\end{enumerate}
\end{lemma}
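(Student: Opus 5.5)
The plan mirrors Lemma~\ref{lem:upper-transfer}. We use the edge-move of Lemma~\ref{lem:transfer} and the two-level choice of a chosen lower-extremal graph: $N(S_{2,1},G)$ is minimal, and among minimizers $\normsq{G}$ is maximal. Any move $G'=G-uw+vw$ therefore must satisfy $N(S_{2,1},G')\ge N(S_{2,1},G)$. If equality holds, it must in addition satisfy $\normsq{G'}\le\normsq{G}$. In each part we assume the conclusion fails, build such a move, and read off the signs of the two factors in Lemma~\ref{lem:transfer}(i).

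For (a), assume without loss of generality $d(u)\le d(v)$ and suppose $d(v)\ge d(u)+2$. Since $d(v)>d(u)$, the set $N(v)\setminus(N(u)\cup\{u\})$ is nonempty. Indeed, $|N(v)\setminus\{u\}|\ge d(v)-1>d(u)\ge|N(u)\setminus\{v\}|$, so some $w\neq u,v$ lies in $N(v)\setminus N(u)$. Move the edge $vw$ to $uw$. Apply Lemma~\ref{lem:transfer}(i) with the roles of $u$ and $v$ exchanged, so that $v$ loses the edge. The change in $N(S_{2,1})$ is
\begin{align*}
(d(v)-d(u)-1)(3d(v)+3d(u)-2n).
\end{align*}
The first factor is at least $1$. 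The second factor is strictly negative because $d(u)+d(v)<2n/3$. So the count strictly decreases, contradicting minimality; no tie-breaking is needed here.

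For (b), let $d(u)\le d(v)$ with $d(u)+d(v)\ge 2n/3$, and suppose some $w\in N(u)\setminus(N(v)\cup\{v\})$ exists. Then $w\ne u,v$, $uw\in E(G)$ and $vw\notin E(G)$. Set $G'=G-uw+vw$. By Lemma~\ref{lem:transfer}(i) the change is $(d(u)-d(v)-1)(3d(u)+3d(v)-2n)$. The first factor is at most $-1$ and the second is nonnegative, so the change is $\le 0$. If it is strictly negative, minimality is contradicted. If it is zero, which happens exactly when $d(u)+d(v)=2n/3$, then $G'$ is also a minimizer. In that case Lemma~\ref{lem:transfer}(ii) gives $\normsq{G'}-\normsq{G}=2(d(v)-d(u)+1)>0$, contradicting the maximality of $\normsq{G}$.

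There is little real obstacle. The only delicate points are the boundary case $d(u)+d(v)=2n/3$ in (b), handled by the degree-square tie-break, and checking in (a) that the vertex $w$ exists and is distinct from $u$ and $v$. Note also that (a) needs no hypothesis ordering $d(u),d(v)$; after relabelling, the strict inequality alone forces a strict decrease.
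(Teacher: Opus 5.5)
Your proposal is correct and matches the paper's proof: in (a) you apply the same strictly $S_{2,1}$-decreasing edge move, from the higher-degree vertex to the lower-degree one, up to relabelling, and you also justify why the vertex $w$ exists, which the paper only asserts. In (b) you use the identical move together with the degree-square tie-break in the boundary case $d(u)+d(v)=2n/3$.
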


\begin{proof}
For (a), assume $d(u)\ge d(v)+2$.  Then there is $w\in N(u)\setminus (N(v)\cup\{v\})$.  Move $uw$ to $vw$, and call the resulting graph $G'$.  Lemma \ref{lem:transfer} gives
\begin{align*}
  N(S_{2,1},G')-N(S_{2,1},G)=(d(u)-d(v)-1)(3d(u)+3d(v)-2n)<0.
\end{align*}
Thus $N(S_{2,1},G')<N(S_{2,1},G)$, contradicting lower extremality.

For (b), if the containment fails, choose $w\in N(u)\setminus (N(v)\cup\{v\})$ and move $uw$ to $vw$, producing $G'$.  Again Lemma \ref{lem:transfer} gives
\begin{align*}
  N(S_{2,1},G')-N(S_{2,1},G)=(d(u)-d(v)-1)(3d(u)+3d(v)-2n)\le0.
\end{align*}
Note that $d(u)-d(v)-1\le -1<0$.  If $d(u)+d(v)>2n/3$, then $N(S_{2,1},G')<N(S_{2,1},G)$, a contradiction.  If $d(u)+d(v)=2n/3$, then $N(S_{2,1},G')=N(S_{2,1},G)$, while
\begin{align*}
  \normsq{G'}-\normsq{G}=2(d(v)-d(u)+1)\ge2>0,
\end{align*}
a contradiction to the definition of a chosen lower-extremal graph.
\end{proof}

\begin{proof}[Proof of Lemma~\ref{prop:lower-dichotomy}]
Choose a vertex $x_0$ of degree $\delta(G)$.  If $u\in L$, then $d(u)\ge\delta(G)+2=d(x_0)+2$.  By Lemma \ref{lem:lower-transfer}(a), we have $d(u)+d(x_0)\ge2n/3$; otherwise it would force $|d(u)-d(x_0)|\le1$ and thus $u\in S$.  Hence every $u\in L$ satisfies
\begin{align}\label{eq:L-low}
  d(u)+\delta(G)\ge \frac{2n}{3}.
\end{align}

Assume $|L|\ge\delta(G)+2$.  

First, we claim that $S$ is independent.  Suppose instead that $xy\in E(G)$ with $x,y\in S$.  For any $u\in L$, \eqref{eq:L-low} gives
\begin{align*}
  d(x)+d(u)\ge\delta(G)+d(u)\ge\frac{2n}{3},
\end{align*}
Lemma \ref{lem:lower-transfer}(b) gives
$
  N(x)\setminus\{u\}\subseteq N(u)\setminus\{x\}.
$ 
Since $y\in N(x)\setminus\{u\}$, we get $y\in N(u)$.  Thus, 
\begin{align*}
  d(y)\ge |L|\ge\delta(G)+2,
\end{align*}
contradicting $y\in S$.

Second, order $L$ as $u_1,\ldots,u_k$ with nonincreasing degree.  We show that
$u_i$ dominates $u_j$ if $i<j$.  For $i<j$, \eqref{eq:L-low} for $u_j$ and
$d(u_i)\ge\delta(G)$ give
\begin{align*}
  d(u_i)+d(u_j)\ge\delta(G)+d(u_j)\ge\frac{2n}{3}.
\end{align*}
Thus Lemma \ref{lem:lower-transfer}(b), applied to $(u_j,u_i)$, gives
$
  N(u_j)\setminus\{u_i\}\subseteq N(u_i)\setminus\{u_j\}.
$

Third, we claim that the vertices in $S$ can be ordered as
$v_1,\ldots,v_s$ with nonincreasing degree so that $v_i$ dominates $v_j$
whenever $i<j$.  Since $S$ is independent, every neighbor of a vertex of $S$
lies in $L$.  If $v\in S$, $u_{i_0}\in N(v)$, and $j\le i_0$, then $u_{j}$ dominates
$u_{i_0}$, so $v\in N(u_j)$.  Since $d(v)\in\{\delta(G),\delta(G)+1\}$ for $v\in S$, we have $N(v)=\{u_1,\dots,u_{d(v)}\}$.  Thus
$N(v_j)\subseteq N(v_i)$ whenever $i<j$.

Finally, for every $x\in S$ and $u\in L$, we claim that $u$ dominates $x$. Note that \eqref{eq:L-low} gives
\begin{align*}
  d(x)+d(u)\ge\delta(G)+d(u)\ge\frac{2n}{3}.
\end{align*}
Lemma \ref{lem:lower-transfer}(b) yields that $u$ dominates $x$.  Hence $G$ is threshold.
\end{proof}

\subsection{Proof of Lemma~\ref{lem:cauchy-branch}}\label{subsec:prove-lemma4.2}
In this subsection, we prove Lemma~\ref{lem:cauchy-branch}. The argument bounds the normalized degree-square sum in the non-threshold 
branch and then applies a Cauchy estimate to obtain the lower profile 
$M(\beta)$.

Let
\begin{align*}
  y_v\coloneqq\frac{d(v)}{n},
  \qquad
  \sigma(G)\coloneqq\frac1n\sum_v y_v^2.
\end{align*}
For $0\le\beta\le1$, define
\begin{align}\label{eq:sigma-star}
  \sigma_*(\beta)\coloneqq 2\beta-1+(1-\beta)^{3/2}.
\end{align}

\begin{lemma}\label{lem:cauchy}
If $G_n$ is a graph sequence with $2e(G_n)/n^2\to\beta$ and
$\sigma(G_n)\le\sigma_*(\beta)+o(1)$, then
\begin{align*}
  \frac{N(S_{2,1},G_n)}{n^4}\ge \cc(\beta)-o(1).
\end{align*}
\end{lemma}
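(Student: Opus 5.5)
The plan is to linearize $\psi(t)=t^2(1-t)$ against the two available moment constraints. These are the first moment $\frac1n\sum_v y_v=\beta_{G_n}+o(1)$ and the upper bound on the second moment $\sigma(G_n)\le\sigma_*(\beta)+o(1)$. The extremal picture is the quasi-star. There, a $z$-fraction of the vertices has normalized degree $1$ and the rest has normalized degree $z$, where $z=z_\beta=1-\sqrt{1-\beta}$. Write $s=\sqrt{1-\beta}$, so $z=1-s$. Then this configuration has first moment $2z-z^2=\beta$. Its second moment is $z+(1-z)z^2=1-2s^2+s^3=\sigma_*(\beta)$. So $\sigma_*(\beta)$ is exactly the degree-square value of the quasi-star, and the statement says that the quasi-star is optimal among degree sequences whose second moment is no larger.

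First I handle the normalization. Since $\Phi_n(d)=d(d-1)(n-1-d)=n^3\psi(d/n)+O(n^2)$, identity \eqref{EQ:number-S2,1-Phi-degree} gives
\begin{align*}
  \frac{N(S_{2,1},G_n)}{n^4}=\frac1n\sum_{v}\psi(y_v)+O(1/n).
\end{align*}
Also $\frac1n\sum_v y_v=2e(G_n)/n^2\to\beta$.

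The key step is a pointwise polynomial minorant of $\psi$ on $[0,1]$ that has a nonpositive $t^2$-coefficient. That sign is what lets the hypothesis $\sigma\le\sigma_*$ push the bound in the right direction. The minorant should touch $\psi$ at the two degree values of the quasi-star: tangentially at $t=z$ and at the endpoint $t=1$. Concretely, for $t\in[0,1]$ we have
\begin{align*}
  \psi(t)-\bigl(-z^2+(2z+z^2)t-2zt^2\bigr)=(1-t)(t-z)^2\ge0 .
\end{align*}
This is checked by expanding $(1-t)(t^2-2zt+z^2)=-t^3+(1+2z)t^2-(2z+z^2)t+z^2$. Averaging over $v$ gives
\begin{align*}
  \frac1n\sum_v\psi(y_v)\ge -z^2+(2z+z^2)\Bigl(\beta+o(1)\Bigr)-2z\,\sigma(G_n)\ge -z^2+(2z+z^2)\beta-2z\,\sigma_*(\beta)-o(1),
\end{align*}
where the second inequality uses $2z\ge0$ together with the hypothesis on $\sigma(G_n)$.

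Finally I identify the constant. The quasi-star degree distribution is supported on $\{z,1\}$, where the slack $(1-t)(t-z)^2$ vanishes. It also has first moment $\beta$ and second moment $\sigma_*(\beta)$. Hence $-z^2+(2z+z^2)\beta-2z\sigma_*(\beta)$ equals the $\psi$-average of that distribution. This average is $z\psi(1)+(1-z)\psi(z)=(1-z)^2z^2=\cc(\beta)$, as in Fact~\ref{fact:qs-endpoint}. A direct substitution in terms of $s$ confirms the identity if desired. The case $\beta=1$ ($z=1$) is covered by the same formula, with both sides tending to $0$. I expect no real obstacle. The only insight needed is choosing the minorant $(1-t)(t-z)^2$, that is, recognizing $\sigma_*$ as the quasi-star second moment so that the tangency points are forced.
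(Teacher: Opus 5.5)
Your argument is correct. It differs from the paper's in packaging rather than in substance. The paper applies Cauchy--Schwarz to the vectors $d(v)\sqrt{n-d(v)}$ and $\sqrt{n-d(v)}$ to get $N(S_{2,1},G_n)/n^4\ge(\beta_G-\sigma(G))^2/(1-\beta_G)-o(1)$. Only then does it insert $\sigma\le\sigma_*(\beta)+o(1)$. This requires noting $\beta_G-\sigma(G)\ge0$ before squaring and dividing by $1-\beta$, which is why $\beta=1$ is split off.

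Your minorant is the same inequality with a frozen multiplier. Write $\bar y=\frac1n\sum_v y_v$. Averaging $(1-y_v)(y_v-\lambda)^2\ge0$ gives $\frac1n\sum_v\psi(y_v)\ge2\lambda(\bar y-\sigma)-\lambda^2(1-\bar y)$. The Cauchy--Schwarz bound is this inequality optimized over $\lambda$, with optimum $\lambda=(\bar y-\sigma)/(1-\bar y)$. You fix $\lambda=z_\beta$, which is exactly that optimizer when $\bar y=\beta$ and $\sigma=\sigma_*(\beta)$, so nothing is lost.

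What your version buys is linearity in $(\bar y,\sigma)$. The monotonicity in $\sigma$ is read off from the sign of $-2z_\beta$, and no sign check, squaring, or division is needed. The case $\beta=1$ is handled uniformly, and the tangency at $\{z_\beta,1\}$ makes the quasi-star equality case transparent.

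The paper's version gives a sharper intermediate bound when $\sigma<\sigma_*(\beta)$ and does not require knowing $z_\beta$ in advance. For the lemma as stated, the two arguments are equivalent.
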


\begin{proof}
If $\beta=1$, then $\cc(1)=0$ and the claim follows from nonnegativity.  Hence
assume $\beta<1$.  For each $n$, set $G=G_n$ and write
$\beta_G=2e(G)/n^2$.  All asymptotic notation below is taken as $n\to\infty$
along this sequence; in particular, the density assumption gives
$\beta_G=\beta+o(1)$.  The equations~\eqref{eq:Mndef} and~\eqref{EQ:number-S2,1-Phi-degree} give
\begin{align*}
N(S_{2,1},G)
=\sum_v d(v)^2(n-d(v))+O(n^3).
\end{align*}
Cauchy's inequality, applied to the vectors $d(v)\sqrt{n-d(v)}$ and $\sqrt{n-d(v)}$, gives
\begin{align*}
  \left(\sum_v d(v)(n-d(v))\right)^2
  \le
  \left(\sum_v d(v)^2(n-d(v))\right)
  \left(\sum_v (n-d(v))\right).
\end{align*}
Therefore
\begin{align}\label{eq:cauchy-lower-l45}
  \frac{N(S_{2,1},G)}{n^4}=\frac{1}{n^4}\sum_v d(v)^2(n-d(v))+O(1/n)
  \ge
  \frac{\left(\sum_v d(v)(n-d(v))\right)^2}{n^4\sum_v(n-d(v))}-o(1).
\end{align}
Now
\begin{align}\label{eq:denom-l45}
  \sum_v(n-d(v))=n^2-2e(G)=(1-\beta_G+o(1))n^2=(1-\beta)n^2+o(n^2),
\end{align}
and
\begin{align*}
  \sum_v d(v)(n-d(v))
  =n\sum_v d(v)-\sum_v d(v)^2
  =2ne(G)-n^3\sigma(G)
  =(\beta_G-\sigma(G))n^3.
\end{align*}
Since, by the hypotheses, $\beta_G=\beta+o(1)$ and
$\sigma(G)=\sigma(G_n)\le\sigma_*(\beta)+o(1)$,
\begin{align}\label{eq:num-l45}
  \sum_v d(v)(n-d(v))\ge(\beta-\sigma_*(\beta))n^3-o(n^3).
\end{align}
By the definition of $\sigma_*$,
\begin{align}\label{eq:sigma-star-gap}
  \beta-\sigma_*(\beta)=1-\beta-(1-\beta)^{3/2}=(1-\beta)(1-\sqrt{1-\beta}).
\end{align}
Combining \eqref{eq:cauchy-lower-l45}, \eqref{eq:denom-l45}, \eqref{eq:num-l45}, and \eqref{eq:sigma-star-gap} gives
\begin{align*}
  \frac{N(S_{2,1},G)}{n^4}
  \ge \frac{(\beta-\sigma_*(\beta))^2}{1-\beta}-o(1)
  =(1-\beta)(1-\sqrt{1-\beta})^2-o(1)
  =\cc(\beta)-o(1).
\end{align*}
\end{proof}

\begin{proof}[Proof of Lemma~\ref{lem:cauchy-branch}]
Let $\alpha=\delta(G_n)/n$ and $y_v\coloneqq d(v)/n$.  Then $y_v\in[\alpha,1]$.  The chord inequality
\begin{align*}
  y^2\le \alpha^2+(1+\alpha)(y-\alpha)
\end{align*}
holds on this interval, since the difference between the right side and the left side is $(y-\alpha)(1-y)$.  Averaging over $y_v=d(v)/n$ gives
\begin{align}\label{eq:sigma-bound-alpha}
  \sigma(G_n)&=\frac1n\sum_v y_v^2
  \le \alpha^2+(1+\alpha)\left(\frac1n\sum_v y_v-\alpha\right)\notag\\
  &=\alpha^2+(1+\alpha)(\beta_G-\alpha)
  =\beta_G-\alpha(1-\beta_G).
\end{align}
If $|L_n|\le\delta(G_n)+1$, then vertices in $S_n$ have degree at most $\delta(G_n)+1$ and vertices in $L_n$ have degree at most $n$.  Hence
\begin{align*}
2e(G_n)&=\sum_{v\in S_n}d(v)+\sum_{v\in L_n}d(v)\\
&\le(n-|L_n|)(\delta(G_n)+1)+|L_n|n\\
&=n(\delta(G_n)+1)+|L_n|(n-\delta(G_n)-1)\\
&\le n(\delta(G_n)+1)+(\delta(G_n)+1)(n-\delta(G_n)-1).
\end{align*}
Dividing by $n^2$ gives
\begin{align}\label{eq:alpha-beta-bound}
  \beta_{G_n}\le2\alpha-\alpha^2+O(1/n).
\end{align}
Equivalently, $(1-\alpha)^2\le1-\beta_{G_n}+O(1/n)$.  Since $1-\alpha\ge0$,
\begin{align*}
  \alpha\ge1-\sqrt{1-\beta_{G_n}}-o(1).
\end{align*}
Applying this to \eqref{eq:sigma-bound-alpha} gives
\begin{align}\label{eq:sigma-upper-Gn}
  \sigma({G_n})\le \beta_{G_n}-(1-\sqrt{1-\beta_{G_n}})(1-\beta_{G_n})+o(1)
  =2\beta_{G_n}-1+(1-\beta_{G_n})^{3/2}+o(1).
\end{align}
The same Cauchy estimate used in Lemma \ref{lem:cauchy} gives
\begin{align}\label{eq:cauchy-branch-estimate}
  \frac{N(S_{2,1},G_n)}{n^4}
  \ge
  \frac{\left(\sum_v d(v)(n-d(v))\right)^2}
       {n^4\sum_v(n-d(v))}-o(1).
\end{align}
Here
\begin{align}\label{eq:Gn-denom-num}
  \sum_v(n-d(v))=(1-\beta_{G_n})n^2\quad\text{ and }\quad \sum_v d(v)(n-d(v))
  =(\beta_{G_n}-\sigma(G_n))n^3.
\end{align}
By \eqref{eq:sigma-upper-Gn}, we have
\begin{align}\label{eq:betaGn-minus-sigma-lower}
  \beta_{G_n}-\sigma(G_n)
  \ge \beta_{G_n}-\sigma_*(\beta_{G_n})-o(1)
  =(1-\beta_{G_n})(1-\sqrt{1-\beta_{G_n}})-o(1).
\end{align}
Fix $\varepsilon>0$.  If $\beta_G\le1-\varepsilon$, then substituting \eqref{eq:Gn-denom-num} and \eqref{eq:betaGn-minus-sigma-lower} into \eqref{eq:cauchy-branch-estimate} gives
\begin{align*}
  \frac{N(S_{2,1},G_n)}{n^4}\ge\frac{(\beta_{G_n}-\sigma(G_n))^2}{1-\beta_{G_n}}-o(1).
\end{align*}
Since $\sum_vd(v)(n-d(v))\ge0$, squaring  \eqref{eq:betaGn-minus-sigma-lower} loses only an $o(1)$ term.  Thus, uniformly on
$\beta_{G_n}\le1-\varepsilon$,
\begin{align*}
  \frac{N(S_{2,1},G_n)}{n^4}
  \ge (1-\beta_{G_n})(1-\sqrt{1-\beta_{G_n}})^2-o(1)
  =\cc(\beta_{G_n})-o(1).
\end{align*}
If $\beta_{G_n}>1-\varepsilon$, then nonnegativity gives
\begin{align*}
  \frac{N(S_{2,1},G_n)}{n^4}
  \ge0\ge \cc(\beta_{G_n})-\sup_{\gamma\in[1-\varepsilon,1]}\cc(\gamma),
\end{align*}
since $\beta_{G_n}\in[1-\varepsilon,1]$ implies
$
  \cc(\beta_{G_n})\le\sup_{\gamma\in[1-\varepsilon,1]}\cc(\gamma).
$
Moreover, since
\begin{align*}
  \cc(\gamma)=(1-\sqrt{1-\gamma})^2(1-\gamma),
\end{align*}
we have
\begin{align*}
  \sup_{\gamma\in[1-\varepsilon,1]}\cc(\gamma)\to0
  \qquad\text{as }\varepsilon\to0.
\end{align*}
Thus the near-complete range also gives
\begin{align*}
  \frac{N(S_{2,1},G_n)}{n^4}\ge \cc(\beta_{G_n})-o(1),
\end{align*}
after first choosing $\varepsilon$ small and then taking $n$ large.  Combining
the two ranges gives a uniform $o(1)$ error on $[0,1]$.

By Fact \ref{fact:qs-endpoint}, applied with $\beta=\beta_{G_n}$,
\begin{align*}
  \min_{z\in J_{\beta_{G_n}}} C_{\beta_{G_n}}(z)\le \cc(\beta_{G_n}).
\end{align*}
Therefore
\begin{align*}
  \frac{N(S_{2,1},G_n)}{n^4}\ge \min_{z\in J_{\beta_G}} C_{\beta_{G_n}}(z)-o(1).
\end{align*}
 Since $2e(G_n)/n^2 \to\beta $ and $M$ is continuous on $ [0,1]$,
 \begin{align*}
     M\left(\frac{2e(G_n)}{n^2}\right)\to M(\beta). 
 \end{align*}
 Therefore,
 \begin{align*}
     \liminf_{n\to\infty}\frac{N(S_{2,1},G_n)}{n^4}\ge M(\beta).
 \end{align*}
\end{proof}

\subsection{Proof of Lemma~\ref{prop:threshold-lower}}\label{subsec:prove-lemma4.3}
In this subsection, we prove Lemma~\ref{prop:threshold-lower}. 
We translate threshold graphs into finite 
staircase patterns, prove the finite dimensional inequality $F_r(x)\ge 
M(\beta_r(x))$, and then pass to graph 
sequences.

Recall that
\begin{align*}
  \psi(t)\coloneqq t^2(1-t),
  \qquad
  M(\beta)\coloneqq\min_{z\in J_\beta}C_\beta(z).
\end{align*}

The function $M$ is continuous on $[0,1]$.  For $0\le\beta<1$, this follows from continuity of $C_\beta(z)$ on the compact interval $J_\beta$, whose endpoints vary continuously with $\beta$.  At $\beta=1$, Fact \ref{fact:qs-endpoint} gives $0\le M(\beta)\le\cc(\beta)\to0=M(1)$ as $\beta\to1$.

\medskip
\noindent\textbf{Finite normalization.}
Let $G$ be a finite threshold graph on $n$ vertices.  Let $V_1,\ldots,V_r$ be its positive twin classes, ordered as in Lemma \ref{prop:threshold-classes}, and write
\begin{align*}
  s_i\coloneqq|V_i|,
  \qquad
  S_i\coloneqq s_1+\cdots+s_i,
  \qquad
  x_i^{(n)}\coloneqq\frac{S_i}{n},
  \qquad
  x_0^{(n)}\coloneqq 0.
\end{align*}
Here the $x_i^{(n)}$ are cumulative endpoints, not normalized class sizes; the normalized size of $V_i$ is the gap $x_i^{(n)}-x_{i-1}^{(n)}$.  For every finite $n$,
\begin{align*}
  0=x_0^{(n)}<x_1^{(n)}<\cdots<x_r^{(n)}\le1,
\end{align*}
where the final inequality reflects the possible presence of isolated vertices:
$x_r^{(n)}=|W|/n$, with equality exactly when $G$ has no isolated vertices.
The strict inequalities between consecutive endpoints hold for the finite
positive classes, but no such strict inequality has to survive after passing to
a closed limiting pattern.  By Lemma \ref{prop:threshold-classes}, a vertex of $V_i$ has degree
\begin{align*}
  d_i=S_{r+1-i}-\ind_{\{2i\le r+1\}}.
\end{align*}
Consequently
\begin{align}\label{eq:finite-beta}
  \frac{2e(G)}{n^2}=\frac{1}{n^2}\sum_{i=1}^{r}s_id_i=\sum_{i=1}^r\frac{s_i}{n}\frac{S_{r+1-i}}{n}+O(1/n)=\sum_{i=1}^r (x_i^{(n)}-x_{i-1}^{(n)})x_{r+1-i}^{(n)}+O(1/n),
\end{align}
and, by the identity \eqref{EQ:number-S2,1-Phi-degree},
\begin{align}\label{eq:finite-F}
  \frac{N(S_{2,1},G)}{n^4}=\sum_{i=1}^r (x_i^{(n)}-x_{i-1}^{(n)})\psi(x_{r+1-i}^{(n)})+O(1/n).
\end{align}
For the $S_{2,1}$-count, 
\begin{align*}
  \frac{d_i(d_i-1)(n-1-d_i)}{n^3}=\psi(x_{r+1-i}^{(n)})+O(1/n).
\end{align*}
Multiplying by $s_i/n$ and summing over $i$ again gives a total error $O(1/n)$, independently of the number of point classes.

For a weak closed staircase pattern $\mathbf{x}=(x_0,\ldots,x_r)$ satisfying
\begin{align*}
  0=x_0\le x_1\le\cdots\le x_r\le1
\end{align*}
put
\begin{align*}
  \Delta_i\coloneqq x_i-x_{i-1},\quad
  \beta_r(\mathbf{x})\coloneqq\sum_{i=1}^r \Delta_i x_{r+1-i},
  \quad
  F_r(\mathbf{x})\coloneqq\sum_{i=1}^r \Delta_i\psi(x_{r+1-i}).
\end{align*}
For $r=0$ we use the empty-list convention $\beta_0=F_0=0$.  The list
\begin{align*}
  (\Delta_1,x_r),(\Delta_2,x_{r-1}),\ldots,(\Delta_r,x_1)
\end{align*}
records the limiting weight of each point class together with its normalized degree level.  The quantity $\beta_r$ is the total edge-density contribution of this list, and $F_r$ is the corresponding total $S_{2,1}$-density contribution.

\begin{lemma}[zero-gap endpoint deletion]\label{lem:zero-gap}
Let $\mathbf{x}=(x_0,\ldots,x_r)$ be a weak closed staircase pattern satisfying
$
  0=x_0\le x_1\le\cdots\le x_r\le1
$
and assume that $\Delta_j=x_j-x_{j-1}=0$.  Put $q\coloneqq r+1-j$.  Remove endpoints from this finite endpoint list, not vertices from a graph, according to
\begin{align}\label{eq:delete-rule}
  D_j=
  \begin{cases}
    \{j\},&q=j\text{ or }q=j-1,\\
    \{j,q\},&\text{otherwise}.
  \end{cases}
\end{align}
Let $\widetilde{\mathbf{x}}$ be the remaining endpoint list.  Then $\widetilde{\mathbf{x}}$ is a weak closed staircase pattern of smaller length and
\begin{align*}
  \widetilde \beta(\widetilde{\mathbf{x}})=\beta_r(\mathbf{x}),
  \qquad
  \widetilde F(\widetilde{\mathbf{x}})=F_r(\mathbf{x}).
\end{align*}
\end{lemma}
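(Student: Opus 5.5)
The plan is a direct bookkeeping verification. The only structural facts needed are that the pairing $i\leftrightarrow r+1-i$ of weights $\Delta_i$ with levels $x_{r+1-i}$ is an involution, and that $\Delta_j=0$ means $x_j=x_{j-1}$. Throughout, write the summands of $\beta_r$ and $F_r$ as $\Delta_i\,g(x_{r+1-i})$ with $g(t)=t$ or $g=\psi$, so both identities can be handled at once.

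First I check that $\widetilde{\mathbf x}$ is a weak closed staircase pattern. Deleting entries of a nondecreasing list in $[0,1]$ keeps it nondecreasing in $[0,1]$. The one point to verify is that $x_0=0$ is never deleted. We have $j\ge1$, and in the second case of \eqref{eq:delete-rule} we have $q=r+1-j\ge1$. So $0\notin D_j$, and the new list still starts at $0$. Let $r'=r-|D_j|$ and let $\iota$ be the increasing relabelling of the surviving indices onto $\{0,\dots,r'\}$.

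The second step is to show that the pairing survives relabelling. In the generic case $D_j=\{j,q\}$ with $j+q=r+1$, the removed set is itself closed under $i\mapsto r+1-i$. For a surviving $i$ with partner $p=r+1-i$, I claim that
\[
\#\{k\in D_j:k\le i\}+\#\{k\in D_j:k\le p\}=2,
\]
so that $\iota(i)+\iota(p)=r'+1$. To prove this, note that $k\le i$ if and only if $r+1-k\ge p$. Hence each removed $k$ is counted exactly once, either via $k\le i$ or via $r+1-k\le p$, and equality cases are excluded because $i\notin D_j$. The same computation with $D_j=\{j\}$ handles $q=j$, where $r$ is odd and $j$ is the fixed point. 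In the case $q=j-1$, the pairing shifts by one around the deleted index; there I check directly that the surviving term $j-1$, whose level is $x_{j+1}$, is still paired correctly after relabelling. This small case check is where I expect the main (mild) obstacle: getting the index shifts exactly right in the degenerate cases $q\in\{j,j-1\}$ and at the boundary $j=1$, $q=r$.

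Third, I compare the summands. Only the gaps immediately after a deleted endpoint change. Deleting $j$ turns the gap at $j+1$ into $x_{j+1}-x_{j-1}=\Delta_{j+1}$, since $x_j=x_{j-1}$. Term $j$ itself had weight $\Delta_j=0$ and therefore contributed nothing. Deleting $q$ merges terms $q$ and $q+1$ into one term of weight $\Delta_q+\Delta_{q+1}$. Their levels are $x_{r+1-q}=x_j$ and $x_{r-q}=x_{j-1}$, and these are equal, so
\[
\Delta_q g(x_j)+\Delta_{q+1}g(x_{j-1})=(\Delta_q+\Delta_{q+1})g(x_{j-1}).
\]
The boundary case $q=r$ (that is, $j=1$) needs separate mention, since there is no term $q+1$. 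In that case term $r$ has level $x_1=x_0=0$, and $g(0)=0$ for both choices of $g$, so dropping it costs nothing. Combining these observations with the preserved pairing from the second step shows that the two sums agree term by term. This gives $\widetilde\beta(\widetilde{\mathbf x})=\beta_r(\mathbf x)$ and $\widetilde F(\widetilde{\mathbf x})=F_r(\mathbf x)$.
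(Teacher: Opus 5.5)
Your approach is the paper's: a direct check on the size--degree list $(\Delta_i,x_{r+1-i})$, split into the cases $q\notin\{j,j-1\}$ (with $j=1$ as a boundary subcase), $q=j$, and $q=j-1$. The only substantive change is the merge $(\Delta_q,x_j),(\Delta_{q+1},x_{j-1})\mapsto(\Delta_q+\Delta_{q+1},x_{j-1})$. Your involution-closure count is a cleaner justification of the relabelling than the paper's ``up to relabeling'', and it correctly handles the generic case and $q=j$.

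The case $q=j-1$, however, is misdescribed, and as written the argument fails there. With $r=2j-2$, the term $j-1$ has level $x_{r+2-j}=x_j$, not $x_{j+1}$. So its anti-diagonal partner is exactly the deleted index $j$, and it is \emph{not} still paired correctly. After deletion $r'=2j-3$, and the relabelled index $j-1$ satisfies $(j-1)+(j-1)=r'+1$. This term therefore becomes the self-paired middle term of the shorter list, with level $x_{j-1}$. Its contributions change from $\Delta_{j-1}x_j$ and $\Delta_{j-1}\psi(x_j)$ to $\Delta_{j-1}x_{j-1}$ and $\Delta_{j-1}\psi(x_{j-1})$. These agree only because $x_j=x_{j-1}$, and this is where the zero-gap hypothesis enters on the label side in this case (the paper's sentence ``changes from $\Delta_qx_j$ to $\Delta_qx_{j-1}$''). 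Your third step tracks only changed weights and defers labels to the pairing of your second step, so this changed label is never accounted for. The repair is short. For $i\le j-2$ one has $\iota(i)+\iota(r+1-i)=i+(r-i)=r'+1$, so $\{j-1,j\}$ is the only broken pair. This also covers the term $j+1$, whose new weight is $\Delta_{j+1}$ and whose partner $j-2$ survives. The observation above then disposes of the broken pair.
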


For orientation, the deletion rule has the following local effect on the size-degree list.  This table is only a guide to the case check in the proof.
\begin{align*}
\begin{array}{c|c|c}
\text{case} & \text{endpoint(s) removed} & \text{only possible local change}\\ \hline
q\notin\{j,j-1\},\ j>1 & j,q &
(\Delta_q,x_j),(\Delta_{q+1},x_{j-1})\mapsto(\Delta_q+\Delta_{q+1},x_j)\\
j=1 & j,q=r & \text{last pair has degree label }x_1=0\\
q=j & j & \text{removed pair has zero weight}\\
q=j-1 & j & (\Delta_q,x_j)\mapsto(\Delta_q,x_{j-1})
\end{array}
\end{align*}
\begin{proof}
For the pattern $\mathbf{x}$, write its size-degree list as
\begin{align*}
  L(\mathbf{x})\coloneqq((\Delta_1,x_r),(\Delta_2,x_{r-1}),\ldots,(\Delta_r,x_1)).
\end{align*}
The pair $(\Delta_i,x_{r+1-i})$ contributes $\Delta_i x_{r+1-i}$ to $\beta_r(\mathbf{x})$ and $\Delta_i\psi(x_{r+1-i})$ to $F_r(\mathbf{x})$.  Since $\Delta_j=0$, we have $x_j=x_{j-1}$, and the $j$-th pair in $L(\mathbf{x})$ has zero weight.  Thus it contributes neither to $\beta_r(\mathbf{x})$ nor to $F_r(\mathbf{x})$.  The endpoint $x_j$ also appears on the anti-diagonal side of the list: since $q=r+1-j$, the $q$-th pair has degree label $x_{r+1-q}=x_j$.  Thus one must also account for the degree-label side when $x_j$ is removed.

If $r=1$, then the zero-gap assumption forces
$\Delta_1=x_1-x_0=x_1=0$, so $\mathbf{x}=(0,0)$.  The remaining endpoint list
is $(0)$, and both $\beta$ and $F$ are empty sums equal to $0$.  Hence assume
$r\ge2$.  The deletion of $x_j$ removes the zero-weight pair
\begin{align*}
  \Delta_jx_{r+1-j}=0,
  \qquad
  \Delta_j\psi(x_{r+1-j})=0.
\end{align*}
We now check the only other possible local changes.

First assume that $q\notin\{j,j-1\}$ and $j>1$.  Then $q<r$.  In the
size-degree list, the two pairs affected by deleting the endpoint $x_q$ are
$
  (\Delta_q,x_{r+1-q}), (\Delta_{q+1},x_{r-q}).
$
Since $q=r+1-j$, these are
\begin{align*}
  (\Delta_q,x_j),\qquad (\Delta_{q+1},x_{j-1}).
\end{align*}
Deleting $x_q$ does not remove any class weight from the graph-theoretic list; it only
merges the two adjacent gaps $\Delta_q$ and
$\Delta_{q+1}$
into the single gap $x_{q+1}-x_{q-1}=\Delta_q+\Delta_{q+1}$.  The new pair has
weight $\Delta_q+\Delta_{q+1}$ and degree label $x_j$.  Since the deleted
endpoint $x_j$ is a zero-gap endpoint, we have $x_j=x_{j-1}$.  Hence the
edge-density contribution and $F$-contribution are unchanged:
\begin{align*}
  \Delta_qx_j+\Delta_{q+1}x_{j-1}=(\Delta_q+\Delta_{q+1})x_j,
\quad\text{ and }\quad
  \Delta_q\psi(x_j)+\Delta_{q+1}\psi(x_{j-1})=(\Delta_q+\Delta_{q+1})\psi(x_j).
\end{align*}

If $j=1$, then $q=r$.  Here $x_1=x_0=0$, and the endpoint $x_q=x_r$ corresponds to the last pair, whose degree label is $x_1=0$.  Deleting this last pair also changes neither sum, since
\begin{align*}
  \Delta_r x_1=0,
  \qquad
  \Delta_r\psi(x_1)=0.
\end{align*}

It remains to consider the two central cases, where the zero-weight pair and the anti-diagonal role of $x_j$ overlap.  If $q=j$, then $x_j$ is the degree label of the same zero-weight pair, and deleting $x_j$ removes only
\begin{align*}
  \Delta_jx_j=0,
  \qquad
  \Delta_j\psi(x_j)=0.
\end{align*}
If $q=j-1$, then deleting $x_j$ removes the zero-weight pair
\begin{align*}
  \Delta_jx_{j-1}=0,
  \qquad
  \Delta_j\psi(x_{j-1})=0.
\end{align*}
The only nonzero pair whose degree label crosses the deleted endpoint is the $q$-th pair.  Its edge-density contribution changes from $\Delta_qx_j$ to $\Delta_qx_{j-1}$, which is the same number since $x_j=x_{j-1}$.  Likewise,
\begin{align*}
  \Delta_q\psi(x_j)=\Delta_q\psi(x_{j-1}).
\end{align*}
The remaining endpoint list is still weakly increasing, so it is a weak closed staircase pattern.  The cases above account for every pair whose weight or degree label can change; all other pairs keep the same weight and the same degree label, up to relabeling.  Therefore the two sums are unchanged.
\end{proof}

\begin{remark}
The endpoint deletion in Lemma \ref{lem:zero-gap} is used only in the finite-dimensional proof of the closed-pattern estimate \eqref{eq:finite-dimensional-estimate}.  It removes endpoints in one finite list $0=x_0\le\cdots\le x_r\le1$ and preserves $\beta_r$ and $F_r$ exactly; no finite graph is modified.  Here ``finite'' means an arbitrary natural number $r$, not a number bounded along a graph sequence.  After \eqref{eq:finite-dimensional-estimate} is proved for every finite $r$, we apply it to each graph $G_n$ with $r=r_n$, using the full normalized pattern $\mathbf{x}^{(n)}=(x_0^{(n)},\ldots,x_{r_n}^{(n)})$.
Thus an equality $x_j=x_{j-1}$ in Lemma \ref{lem:zero-gap} represents a zero
gap in the closed pattern, not a merger of two positive finite twin classes.
\end{remark}

\begin{lemma}\label{lem:first-variation}
For $0\le a<b\le1$, set
\begin{align*}
  A(a,b)\coloneqq\frac{\psi(b)-\psi(a)}{b-a}=a+b-a^2-ab-b^2.
\end{align*}
Let $\mathbf{x}=(x_1,\ldots,x_r)$ satisfy
  $0=x_0<x_1<\cdots<x_r<1.$
If $\mathbf{x}$ is a local minimum of $F_r$ among nearby tuples with the same value of $\beta_r$, then there is a constant $\Lambda$ such that
\begin{align}\label{eq:first-var-balance}
  A(x_{r-j},x_{r+1-j})+\psi'(x_j)=\Lambda,
  \qquad j=1,\ldots,r.
\end{align}
If instead $0=x_0<x_1<\cdots<x_{r-1}<x_r=1$ and the same local minimality holds with $x_r$ fixed, then the same identity holds for $j=1,\ldots,r-1$.
\end{lemma}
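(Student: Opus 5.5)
The plan is to treat this as a standard Lagrange-multiplier computation. The strict inequalities $0=x_0<x_1<\cdots<x_r<1$ make $\mathbf{x}$ an interior point of an open subset of $\R^r$. On that set both $\beta_r$ and $F_r$ are polynomials in $(x_1,\ldots,x_r)$, with $x_0=0$ held fixed. So I would compute the two gradients explicitly, check that $\nabla\beta_r$ does not vanish, and invoke the Lagrange multiplier theorem. That theorem follows from the implicit function theorem (Fact~\ref{fact:ift}) by solving the constraint for one coordinate: since $\beta_r(\mathbf{x}+t\mathbf{e}_k+\eta(t)\mathbf{e}_\ell)$ stays constant, local minimality forces the directional derivative of $F_r$ along every tangent direction of the level set to vanish. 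Hence $\nabla F_r=\lambda\nabla\beta_r$ for some $\lambda$.

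The key step is the bookkeeping of where $x_j$ occurs. It enters $\beta_r=\sum_i\Delta_i x_{r+1-i}$ in three places:
\begin{itemize}
\item as the right endpoint of $\Delta_j$;
\item as the left endpoint of $\Delta_{j+1}$, when $j<r$;
\item as the degree label of the term $i=r+1-j$.
\end{itemize}
By the product rule these give
\begin{align*}
  \frac{\partial\beta_r}{\partial x_j}=x_{r+1-j}-x_{r-j}+\Delta_{r+1-j}=2\Delta_{r+1-j},
\end{align*}
where for $j=r$ the middle term is absent and the formula reads $x_1+\Delta_1=2x_1=2\Delta_1$ because $x_0=0$. In exactly the same way,
\begin{align*}
  \frac{\partial F_r}{\partial x_j}=\psi(x_{r+1-j})-\psi(x_{r-j})+\Delta_{r+1-j}\,\psi'(x_j).
\end{align*}
I would double-check the diagonal case $2j=r+1$, where the endpoint role and the label role fall in the same term $\Delta_jx_j$. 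There the product rule still gives the same two formulas, so no separate argument is needed.

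Since all gaps are strictly positive, every component $2\Delta_{r+1-j}$ of $\nabla\beta_r$ is positive. The gradient is therefore nonzero and the multiplier rule applies. Dividing the $j$-th equation $\partial F_r/\partial x_j=\lambda\,\partial\beta_r/\partial x_j$ by $\Delta_{r+1-j}>0$, and using $(\psi(b)-\psi(a))/(b-a)=A(a,b)$, gives \eqref{eq:first-var-balance} with $\Lambda=2\lambda$.

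For the second statement I would keep $x_r=1$ fixed and use only $x_1,\ldots,x_{r-1}$ as variables. These still lie in an open region, and the same partial derivatives hold for $j\le r-1$. Their $\beta_r$-components $2\Delta_{r+1-j}$ with $r+1-j\ge2$ are still positive, so the identical argument gives the identity for $j=1,\ldots,r-1$. The only real obstacle is the index bookkeeping at the boundary indices $j=r$ and $j=(r+1)/2$; the analytic part is routine.
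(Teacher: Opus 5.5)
Your proposal is correct and is essentially the paper's argument. Both rest on the same partial derivatives $\partial\beta_r/\partial x_j=2\Delta_{r+1-j}$ and $\partial F_r/\partial x_j=\Delta_{r+1-j}\bigl(A(x_{r-j},x_{r+1-j})+\psi'(x_j)\bigr)$, combined with the implicit function theorem on pairs of coordinates; the paper eliminates $\eta'(0)$ directly instead of phrasing the result as a Lagrange multiplier, but the computation is the same.

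One small remark on the index bookkeeping: besides $2j=r+1$, the overlap $2j=r$ also occurs, where the label term coincides with the left-endpoint term $\Delta_{j+1}x_j$. As you say, the product rule handles every such overlap automatically.
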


\begin{proof}
Let $J=\{1,\ldots,r\}$ in the case $x_r<1$, and let $J=\{1,\ldots,r-1\}$ in the boundary case $x_r=1$.  These are exactly the coordinates that may be varied.  Write
\begin{align}\label{eq:threshold-Fr-asymptotic}
  \beta_r(\mathbf{x})=\sum_{i=1}^r \Delta_i x_{r+1-i}=\sum_{i=1}^r (x_i-x_{i-1}) x_{r+1-i},\,\,
  F_r(\mathbf{x})=\sum_{i=1}^r \Delta_i\psi(x_{r+1-i})=\sum_{i=1}^r (x_i-x_{i-1})\psi(x_{r+1-i}).
\end{align}
Fix $j\in J$ and put $q\coloneqq r+1-j$.  Differentiating the summands involving $x_j$ gives, in all cases,
\begin{align}\label{eq:Fr-partial}
  \frac{\partial F_r}{\partial x_j}&=\psi(x_q)-\psi(x_{q-1})+\Delta_q\psi'(x_j)
  =\Delta_q\left(A(x_{q-1},x_q)+\psi'(x_j)\right)\notag\\&=\Delta_{r+1-j}\left(A(x_{r-j},x_{r+1-j})+\psi'(x_j)\right).
\end{align}
Replacing $\psi(t)$ by $t$ in the same calculation gives
\begin{align}\label{eq:betar-partial}
  \frac{\partial \beta_r}{\partial x_j}=2\Delta_{r+1-j}>0,
\end{align}
where the last inequality follows from  $\Delta_{r+1-j}>0$. If $|J|\le1$, there is at most one displayed quantity and hence it is constant.  Otherwise choose distinct $j,k\in J$.  Put
\begin{align*}
  H(s,\eta)\coloneqq
  \beta_r(x_1,\ldots,x_j+s,\ldots,x_k+\eta,\ldots,x_r)-\beta_r(\mathbf{x}).
\end{align*}
Then $H(0,0)=0$ and
\begin{align*}
  \frac{\partial H}{\partial\eta}(0,0)=\frac{\partial\beta_r}{\partial x_k}
  =2\Delta_{r+1-k}>0.
\end{align*}
By Fact \ref{fact:ift}, there is a differentiable function $\eta(s)$ with
$\eta(0)=0$ and $H(s,\eta(s))=0$ for all sufficiently small $|s|$.  Define the
nearby vector $\widetilde{\mathbf{x}}(s)$ by
\begin{align*}
  \widetilde{x}_i(s)=x_i\quad(i\notin\{j,k\}),\qquad
  \widetilde{x}_j(s)=x_j+s,\qquad
  \widetilde{x}_k(s)=x_k+\eta(s),
\end{align*}
so that
$
  \beta_r(\widetilde{\mathbf{x}}(s))=\beta_r(\mathbf{x}).
$
After shrinking $|s|$ if necessary, the vector $\widetilde{\mathbf{x}}(s)$ stays in the
same strict region.

Along the path $s\mapsto\widetilde{\mathbf{x}}(s)$ the edge density is fixed exactly.
Since $\mathbf{x}$ is a local minimum under this fixed-density constraint, the
one-variable function $s\mapsto F_r(\widetilde{\mathbf{x}}(s))$ has derivative $0$ at
$s=0$.  Thus
\begin{align*}
  0=\frac{\partial F_r}{\partial x_j}
  +\frac{\partial F_r}{\partial x_k}\eta'(0).
\end{align*}        
On the other hand, differentiating the identity
$\beta_r(\widetilde{\mathbf{x}}(s))=\beta_r(\mathbf{x})$ at $s=0$ gives
\begin{align*}
  0=\frac{\partial \beta_r}{\partial x_j}
  +\frac{\partial \beta_r}{\partial x_k}\eta'(0).
\end{align*}
Eliminating $\eta'(0)$ gives
\begin{align*}
  \frac{\partial F_r/\partial x_j}{\partial \beta_r/\partial x_j}
  =
  \frac{\partial F_r/\partial x_k}{\partial \beta_r/\partial x_k}.
\end{align*}
Using \eqref{eq:Fr-partial} and \eqref{eq:betar-partial}, this becomes
\begin{align*}
  A(x_{r-j},x_{r+1-j})+\psi'(x_j)
  =A(x_{r-k},x_{r+1-k})+\psi'(x_k).
\end{align*}
Since $j,k\in J$ were arbitrary, all these quantities are equal to a common constant $\Lambda$.
\end{proof}

\begin{lemma}\label{lem:one-two-gaps}
If a strict threshold pattern $\mathbf{x}$ with $x_r<1$ has at most two positive gaps, then $F_r(\mathbf{x})\ge M(\beta_r(\mathbf{x}))$.
\end{lemma}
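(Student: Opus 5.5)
Since a strict threshold pattern has all of its gaps $\Delta_i=x_i-x_{i-1}$ positive, the hypothesis ``at most two positive gaps'' simply means $r\le 2$. I would therefore argue by cases on $r$. The whole argument rests on one observation: by Facts~\ref{fact:qs-endpoint} and~\ref{fact:qc-endpoint} we have $M(\beta)\le \min\{c(\beta),\cc(\beta)\}$. Hence it suffices to show that $F_r(\mathbf{x})\ge c(\beta_r(\mathbf{x}))$ or $F_r(\mathbf{x})\ge \cc(\beta_r(\mathbf{x}))$. More generally, it suffices to show $F_r(\mathbf{x})\ge C_{\beta}(z)$ for some $z\in J_\beta$.

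\emph{Cases $r=0$ and $r=1$.} For $r=0$ both sides vanish, since $M(0)=0$ (take $z=0\in J_0$). For $r=1$, write $x_1=a$. The size--degree list is the single pair $(a,a)$, so
\begin{align*}
\beta_1=a^2,\qquad F_1=a\psi(a)=a^3(1-a)=c(\beta_1).
\end{align*}
Fact~\ref{fact:qc-endpoint} then gives $F_1=C_{\beta_1}(\sqrt{\beta_1})\ge M(\beta_1)$.

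\emph{Case $r=2$.} Write $0<a=x_1<b=x_2<1$. The list is $(a,b),(b-a,a)$, so
\begin{align*}
\beta=2ab-a^2,\qquad F=a\psi(b)+(b-a)\psi(a).
\end{align*}
This describes a split graph: a clique of weight $a$, an independent set of weight $b-a$ complete to it, and isolated vertices. Fix $\beta$ and eliminate $b=(\beta+a^2)/(2a)$. The constraint $a<b\le 1$ becomes $a\in[1-\sqrt{1-\beta},\sqrt\beta)$, which is exactly the interval $J_\beta$. Let $g(a)\coloneqq F$ along this curve. I will check the two endpoints:
\begin{itemize}
\item At $a=\sqrt\beta$ we get $b=a$, hence $g=a^3(1-a)=c(\beta)$.
\item At $a=1-\sqrt{1-\beta}$ we get $b=1$, hence $\psi(b)=0$ and $g=(1-a)a^2(1-a)=\cc(\beta)$ (compare Fact~\ref{fact:qs-endpoint}).
\end{itemize}
So it suffices to show that $g$ has no interior minimum on $J_\beta$. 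In that case $g(a)\ge\min\{c(\beta),\cc(\beta)\}\ge M(\beta)$ for every $a\in J_\beta$, and in particular on the admissible range $b<1$.

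\emph{Main obstacle: no interior minimum.} The key step is proving that $g$ has no interior minimum. I would first compute $g''$ directly, using
\begin{align*}
b'(a)=\frac{a-b}{a},\qquad b''(a)=\frac{2(b-a)}{a^{2}},
\end{align*}
in the same spirit as the fiber computation~\eqref{eq:Fplus-second}, and try to show $g''\le 0$ (concavity). If concavity fails on part of the range, I would fall back on Lemma~\ref{lem:first-variation} with $r=2$. At any interior local minimum it gives the balance equations
\begin{align*}
A(a,b)+\psi'(a)=A(0,a)+\psi'(b)=\Lambda,
\end{align*}
which are equivalent to the polynomial relation
\begin{align*}
A(a,b)-A(0,a)=\psi'(b)-\psi'(a).
\end{align*}
Since $A(a,b)-A(0,a)=(b-a)(1-2a-b)$ and $\psi'(b)-\psi'(a)=(b-a)(2-3a-3b)$, and $b>a$, this reduces to $1-2a-b=2-3a-3b$, that is, $a+2b=1$.
\begin{itemize}
\item On the line $a+2b=1$, I would substitute into $g''$ (or the second variation along the constraint curve) and check that the point is a local maximum, or at worst degenerate.
\item Alternatively, I would compare $g$ directly on this line with $\min\{c(\beta),\cc(\beta)\}$.
\end{itemize}
Either route excludes an interior minimum and completes the case $r=2$.
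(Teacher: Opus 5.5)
Your reduction to $r\le 2$, the cases $r\le 1$, the elimination $b=(\beta+a^2)/(2a)$ and the two endpoint values $c(\beta)$, $\cc(\beta)$ all match the paper. The conclusion you need, that the minimum of $g$ over $J_\beta$ is attained at an endpoint, is true. The gap is in the step you flag as the main obstacle: both of your routes to that conclusion fail.

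The function $g$ is not concave in general, and it has nondegenerate interior local minima. Take $\beta=18/625$, $a=2/25$, $b=11/50$. Then $2ab-a^2=\beta$ and $1-\sqrt{1-\beta}\approx 0.0145<a<\sqrt\beta\approx 0.1697$. One computes $g(0.07)\approx 0.003858$, $g(0.08)\approx 0.003844$ and $g(0.09)\approx 0.003855$. So $a=2/25$ is a strict interior local minimum, and its value is even below $c(\beta)\approx 0.00406$. Hence neither concavity nor ``every critical point is a local maximum or degenerate'' can be established.

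Your critical locus is also miscomputed. In fact $A(a,b)-A(0,a)=b(1-a-b)$, not $(b-a)(1-2a-b)$. So the balance equation is the conic $3a^2-2a+ab+b-2b^2=0$, which the example above satisfies, and not the line $a+2b=1$. As a minor point, $b''(a)=(2b-a)/a^2$, not $2(b-a)/a^2$.

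The missing idea is to bound the value of $g$ at an interior local minimum rather than to exclude such minima. The bound has to be against $\cc(\beta)$, since the example shows that $c(\beta)$ can be beaten. The paper writes $g'(a)=(a^2-\beta)R_\beta(a)/(4a^3)$ with $R_\beta(a)=6a^4-3a^3-\beta a^2+\beta a-\beta^2$. Let $u$ be an interior minimizer and $v=b(u)$.
\begin{itemize}
\item $R_\beta(u)=0$ gives the conic above.
\item $g''(u)\ge 0$ gives $R_\beta'(u)\le 0$, that is, $v(1-2u)+13u^2-5u\le 0$.
\item This second condition forces $u<1/2$; otherwise the left side is at least $13u^2-7u+1>0$.
\item Given $u<1/2$, the conic forces $v<1-u$.
\end{itemize}
Consequently $g(u)\ge uv^2(1-v)>u^2v^2\ge \beta^2/4\ge \cc(\beta)$, using $\beta=2uv-u^2\le 2uv$ and Fact~\ref{fact:cc-square-bound}.

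Your second fallback, comparing $g$ on the critical locus with $\min\{c(\beta),\cc(\beta)\}$, points in this direction. Without the correct locus, the second-order condition and the estimate $g>\beta^2/4\ge\cc(\beta)$, it does not yet constitute a proof.
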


\begin{proof}
If there is one positive gap, the graph is a clique plus isolated vertices.  Then $\beta=x_1^2$ and
\begin{align*}
  F=x_1\psi(x_1)=x_1^3(1-x_1)=c(\beta)\ge M(\beta),
\end{align*}
where the last inequality follows from Fact \ref{fact:qc-endpoint}.
Now assume that there are two positive gaps.  Write $0<x_1<x_2<1$.  Then
\begin{align*}
    \beta(x_1,x_2)=x_1x_2+(x_2-x_1)x_1=2x_1x_2-x_1^2\quad\text{ and }\quad
  F(x_1,x_2)=(x_2-x_1)\psi(x_1)+x_1\psi(x_2).
  \end{align*}
For fixed $\beta$, regard $x_1$ as the free first level and set
\begin{align*}
  x_2(x_1)\coloneqq\frac{\beta+x_1^2}{2x_1}=\frac{x_1}{2}+\frac{\beta}{2x_1}.
\end{align*}
The corresponding one-variable two-gap value is
\begin{align}\label{eq:Fbeta-two}
  F_\beta(x_1)\coloneqq F(x_1,x_2(x_1))=\left(\frac{\beta-x_1^2}{2x_1}\right)\psi(x_1)+x_1\psi\left(\frac{\beta+x_1^2}{2x_1}\right).
\end{align}
The conditions $x_1<x_2(x_1)<1$ are equivalent to
\begin{align}\label{eq:two-gap-range}
  1-\sqrt{1-\beta}<x_1<\sqrt\beta.
\end{align}
Indeed, $x_1<x_2(x_1)$ if and only if $x_1^2<\beta$. Moreover,
$x_2(x_1)<1$ if and only if $x_1>1-\sqrt{1-\beta}.$ If $x_2=x_1$, then $x_1=\sqrt{\beta}$. If $x_2=1$, then $x_1=1-\sqrt{1-\beta}$.
Thus the two endpoints are not strict two-gap points; they are boundary
degenerations of the closed interval.  Their boundary values are
\begin{align*}
  F(1-\sqrt{1-\beta},1)=(1-\beta)(1-\sqrt{1-\beta})^2=\cc(\beta),
  \qquad
  F(\sqrt\beta,\sqrt\beta)=\beta^{3/2}(1-\sqrt{\beta})=c(\beta).
\end{align*}
By Facts \ref{fact:qs-endpoint} and \ref{fact:qc-endpoint},
\begin{align}\label{eq:M-below-endpoints}
  M(\beta)\le\min\{\cc(\beta),c(\beta)\}.
\end{align}
It remains to prove that there is no $x_1$ in \eqref{eq:two-gap-range} such
that
\begin{align*}
  F_\beta(x_1)<\min\{\cc(\beta),c(\beta)\}.
\end{align*}
Suppose otherwise.  Since the two boundary degenerations have values
$\cc(\beta)$ and $c(\beta)$, the function $F_\beta$ has an interior minimizer
$u$ with
\begin{align*}
  F_\beta(u)<\min\{\cc(\beta),c(\beta)\}.
\end{align*}
Put $v\coloneqq x_2(u)$.  Then
\begin{align*}
  F_\beta'(u)=0,
  \qquad
  F_\beta''(u)\ge0,
  \qquad
  0<u<v<1.
\end{align*}
Differentiation gives
\begin{align*}
  F_\beta'(x_1)=\frac{(x_1^2-\beta)R_\beta(x_1)}{4x_1^3},
\end{align*}
where
\begin{align*}
  R_\beta(x_1)=-\beta^2-\beta x_1^2+\beta x_1+6x_1^4-3x_1^3.
\end{align*}
At $x_1=u$, we have $u^2-\beta=2u(u-v)<0$.  Hence $R_\beta(u)=0$.  Substituting $\beta=2uv-u^2$ into $R_\beta(u)=0$ gives
\begin{align*}
  R_{2uv-u^2}(u)=2u^2(-2v^2+uv+v+3u^2-2u)=0.
\end{align*}
Since $u>0$, this gives
\begin{align}\label{eq:two-gap-eq1}
  -2v^2+uv+v+3u^2-2u=0.
\end{align}
Write $H_\beta(x_1)=(x_1^2-\beta)/(4x_1^3)$.  Since $F_\beta'(x_1)=H_\beta(x_1)R_\beta(x_1)$ and $R_\beta(u)=0$, we have
\begin{align*}
  F_\beta''(x_1)=H_\beta(x_1)R_\beta'(x_1)+H_\beta'(x_1)R_\beta(x_1)\quad\text{and}\quad F_\beta''(u)=H_\beta(u)R_\beta'(u).
\end{align*}
Since $u^2-\beta=2u(u-v)<0$, we have $H_\beta(u)<0$. Note that $F_\beta''(u)\ge0$.  Therefore $R_\beta'(u)\le0$.  Since $\beta=2uv-u^2$, we have
\begin{align}\label{eq:two-gap-eq2}
  R_\beta'(u)=-2\beta u+\beta+24u^3-9u^2=2u(v(1-2u)+13u^2-5u)\le0,
\end{align}
We claim that $u+v<1$.  First $u<1/2$.  Indeed, if $u\ge1/2$, then $1-2u\le0$, and since $v<1$, we have
\begin{align*}
  (1-2u)+13u^2-5u\ge(1-2u)+13u^2-5u=13\left(u-\frac{7}{26}\right)^2+\frac{3}{52}\ge\frac34>0,
\end{align*}
a contradiction to \eqref{eq:two-gap-eq2}.  Hence $u<1/2$.  Let
\begin{align*}
  Q(z)\coloneqq-2z^2+uz+z+3u^2-2u.
\end{align*}
Then \eqref{eq:two-gap-eq1} says $Q(v)=0$, while
$
  Q(1-u)=2u-1<0,
$
and for every $z\ge1-u$,
\begin{align*}
  Q'(z)=-4z+u+1\le -4(1-u)+u+1=5u-3<0.
\end{align*}
So $Q(z)<0$ for every $z\ge1-u$.  Since $Q(v)=0$, we must have $v<1-u$, that is, $u+v<1$.

Therefore $1-v>u$, and
\begin{align*}
  F_\beta(u)=(v-u)u^2(1-u)+uv^2(1-v)\ge uv^2(1-v)>u^2v^2.
\end{align*}
Also $\beta=2uv-u^2\le2uv$, so
\begin{align*}
  u^2v^2\ge \frac{\beta^2}{4}.
\end{align*}
By Fact \ref{fact:cc-square-bound}, $\cc(\beta)\le\beta^2/4$.  Thus
\begin{align*}
  F_\beta(u)>\cc(\beta)\ge\min\{\cc(\beta),c(\beta)\},
\end{align*}
contradicting the choice of $u$.  Consequently every $x_1$ in \eqref{eq:two-gap-range} satisfies
\begin{align*}
  F_\beta(x_1)\ge\min\{\cc(\beta),c(\beta)\}\ge M(\beta).
\end{align*}
This proves the two-gap case.
\end{proof}

\begin{lemma}\label{lem:three-gaps}
If a strict threshold pattern $\mathbf{x}$ has three positive gaps and $x_3<1$, then $F_3(\mathbf{x})\ge M(\beta_3(\mathbf{x}))$.
\end{lemma}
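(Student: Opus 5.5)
We work with $0<x_1<x_2<x_3<1$ and write everything explicitly. The size--degree list is $(x_1,x_3),(x_2-x_1,x_2),(x_3-x_2,x_1)$, so
\begin{align*}
  \beta_3=x_1x_3+(x_2-x_1)x_2+(x_3-x_2)x_1,\qquad
  F_3=x_1\psi(x_3)+(x_2-x_1)\psi(x_2)+(x_3-x_2)\psi(x_1).
\end{align*}
Fix $\beta$ and minimize $F_3$ over the compact closure of the fixed-density slice $\{0\le x_1\le x_2\le x_3\le 1,\ \beta_3=\beta\}$.

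\textbf{Boundary of the slice.} The boundary pieces reduce to earlier cases.
\begin{itemize}
  \item If $x_1=0$, $x_2=x_1$ or $x_3=x_2$, some gap vanishes. Lemma~\ref{lem:zero-gap} then removes it without changing $\beta$ or $F$, leaving a pattern with at most two positive gaps, which is covered by Lemma~\ref{lem:one-two-gaps} (or by the trivial cases, including the degenerate closed endpoints, whose values are $c(\beta)$ or $\cc(\beta)$, both $\ge M(\beta)$ by Facts~\ref{fact:qs-endpoint} and~\ref{fact:qc-endpoint}).
  \item If $x_3=1$, the pattern is exactly the three-class construction. The top class $A$ of size $x_1$ has degree $1$, and $\psi(1)=0$. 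The class $B$ of size $x_2-x_1$ has degree $x_2$, and the class $C$ of size $1-x_2$ has degree $x_1$. Thus $z=x_2$, $h=x_1$, $\beta=z^2+2h(1-z)$, and $F_3=C_\beta(z)\ge M(\beta)$ directly from the definition. This is the reason the family $C_\beta$ appears.
\end{itemize}

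\textbf{Interior points.} It remains to rule out an interior minimizer with value below $M(\beta)$. At such a point Lemma~\ref{lem:first-variation} applies with $r=3$ and gives
\begin{align*}
  A(x_2,x_3)+\psi'(x_1)=A(x_1,x_2)+\psi'(x_2)=A(0,x_1)+\psi'(x_3)=\Lambda .
\end{align*}
These are two polynomial equations in $(x_1,x_2,x_3)$. My plan is to eliminate, or use resultants, to show that the system has no solution with $0<x_1<x_2<x_3<1$. If solutions do exist, I would instead show that every solution satisfies $F_3\ge\min\{\cc(\beta),c(\beta)\}$. The model for this is the two-gap argument: derive a linear-type inequality such as $x_1+x_3<1$ from the stationarity equations, then bound $F_3\ge x_1\psi(x_3)>x_1^2x_3^2\ge \beta^2/4\ge\cc(\beta)$. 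For that chain I would use $\beta\le 2x_1x_3$, which follows from $\beta=2x_1x_3-x_1^2+x_2^2-x_1x_2\cdot 0$... Checking directly, $\beta_3=2x_1x_3+x_2^2-2x_1x_2$, so I would need a bound like $x_2^2-2x_1x_2\le$ something. That may require handling the $x_2$ term separately.

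\textbf{Second-order fallback.} If the first-order equations alone do not suffice, I would add the second-order condition along the constraint, as in the two-gap proof where $F''_\beta(u)\ge0$ forced $u<1/2$. Concretely, I would parametrize the slice by $(x_1,x_3)$, solve the quadratic $\beta_3=\beta$ for $x_2$, and require the Hessian of the restricted $F$ to be positive semidefinite.

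\textbf{Main obstacle.} The hardest step is this interior analysis: showing that the three balance equations, possibly with the second-order condition, exclude interior minimizers or force $F_3\ge\cc(\beta)$. I would first try to find a polynomial certificate in the style of \eqref{eq:mixed-certificate}, expressing a target positive quantity as a combination of the two stationarity polynomials plus a manifestly positive remainder on the region.
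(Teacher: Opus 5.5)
Your framework (minimize over the closed fixed-$\beta$ slice, dispose of the boundary, then analyze interior minimizers) is sound. The boundary reductions are also correct. A vanishing gap is removed by Lemma~\ref{lem:zero-gap} and falls under Lemma~\ref{lem:one-two-gaps} or the endpoint values $c(\beta),\cc(\beta)$. On the face $x_3=1$ you correctly get $F_3=C_\beta(x_2)$ with $x_2\in J_\beta$. The gap is the interior case, which is the whole content of the lemma and which you leave as a list of strategies. Your main plan cannot work. The balance conditions of Lemma~\ref{lem:first-variation} reduce to $(x_3-x_2)(1-2x_3-x_2)=x_1(1-2x_1)$ and $(x_3-x_2)(1-2x_2-x_3)=(x_2-x_1)(1-2x_1-x_2)$. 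This system has interior solutions, e.g.\ $x_1=0.6$, $x_2\approx0.72$, $x_3\approx0.81$, so no elimination can show it is empty. At that point $x_1+x_3>1$, so stationarity alone also cannot give the inequality $x_1+x_3<1$ you wanted. Your single-term chain fails for the reason you noticed: $\beta_3=2x_1x_3+x_2(x_2-2x_1)$ exceeds $2x_1x_3$ once $x_2>2x_1$. Second-order information is needed, in a specific direction that your generic Hessian fallback does not identify.

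The missing idea is to restrict to the one-parameter fiber on which $x_2$ and $\beta$ are both fixed, i.e.\ $x_1(x_3-x_2)$ is constant. The paper writes $F_3$ on this fiber as $\Phi_{x_2,\beta}(x_1)$ for $x_1\in(h_\beta(x_2),x_2)$. Its two endpoint degenerations are $C_\beta(x_2)$ (at $x_3=1$) and the two-gap value $F_\beta(x_2)$ (at $x_1=x_2$). The paper proves $\Phi_{x_2,\beta}\ge\min\{C_\beta(x_2),F_\beta(x_2)\}$ by explicit factorizations, split according to the sign of $T=(\beta-x_2^2)(1-2x_2)+6x_2^2(1-x_2)^2$. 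Then $x_2\in J_\beta$ and Lemma~\ref{lem:one-two-gaps} conclude. Alternatively, your own framework closes on the same fiber. Put $d=x_3-x_2$ and $x_1=K/d$; the second derivative of $F_3$ in $x_3$ along the fiber is $-\tfrac{2x_1}{d}\,(3x_1^2-x_1+d^2)$, so at an interior minimizer $x_1<1/3$. The first-order condition along the fiber is exactly $d(1-2x_3-x_2)=x_1(1-2x_1)>0$, so $x_3<1/2$. Cauchy--Schwarz over the entire size--degree list, whose total weight is $x_3$, then gives $F_3\ge\tfrac{1-x_3}{x_3}\beta^2>\beta^2\ge\cc(\beta)\ge M(\beta)$. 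This is the argument of Lemma~\ref{lem:no-interior-xrless1}, which goes through verbatim for $r=3$. The point is that the lower bound comes from the whole list rather than from the single term $x_1\psi(x_3)$.
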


\begin{proof}
Let $0<x_1<x_2<x_3<1$.  Then
\begin{align}\label{eq:three-beta}
  \beta=x_1x_3+(x_2-x_1)x_2+(x_3-x_2)x_1=x_2^2+2x_1(x_3-x_2).
\end{align}
Since $\beta-x_2^2=2x_1(x_3-x_2)>0$,
\begin{align}\label{eq:three-domain}
  x_3=x_2+\frac{\beta-x_2^2}{2x_1},
  \qquad
  \frac{\beta-x_2^2}{2(1-x_2)}<\frac{\beta-x_2^2}{2(x_3-x_2)}=x_1<x_2.
\end{align}
The normalized $S_{2,1}$-density, namely the value of $F_3(\bf x)$, is
\begin{align}\label{eq:Phi-three}
  \Phi_{x_2,\beta}(x_1)\coloneqq\frac{\beta-x_2^2}{2x_1}\psi(x_1)
  +x_1\psi\left(x_2+\frac{\beta-x_2^2}{2x_1}\right)
  +(x_2-x_1)\psi(x_2).
\end{align}
We compare $\Phi_{x_2,\beta}$ with the two endpoint degenerations of the interval in \eqref{eq:three-domain}.  By~\eqref{eq:Jbeta-hbeta}, if
\begin{align*}
  x_1=\frac{\beta-x_2^2}{2(1-x_2)}=h_\beta(x_2),
\end{align*}
then $x_3=1$. Using the definition of $C_\beta(x)$ in~\eqref{eq:Cbeta} and $\psi(1)=0$, we have
\begin{align*}
  \Phi_{x_2,\beta}(h_\beta(x_2))=(1-x_2)\psi(h_\beta(x_2))+(x_2-h_\beta(x_2))\psi(x_2)=C_\beta(x_2).
\end{align*}
At the right endpoint $x_1=x_2$,
\begin{align*}
  \Phi_{x_2,\beta}(x_2)=\frac{\beta-x_2^2}{2x_2}\psi(x_2)+x_2\psi\left(\frac{\beta+x_2^2}{2x_2}\right)=F_\beta(x_2),
\end{align*}
where $F_\beta$ is the two-gap value from \eqref{eq:Fbeta-two}.
Set
\begin{align*}
  T\coloneqq(\beta-x_2^2)(1-2x_2)+6x_2^2(1-x_2)^2.
\end{align*}
Over the common denominator $8x_2^2(1-x_2)^2$, expansion gives
\begin{align}\label{eq:three-endpoint-diff}
  F_\beta(x_2)-C_\beta(x_2)=\frac{(\beta-x_2^2)(2x_2(1-x_2)-(\beta-x_2^2))T}{8x_2^2(1-x_2)^2}.
\end{align}
Indeed, $\beta-x_2^2>0$ by \eqref{eq:three-beta}, and \eqref{eq:three-domain} gives
$
  \beta-x_2^2<2x_1(1-x_2)<2x_2(1-x_2).
$
Hence the sign of $F_\beta(x_2)-C_\beta(x_2)$ is the sign of $T$.

First assume $T\ge0$.  Then $C_\beta(x_2)\le F_\beta(x_2)$.  Expanding \eqref{eq:Phi-three} and subtracting $C_\beta(x_2)$ gives
\begin{align}\label{eq:three-C-diff}
  \Phi_{x_2,\beta}(x_1)-C_\beta(x_2)
  =\frac{(\beta-x_2^2)(2x_1(1-x_2)-(\beta-x_2^2))N}{8x_1^2(1-x_2)^2},
\end{align}
where
\begin{align*}
N={}&(\beta-x_2^2)((1-x_2)^2-x_1^2)
+2x_1^2(1-x_2)(1-x_1)+4x_1x_2(1-x_2)^2.
\end{align*}
The factors outside $N$ in \eqref{eq:three-C-diff} are nonnegative by \eqref{eq:three-beta} and \eqref{eq:three-domain}.  If $x_1\le1-x_2$, then $N\ge0$.  It remains in this case to consider $x_1>1-x_2$.  Then $x_2>1/2$, and $N$ decreases as the quantity $\beta-x_2^2$ increases.  From $T\ge0$,
\begin{align*}
  \beta-x_2^2\le \frac{6x_2^2(1-x_2)^2}{2x_2-1}.
\end{align*}
Therefore
\begin{align}\label{eq:N-lower}
  N\ge
  \left.N\right|_{\beta-x_2^2=\frac{6x_2^2(1-x_2)^2}{2x_2-1}}
  =\frac{2(x_2-x_1)(x_2-1)M_0}{2x_2-1},
\end{align}
where
\begin{align*}
  M_0=-(2x_2-1)x_1^2-x_1(1-x_2)\left(3\left(x_2-\frac{1}{3}\right)^2+\frac23\right)-3x_2(1-x_2)^3<0.
\end{align*}
The lower bound in \eqref{eq:N-lower} is nonnegative, since $x_2-x_1>0$, $x_2-1<0$, $2x_2-1>0$ and $M_0<0$.  Hence $N\ge0$, and $\Phi_{x_2,\beta}(x_1)\ge C_\beta(x_2)$.

Now assume $T\le0$.  Then $F_\beta(x_2)\le C_\beta(x_2)$.  Expanding \eqref{eq:Phi-three} and subtracting $F_\beta(x_2)$ gives
\begin{align}\label{eq:three-F-diff}
  \Phi_{x_2,\beta}(x_1)-F_\beta(x_2)
  =\frac{(\beta-x_2^2)(x_2-x_1)W}{8x_1^2x_2^2},
\end{align}
where
\begin{align*}
W={}&-(x_1+x_2)(\beta-x_2^2)^2+2(\beta-x_2^2)x_1x_2(1-3x_2)+4x_1^2x_2^2(x_1+x_2-1).
\end{align*}
Since $T\le 0$, $\beta-x_2^2\ge0$ and $6x_2^2(1-x_2)^2\ge0$, we have $x_2>1/2$.  The assumption $T\le0$ gives
\begin{align*}
  \beta-x_2^2\ge \frac{6x_2^2(1-x_2)^2}{2x_2-1}.
\end{align*}
Together with \eqref{eq:three-domain}, this places $\beta-x_2^2$ in the interval
\begin{align}\label{eq:b-x2-bound}
  \frac{6x_2^2(1-x_2)^2}{2x_2-1}\le \beta-x_2^2\le 2x_1(1-x_2).
\end{align}
The expression $W$ is a concave quadratic in $\beta-x_2^2$, so its minimum on this interval is attained at an endpoint.  Put
\begin{align*}
  P\coloneqq x_1(2x_2-1)-3x_2^2(1-x_2).
\end{align*}
Substituting the two endpoint values of $\beta-x_2^2$ into $W$ gives
\begin{align*}
  \left.W\right|_{\beta-x_2^2=\frac{6x_2^2(1-x_2)^2}{2x_2-1}}
  =-\frac{4x_2^2PM_0}{(2x_2-1)^2}
\quad\text{ and }\quad
  \left.W\right|_{\beta-x_2^2=2x_1(1-x_2)}=4x_1^2P.
\end{align*}

Since $1>x_2>1/2$, we have $2x_2-1>0$ and $2(1-x_2)>0$.  Comparing the two
endpoints in \eqref{eq:b-x2-bound} and multiplying by
$(2x_2-1)/(2(1-x_2))$ gives
$
  3x_2^2(1-x_2)\le x_1(2x_2-1).
$
Thus $P\ge0$.
Since $M_0<0$, both endpoint values of $W$ are nonnegative.  Therefore $W\ge0$ throughout the interval, and $\Phi_{x_2,\beta}(x_1)\ge F_\beta(x_2)$.

Thus in all cases
\begin{align*}
  \Phi_{x_2,\beta}(x_1)\ge\min\{C_\beta(x_2),F_\beta(x_2)\}.
\end{align*}
Here $x_2\in J_\beta$: from $\beta-x_2^2>0$ we get $x_2<\sqrt\beta$, while $x_3<1$ and \eqref{eq:three-beta} give
\begin{align*}
  \beta<x_2^2+2x_1(1-x_2)<x_2^2+2x_2(1-x_2)=2x_2-x_2^2,
\end{align*}
equivalently $x_2>1-\sqrt{1-\beta}$.  Thus $C_\beta(x_2)\ge M(\beta)$ by definition, and $F_\beta(x_2)\ge M(\beta)$ by Lemma \ref{lem:one-two-gaps}.  Hence $\Phi_{x_2,\beta}(x_1)\ge M(\beta)$.
\end{proof}

\begin{lemma}\label{lem:no-interior-xrless1}
Let $\mathbf{x}=(x_0,\ldots,x_r)$ be a strict pattern satisfying $0=x_0<x_1<\cdots<x_r<1$.  If $\mathbf{x}$ is an interior local minimum of $F_r$ among $r$-class patterns with fixed edge density $\beta_r$, then
$
  F_r(\mathbf{x})\ge M(\beta_r(\mathbf{x})).
$
\end{lemma}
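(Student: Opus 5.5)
The plan is to show that an interior local minimum with $x_r<1$ can only occur when $r\le3$, and then quote Lemmas~\ref{lem:one-two-gaps} and~\ref{lem:three-gaps}. If $r\le 3$ there is nothing more to do: every gap is positive because $\mathbf{x}$ is strict and $x_r<1$, so those two lemmas give $F_r(\mathbf{x})\ge M(\beta_r(\mathbf{x}))$ directly. So assume $r\ge4$ and aim for a contradiction with local minimality.

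\emph{First order.} Lemma~\ref{lem:first-variation} applies, since all coordinates $x_1,\dots,x_r$ may be varied. It gives the balance system
\begin{align*}
  A(x_{r-j},x_{r+1-j})+\psi'(x_j)=\Lambda,\qquad j=1,\dots,r .
\end{align*}
Here $A(a,b)=a+b-a^2-ab-b^2$ and $\psi'(t)=2t-3t^2$. I would pair the equation for $j$ with the one for $q=r+1-j$, and also subtract the equations for consecutive indices $j$ and $j+1$. For $j\mapsto j+1$ the gap $(x_{q-1},x_q)$ moves to $(x_{q-2},x_{q-1})$. Writing $A(x_{q-1},x_q)-A(x_{q-2},x_{q-1})=(x_q-x_{q-2})(1-x_q-x_{q-1}-x_{q-2})$ and $\psi'(x_{j+1})-\psi'(x_j)=(x_{j+1}-x_j)(2-3x_j-3x_{j+1})$ gives the relation
\begin{align*}
  (x_q-x_{q-2})(1-x_q-x_{q-1}-x_{q-2})=(x_{j+1}-x_j)(2-3x_j-3x_{j+1}).
\end{align*}
Its left side is tied to the high end of the staircase and its right side to the low end. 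The target is a sign pattern that a strictly increasing sequence can satisfy for only a few consecutive indices. In particular I expect to show that the factors $1-x_q-x_{q-1}-x_{q-2}$ and $2-3x_j-3x_{j+1}$ change sign at most once along the staircase.

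\emph{Second order.} The first-order conditions alone will probably not bound $r$. I would then use the constrained second variation along the one-parameter path from the proof of Lemma~\ref{lem:first-variation}: vary $x_j$ by $s$ and compensate with $x_k$ via Fact~\ref{fact:ift}, so that $\beta_r$ stays fixed. At a local minimum the second derivative of $s\mapsto F_r(\widetilde{\mathbf{x}}(s))$ must be nonnegative. Using the Lagrangian $F_r-\tfrac{\Lambda}{2}\beta_r$ (the first variation gives $\partial F_r/\partial x_j=\tfrac{\Lambda}{2}\,\partial\beta_r/\partial x_j$), this reduces to a quadratic form in $(1,\eta'(0))$ built from $\partial^2/\partial x_j^2$, $\partial^2/\partial x_j\partial x_k$ and $\partial^2/\partial x_k^2$ of that Lagrangian.

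These Hessian entries are explicit. The diagonal ones are $\Delta_q\psi''(x_j)$ plus terms coming from $A$. The mixed term is nonzero only when $k=q$ or $k=q\pm1$, that is, when $x_k$ sits next to the anti-diagonal partner of $x_j$. My first choice would be the pair $j=1$ and $k=r$, or adjacent middle indices, where $\psi''(t)=2-6t$ has a definite sign. I expect the resulting form to be forced negative when $r\ge4$, mirroring the convexity argument in the upper branch (cf.\ \eqref{eq:Fplus-second}) with the sign reversed, and then to use the balance identities to eliminate $\Lambda$.

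The main obstacle is precisely this second-order step: choosing a $\beta$-preserving direction whose second variation is provably negative for every strict pattern with $r\ge4$ that satisfies the balance system. If no single universal direction works, the fallback is a case split on how many $x_i$ lie below $1/3$, since $\psi''(t)=2-6t$ changes sign there. In the region where $\psi$ is concave I would perturb the low-degree levels, and in the region where $\psi$ is convex I would perturb the top gaps. In either case the aim is to show that some gap can be pushed to zero without increasing $F_r$. This contradicts strict local minimality, or, once that gap is deleted via Lemma~\ref{lem:zero-gap}, reduces the length of the pattern, so that an induction on $r$ ends at the cases $r\le3$ handled by the earlier lemmas.
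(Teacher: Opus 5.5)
Your proposal has a genuine gap in its overall target, not just in a detail. You set out to prove that no interior local minimum with $x_r<1$ exists once $r\ge4$. Nothing supports this, and it very likely fails.

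Heuristically, for small $\beta$ with all levels small, $\psi(t)=t^2-t^3\approx t^2$. The fixed-density problem then becomes the scale-invariant one of minimizing $\sum_i\Delta_i x_{r+1-i}^2$ at fixed $\sum_i\Delta_i x_{r+1-i}$. Its optimal $r$-step staircases are strict and keep improving as $r$ grows, so one expects genuine interior local minima of $F_r$ at fixed $\beta_r$, with $x_r$ small, for $r\ge4$ as well. You can already see this for $r=2$. In Lemma~\ref{lem:one-two-gaps} the interior minimizer $u$ really exists: for small $\beta$ one has $v\approx 2u$ and value $\approx 0.96\,\beta^{3/2}$. The paper disposes of it by comparison with $\beta^2/4$, not by excluding it.

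Consistently, the pair you list first (vary $x_1$, compensate with $x_r$ at fixed density) is exactly the paper's path. Along it, the second derivative with respect to $x_r$ is $\frac{2x_1}{x_r-x_{r-1}}\bigl(x_1-3x_1^2-(x_r-x_{r-1})^2\bigr)$, which is positive when the levels are small. So it is not ``forced negative''; it only yields an inequality. Your fallback, pushing a gap to zero without increasing $F_r$, cannot be done by small moves at a strict local minimum, and you specify no global move.

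The missing idea is that nonexistence is not needed. You only need every such local minimum to fail to be a counterexample, which the paper proves in three steps.
\begin{enumerate}
\item From the same path it extracts the necessary condition $3x_1^2-x_1+(x_r-x_{r-1})^2\le0$. This gives $x_1<1/3$ and $x_r-x_{r-1}\le 1/(2\sqrt3)$.
\item The balance equations of Lemma~\ref{lem:first-variation} for $j=1$ and $j=r$ reduce to $(x_r-x_{r-1})(1-3x_r+x_r-x_{r-1})=x_1(1-2x_1)>0$. This forces $x_r<\frac{1+1/(2\sqrt3)}{3}<\frac12$.
\item All degree levels are at most $x_r$ and the total weight is $x_r$, so Cauchy's inequality gives $F_r\ge\frac{1-x_r}{x_r}\beta_r^2>\beta_r^2$. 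Meanwhile Facts~\ref{fact:qs-endpoint} and~\ref{fact:cc-square-bound} give $M(\beta_r)\le\cc(\beta_r)\le\beta_r^2/4$.
\end{enumerate}
Your consecutive-index balance relations and sign-change analysis are not needed. What your plan lacks is this comparison: a low top level forces $F_r>\beta_r^2$, which already exceeds the quasi-star value.
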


\begin{proof}
For $r\le3$, Lemmas \ref{lem:one-two-gaps} and \ref{lem:three-gaps} show that no such counterexample exists.  Assume $r\ge4$ and suppose, for a contradiction, that such a counterexample is an interior local minimum at fixed edge density.

Keep $x_2,\ldots,x_{r-1}$ fixed and vary only the two outside endpoints $x_1$ and $x_r$.  In the edge-density $\beta_r$, we have
\begin{align*}
  \beta_r=\sum_{i=1}^{r}(x_i-x_{i-1})x_{r+1-i}=x_1x_r-x_1x_{r-1}+x_1(x_r-x_{r-1})+x_2x_{r-1}+\sum_{i=3}^{r-1}(x_i-x_{i-1})x_{r+1-i}.
\end{align*}
Thus, preserving the edge density is equivalent to keeping
$
  K\coloneqq x_1(x_r-x_{r-1})
$
constant.  Thus, as $x_r$ varies,
\begin{align}\label{eq:x1-derivs}
  x_1(x_r)=\frac{K}{x_r-x_{r-1}},
  \qquad
  \frac{dx_1}{dx_r}=-\frac{x_1}{x_r-x_{r-1}},
  \qquad
  \frac{d^2x_1}{dx_r^2}=\frac{2x_1}{(x_r-x_{r-1})^2}.
\end{align}
For the normalized $S_{2,1}$-density,
\begin{align*}
  F_r(\mathbf{x})=\sum_{i=1}^r (x_i-x_{i-1})\psi(x_{r+1-i}).
\end{align*}
Under the same move, all terms are constant except
\begin{align*}
  G(x_r)\coloneqq(x_r-x_{r-1})\psi(x_1(x_r))+x_1(x_r)\psi(x_r)-x_1(x_r)\psi(x_{r-1})
\end{align*}
where $x_1=K/(x_r-x_{r-1})$.  Differentiating twice and substituting
$\psi''(t)=2-6t$ gives
\begin{align}\label{eq:G-second}
  G''(x_r)=\frac{2x_1(3x_1^2-x_1+(x_r-x_{r-1})^2)}{x_{r-1}-x_r}.
\end{align}
Since $x_{r-1}<x_r$ and local minimality forces $G''(x_r)\ge0$, we have
\begin{align}\label{eq:outside-ineq}
  3x_1^2-x_1+(x_r-x_{r-1})^2\le0.
\end{align}
Hence $3x_1^2-x_1<0$ and $x_1<1/3$.  Also, \eqref{eq:outside-ineq} gives
\begin{align*}
  (x_r-x_{r-1})^2\le x_1-3x_1^2\le \max_{0\le t\le1/3}(t-3t^2)=\frac1{12}.
\end{align*}
Therefore
\begin{align}\label{eq:gap-small}
  x_r-x_{r-1}\le\frac1{2\sqrt3}.
\end{align}
By Lemma \ref{lem:first-variation}, the quantities $A(x_{r-j},x_{r+1-j})+\psi'(x_j)$ are all equal.  Taking $j=1$ and $j=r$ gives
\begin{align}\label{eq:balance-1r}
  A(x_{r-1},x_r)+\psi'(x_1)=A(0,x_1)+\psi'(x_r).
\end{align}
Equation \eqref{eq:balance-1r} becomes
\begin{align}\label{eq:balance-simplified}
  (x_r-x_{r-1})(1-3x_r+x_r-x_{r-1})=x_1(1-2x_1).
\end{align}
We have $x_1(1-2x_1)\ge0$ since $0<x_1<1/3$, and $x_r-x_{r-1}>0$.  Hence
$
  1-3x_r+x_r-x_{r-1}>0.
$
Combining this with \eqref{eq:gap-small} gives
\begin{align}\label{eq:xr-less-half}
  x_r<\frac{1+x_r-x_{r-1}}{3}
  \le \frac{1+1/(2\sqrt3)}{3}<\frac12.
\end{align}
All positive degree levels are at most $x_r$, and their total weight is $\sum_i\Delta_i=x_r$.  Hence $1-x_{r+1-i}\ge1-x_r$, and
\begin{align*}
  F_r=\sum_i \Delta_i x_{r+1-i}^2(1-x_{r+1-i})
  \ge (1-x_r)\sum_i \Delta_i x_{r+1-i}^2.
\end{align*}
By Cauchy's inequality,
\begin{align*}
  \beta_r^2=\left(\sum_i\Delta_i x_{r+1-i}\right)^2
  \le\left(\sum_i\Delta_i\right)\left(\sum_i\Delta_i x_{r+1-i}^2\right)
  =x_r\sum_i\Delta_i x_{r+1-i}^2.
\end{align*}
Therefore
\begin{align}\label{eq:F-ge-beta-square}
  F_r\ge \frac{1-x_r}{x_r}\beta_r^2>\beta_r^2,
\end{align}
where the last inequality uses \eqref{eq:xr-less-half}.  On the other hand, Facts \ref{fact:qs-endpoint} and \ref{fact:cc-square-bound} give
\begin{align*}
  M(\beta_r)\le\cc(\beta_r)\le\frac{\beta_r^2}{4}<\beta_r^2<F_r,
\end{align*}
where $\beta_r>0$ since the pattern is strict.  This contradicts the counterexample assumption $F_r<M(\beta_r)$.
\end{proof}

\begin{lemma}\label{lem:no-interior-xreq1}
Let $\mathbf{x}=(x_0,\ldots,x_r)$ be a strict pattern satisfying $0=x_0<x_1<\cdots<x_{r-1}<x_r=1$.  If $\mathbf{x}$ is a relative interior local minimum of $F_r$ on the face $x_r=1$, at fixed edge density $\beta_r$, then
$
  F_r(\mathbf{x})\ge M(\beta_r(\mathbf{x})).
$
\end{lemma}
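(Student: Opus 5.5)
The plan is to split by the number $r$ of classes. For $r\le 3$ we identify $F_r$ directly with one of the comparison quantities. For $r\ge4$ we run the same ``outside pair'' second-variation argument as in Lemma~\ref{lem:no-interior-xrless1}, now adapted to the face $x_r=1$. On this face the first class $V_1$ has normalized degree $x_r=1$, so its pair $(\Delta_1,x_r)$ contributes $\Delta_1\psi(1)=0$ to $F_r$ but $\Delta_1=x_1$ to $\beta_r$.

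\emph{Small $r$.} For $r=1$ we have $\mathbf{x}=(0,1)$, $\beta=1$ and $F_1=\psi(1)=0=M(1)$. For $r=2$ we have $\beta=x_1+(1-x_1)x_1=2x_1-x_1^2$ and $F_2=(1-x_1)\psi(x_1)=\cc(\beta)$, and Fact~\ref{fact:qs-endpoint} gives $\cc(\beta)\ge M(\beta)$. For $r=3$ we have
\begin{align*}
\beta=x_2^2+2x_1(1-x_2),\qquad F_3=(x_2-x_1)\psi(x_2)+(1-x_2)\psi(x_1).
\end{align*}
Thus $x_1=h_\beta(x_2)$ and $F_3=C_\beta(x_2)$, which is exactly the three-class complement-split family. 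The inequalities $0<x_1<x_2<1$ give $x_2\in J_\beta$, exactly as at the end of the proof of Lemma~\ref{lem:three-gaps}, so $F_3\ge M(\beta)$ by definition of $M$.

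\emph{Case $r\ge4$.} Here the free coordinates are $x_1,\dots,x_{r-1}$. I would keep $x_2,\dots,x_{r-2}$ fixed and vary the pair $(x_1,x_{r-1})$, keeping the edge density fixed via Fact~\ref{fact:ift}. Only a few terms involve these two coordinates:
\begin{itemize}
\item in $\beta_r$, the terms $x_1\cdot 1$, $(x_2-x_1)x_{r-1}$, $(x_{r-1}-x_{r-2})x_2$ and $(1-x_{r-1})x_1$;
\item in $F_r$, the terms $(x_2-x_1)\psi(x_{r-1})$, $(x_{r-1}-x_{r-2})\psi(x_2)$ and $(1-x_{r-1})\psi(x_1)$.
\end{itemize}
The constraint $\beta_r=\text{const}$ is therefore a conic relation between $x_1$ and $x_{r-1}$. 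Parametrizing it as $x_1=x_1(x_{r-1})$, I would compute $G''$ for the restricted function $G(x_{r-1})$, analogously to \eqref{eq:G-second}. Local minimality then forces $G''\ge0$, which should give an inequality bounding $x_1$ and the gap $1-x_{r-1}$. I would combine this with Lemma~\ref{lem:first-variation} for $j=1$ and $j=r-1$:
\begin{align*}
A(x_{r-1},1)+\psi'(x_1)=A(x_1,x_2)+\psi'(x_{r-1}).
\end{align*}
The goal of these two conditions is to show that the degree levels $x_2,\dots,x_{r-1}$ carrying positive $\psi$-weight are all below $1/2$, or some similar quantitative restriction.

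Once that restriction is in hand, I would finish with a Cauchy bound as in \eqref{eq:F-ge-beta-square}. The universal class $V_1$ is discarded, since it contributes $x_1$ to $\beta$ and $0$ to $F$. On the remaining weight $1-x_1$, with degree levels at most $x_{r-1}$, Cauchy's inequality gives
\begin{align*}
F_r\ge \frac{1-x_{r-1}}{1-x_1}\,(\beta_r-x_1)^2 .
\end{align*}
I would then compare this against $\cc(\beta_r)$, using Fact~\ref{fact:cc-square-bound} or the explicit form of $\cc$.

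The main obstacle I expect is this last comparison. Subtracting the $x_1$ contribution of the universal class weakens the $\beta^2/4$ comparison used in Lemma~\ref{lem:no-interior-xrless1}. If the direct bound fails, the fallback is a reduction rather than a contradiction. I would show that the stationarity and second-order conditions force the middle classes $V_2,\dots,V_{r-1}$ to collapse, via a zero gap removed by Lemma~\ref{lem:zero-gap}, into the three-class configuration above. This would be done by proving that $F_r$ is concave in a suitable merging direction of two adjacent middle endpoints at fixed density, so that an interior strict local minimum cannot exist for $r\ge4$.
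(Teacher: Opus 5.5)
Your treatment of $r\le 3$ is correct and is the same as the paper's. For $r\ge 4$, however, there is a genuine gap. The two conditions you propose to extract do not bound the levels from above; in fact they point the other way.

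\emph{Your second-order condition is vacuous.} Put $a=\Delta_2=x_2-x_1$ and $g=\Delta_r=1-x_{r-1}$. Your fixed-density constraint is then exactly $ag=\mathrm{const}$. Since $a\psi(1-g)=ag(1-g)^2$, the restricted function satisfies
\begin{align*}
G''=2ag+\psi''(x_1)\frac{a^2}{g}=\frac{2a}{g}\bigl(g^2+(1-3x_1)a\bigr).
\end{align*}
So local minimality along your curve only gives $(1-x_{r-1})^2+(1-3x_1)(x_2-x_1)\ge0$. This holds automatically whenever $x_1\le 1/3$. In \eqref{eq:G-second}, by contrast, $G''\ge0$ was a strong restriction; here it constrains essentially nothing.

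\emph{Your balance equation pushes the levels up, not down.} Lemma~\ref{lem:first-variation} with $j=1$ and $j=r-1$, using $A(x_{r-1},1)=-x_{r-1}^2$, simplifies to
\begin{align*}
(x_2-x_1)(x_2+2x_1-1)=2x_{r-1}(1-x_{r-1})>0 .
\end{align*}
This forces $x_2+2x_1>1$, so every level $x_2,\dots,x_{r-1}$ exceeds $1/3$. Nothing in either condition pushes the levels below $1/2$. For instance, with $r=4$, the point $x_1=0.3$, $x_2=0.6$, $x_3\approx0.969$ satisfies both conditions. There your Cauchy bound $\frac{1-x_{r-1}}{1-x_1}(\beta_r-x_1)^2$ is about $0.012$, while $\cc(\beta_4)\approx0.060$. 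The factor $1-x_{r-1}$ may be small under the balance equation, so the Cauchy route cannot close this case.

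\emph{What is missing.} It is your fallback made concrete, and that is the whole content of the case $r\ge4$. Use the balance equations to place certain levels above $1/3$, where $\psi''<0$. Then exhibit an explicit fixed-density curve along which $F_r$ is strictly concave, contradicting local minimality; no comparison with $\cc$ or $\beta^2$ is needed. The paper needs three cases, because for small $r$ the moved endpoints are also endpoints of the weights one wants to keep fixed.
\begin{itemize}
\item For $r\ge6$ it raises $x_{r-2}$ by $t\Delta_2$ and lowers $x_{r-1}$ by $t\Delta_3$. The density is linear in these two endpoints with coefficients $2\Delta_3$ and $2\Delta_2$, so $\beta_r$ is preserved exactly. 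The second derivative is $\Delta_2\Delta_3\bigl(\Delta_2\psi''(x_{r-2})+\Delta_3\psi''(x_{r-1})\bigr)<0$.
\item For $r=5$ it varies $x_2$ with $x_1,x_3$ fixed and $x_4$ determined by the density. The second derivative is negative once $x_4>1/3$.
\item For $r=4$ it first derives $\Delta_2>\Delta_3$ from the balance between $j=2$ and $j=3$. It then varies $\Delta_2$ with $\Delta_1$ and $\Delta_2^2+2\Delta_2\Delta_3$ fixed.
\end{itemize}
Your own identity $x_2+2x_1>1$ already supplies the ``levels above $1/3$'' input in all three cases. What your proposal lacks is the choice of concave directions, so as written it does not prove the lemma for $r\ge4$.
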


\begin{proof}
We first spell out the cases $r\le3$.  Recall that 
\begin{align*}
  \beta_r({\bf x})=\sum_{i=1}^{r}(x_i-x_{i-1})x_{r+1-i}\quad\text{ and }\quad F_r({\bf x})=\sum_{i=1}^{r}(x_i-x_{i-1})\psi(x_{r+1-i})
\end{align*}If $r=1$, then $x_1=1$ and $F_1({\bf x})=x_1\psi(x_1)=0=M(1)$.  If $r=2$, write $x_2=1$.  Then
\begin{align*}
  \beta\coloneqq\beta_2({\bf x})=2x_1-x_1^2,
  \qquad
  F_2({\bf x})=(1-x_1)\psi(x_1)=x_1^2(1-x_1)^2.
\end{align*}
Thus $x_1=1-\sqrt{1-\beta}$, the left endpoint in Fact \ref{fact:qs-endpoint}.  Hence
\begin{align*}
  F_2({\bf x})=(1-\beta)(1-\sqrt{1-\beta})^2=C_\beta(x_1)=\cc(\beta)\ge M(\beta).
\end{align*}
If $r=3$, put $\beta\coloneqq\beta_3({\bf x})$, $h\coloneqq x_1$ and
$z\coloneqq x_2$.  The three gaps have limiting weights $h$, $z-h$ and
$1-z$, exactly the classes $A,B,C$ in the complement-split construction.  The
edge density is
\begin{align*}
  \beta=z^2+2h(1-z).
\end{align*}
Hence $h=(\beta-z^2)/(2(1-z))=h_\beta(z)$.  Since $0\le h\le z<1$,
\eqref{eq:Jbeta-hbeta} gives $z\in J_\beta$.  Using \eqref{eq:Cbeta}, the
normalized $S_{2,1}$-density is
\begin{align*}
  F_3({\bf x})=(z-h)z^2(1-z)+(1-z)h^2(1-h)=C_\beta(z)\ge M(\beta).
\end{align*}
It remains to exclude relative interior local-minimum counterexamples with $r\ge4$.

\medskip
\noindent\textbf{Four positive gaps.}
Let $0<x_1<x_2<x_3<x_4=1$.  Since $x_4=1$, the edge-density formula gives
\begin{align}
 \beta\coloneqq \beta_4({\bf x})
  &=x_1+(x_2-x_1)x_3+(x_3-x_2)x_2+(1-x_3)x_1=2x_1-2x_1x_3+2x_2x_3-x_2^2.
\end{align}
Writing $x_2=x_1+\Delta_2$ and $x_3=x_1+\Delta_2+\Delta_3$, this becomes
\begin{align}\label{eq:four-beta}
  \beta_4({\bf x})=2x_1-x_1^2+\Delta_2^2+2\Delta_2\Delta_3.
\end{align}
Also
\begin{align}\label{eq:four-F}
  F_4({\bf x})=\Delta_3\psi(x_2)+(1-x_3)\psi(x_1)+\Delta_2\psi(x_3).
\end{align}
First, we claim that $x_2>1/3$.  By Lemma \ref{lem:first-variation}, with $x_4=1$ fixed, the free-coordinate balance equations give
\begin{align}\label{eq:four-balance-first}
  A(x_3,1)+\psi'(x_1)=A(x_2,x_3)+\psi'(x_2).
\end{align}
Substituting $A(a,b)=a+b-a^2-ab-b^2$ and $\psi'(t)=2t-3t^2$ into
\eqref{eq:four-balance-first} gives
\begin{align}\label{eq:four-x2-third}
  x_3(1-x_2)=2x_1-3x_1^2+4x_2^2-3x_2.
\end{align}
Suppose instead that $x_2\le1/3$.  Since $0<x_1<x_2\le1/3$ and
$\psi'(t)=2t-3t^2$ is increasing on $[0,1/3]$, we have
$
  2x_1-3x_1^2<2x_2-3x_2^2.
$
Hence the right-hand side of \eqref{eq:four-x2-third} satisfies
\begin{align*}
  2x_1-3x_1^2+4x_2^2-3x_2
  <2x_2-3x_2^2+4x_2^2-3x_2
  =x_2^2-x_2<0.
\end{align*}
But the left-hand side of \eqref{eq:four-x2-third} is $x_3(1-x_2)>0$, a contradiction.  Thus $x_2>1/3$.

Next we prove that $\Delta_2>\Delta_3$.  Again by Lemma \ref{lem:first-variation},
\begin{align}\label{eq:four-balance-second}
  A(x_2,x_3)+\psi'(x_2)=A(x_1,x_2)+\psi'(x_3).
\end{align}
Writing $x_1=x_2-\Delta_2$ and $x_3=x_2+\Delta_3$, then expanding and simplifying
\eqref{eq:four-balance-second}, we obtain
\begin{align}\label{eq:Efourdiff}
  (\Delta_2-\Delta_3)(3x_2-1)=\Delta_2^2+2\Delta_3^2.
\end{align}
Since $x_2>1/3$ and $\Delta_2^2+2\Delta_3^2>0$, \eqref{eq:Efourdiff} forces
$\Delta_2>\Delta_3$.  Hence $0<\Delta_3<\Delta_2$.

Now fix $\Delta_1$ and the value of
$\kappa\coloneqq\Delta_2^2+2\Delta_2\Delta_3.$
By \eqref{eq:four-beta}, this keeps $\beta$ fixed.  Along
\begin{align}\label{eq:Delta3-curve}
  \Delta_3(\Delta_2)=\frac{\kappa-\Delta_2^2}{2\Delta_2},
\end{align}
we preserve $\beta$.  Let primes denote differentiation with respect to $\Delta_2$ along this curve.  From $\Delta_2^2+2\Delta_2\Delta_3=\kappa$,
\begin{align*}
  \Delta_3'=-\frac{\Delta_2+\Delta_3}{\Delta_2},
  \qquad
  \Delta_3''=\frac{\Delta_2+2\Delta_3}{\Delta_2^2}.
\end{align*}
Also $x_1$ is fixed, $x_2=x_1+\Delta_2$, and $x_3=x_1+\Delta_2+\Delta_3$, so
\begin{align*}
  x_2'=1,
  \qquad
  x_3'=1+\Delta_3'=-\frac{\Delta_3}{\Delta_2},
  \qquad
  x_3''=\Delta_3''=\frac{\Delta_2+2\Delta_3}{\Delta_2^2}.
\end{align*}
View \eqref{eq:four-F} as the one-variable function
\begin{align*}
  F(\Delta_2)=\Delta_3\psi(x_2)+(1-x_3)\psi(x_1)+\Delta_2\psi(x_3),
\end{align*}
where $\Delta_3$ is given by \eqref{eq:Delta3-curve}.  Applying the chain rule gives
\begin{align}\label{eq:four-gap-Fpp-before}
F''={}&\frac{\Delta_2+2\Delta_3}{\Delta_2^2}(\psi(x_2)-\psi(x_1))
-\frac{2(\Delta_2+\Delta_3)}{\Delta_2}\psi'(x_2)+\Delta_3\psi''(x_2)+\psi'(x_3)+\frac{\Delta_3^2}{\Delta_2}\psi''(x_3).
\end{align}
Since $x_1=x_2-\Delta_2$ and $\psi'''(t)=-6$, the exact Taylor identity yields
\begin{align}\label{eq:taylor-psi-four}
  \psi(x_2)-\psi(x_1)=\Delta_2\psi'(x_2)-\frac{\Delta_2^2}{2}\psi''(x_2)-\Delta_2^3,
\end{align}
Substituting \eqref{eq:taylor-psi-four} and using $x_3=x_2+\Delta_3$, $\psi'(t)=2t-3t^2$, and $\psi''(t)=2-6t$, we get
\begin{align*}
F''&=(3\Delta_1-1)\left(\Delta_2-2\Delta_3-\frac{2\Delta_3^2}{\Delta_2}\right)
+2\Delta_2^2-8\Delta_2\Delta_3-9\Delta_3^2-\frac{6\Delta_3^3}{\Delta_2}.
\end{align*}
Substituting $x_2=\Delta_1+\Delta_2$ into \eqref{eq:Efourdiff}, we obtain
\begin{align}\label{eq:delta1-rewrite}
  3\Delta_1-1=\frac{-2\Delta_2^2+3\Delta_2\Delta_3+2\Delta_3^2}{\Delta_2-\Delta_3}.
\end{align}
Putting \eqref{eq:delta1-rewrite} into \eqref{eq:four-gap-Fpp-before} and taking the common denominator $\Delta_2(\Delta_2-\Delta_3)$ yields
\begin{align}\label{eq:four-second-final}
  \frac{d^2F}{d\Delta_2^2}
  =-\frac{\Delta_3(3\Delta_2^3+\Delta_2^2\Delta_3+7\Delta_2\Delta_3^2-2\Delta_3^3)}{\Delta_2(\Delta_2-\Delta_3)}.
\end{align}
Since $0<\Delta_3<\Delta_2$, we have $\Delta_2(\Delta_2-\Delta_3)>0$.  The bracket in the numerator is also positive, since
\begin{align*}
  7\Delta_2\Delta_3^2-2\Delta_3^3
  =\Delta_3^2(7\Delta_2-2\Delta_3)=\Delta_3^2(5\Delta_2+2(\Delta_2-\Delta_3))>0,
\end{align*}
and the remaining terms $3\Delta_2^3+\Delta_2^2\Delta_3$ are positive.
Thus $d^2F/d\Delta_2^2<0$, contradicting local minimality.

\medskip
\noindent\textbf{Five positive gaps.}
Let $0<x_1<x_2<x_3<x_4<x_5=1$.  Again write $x_0=0$.  The edge density is
\begin{align}
  \beta=\beta_5({\bf x})
  &=\sum_{i=1}^{5}(x_i-x_{i-1})x_{r+1-i}=2x_1+x_3^2-2x_2x_3+2x_4(x_2-x_1),\label{eq:five-beta}
\end{align}
and the $S_{2,1}$-density is
\begin{align}\label{eq:five-F}
  F_5({\bf x})=(x_2-x_1)\psi(x_4)+(x_3-x_2)\psi(x_3)+(x_4-x_3)\psi(x_2)+(1-x_4)\psi(x_1).
\end{align}
By Lemma \ref{lem:first-variation}, applied with $r=5$ and $x_5=1$ fixed, taking $j=1$ and $j=3$ must be equal:
\begin{align}\label{eq:five-balance}
  A(x_4,1)+\psi'(x_1)=A(x_2,x_3)+\psi'(x_3).
\end{align}
We first claim that $x_3>1/3$. If $x_3\le1/3$.  In that case $x_1<x_2<x_3\le1/3$, so $\psi'(t)=2t-3t^2$ is increasing on this interval and thus $\psi'(x_1)<\psi'(x_3)$.  Also $A(x_2,x_3)>0$.  Indeed,
\begin{align}\label{eq:A23}
  A(x_2,x_3)=x_2+x_3-x_2^2-x_2x_3-x_3^2>(x_2+x_3)-(x_2+x_3)^2>0,
\end{align}
where the two inequalities follow from $x_2+x_3<1$ and thus $x_2^2+x_2x_3+x_3^2<(x_2+x_3)^2<x_2+x_3.$
Thus, 
\begin{align*}
  A(x_2,x_3)+\psi'(x_3)>\psi'(x_1).
\end{align*}
On the other hand,
\begin{align*}
  A(x_4,1)+\psi'(x_1)=-x_4^2+\psi'(x_1)<\psi'(x_1),
\end{align*}
contradicting \eqref{eq:five-balance}.  Hence $x_4>x_3>1/3$.

Fix $x_1,x_3$ and $\beta$, vary $x_2$, and determine
\begin{align}\label{eq:x4-function}
  x_4(x_2)=\frac{\beta-2x_1-x_3^2+2x_2x_3}{2(x_2-x_1)}.
\end{align}
Then
\begin{align}\label{eq:x4-derivatives}
  x_4'(x_2)=-\frac{x_4-x_3}{x_2-x_1},
  \qquad
  x_4''(x_2)=\frac{2(x_4-x_3)}{(x_2-x_1)^2}.
\end{align}
Differentiating \eqref{eq:five-F} twice along this curve gives
\begin{align*}
\frac{d^2F}{dx_2^2}={}&2x_4'\psi'(x_4)+(x_2-x_1)(x_4')^2\psi''(x_4)+(x_2-x_1)x_4''\psi'(x_4)\\
&+x_4''(\psi(x_2)-\psi(x_1))+2x_4'\psi'(x_2)+(x_4-x_3)\psi''(x_2).
\end{align*}
Substituting \eqref{eq:x4-derivatives}, the two $\psi'(x_4)$-terms cancel, and we get
\begin{align}\label{eq:five-gap-Fpp-before}
\frac{d^2F}{dx_2^2}={}&(x_4-x_3)\psi''(x_2)-\frac{2(x_4-x_3)}{x_2-x_1}\psi'(x_2)+\frac{(x_4-x_3)^2}{x_2-x_1}\psi''(x_4)
+\frac{2(x_4-x_3)}{(x_2-x_1)^2}(\psi(x_2)-\psi(x_1)).
\end{align}
Since $\psi'''(t)=-6$, the Taylor expansion of $\psi$ at $x_2$ gives the exact identity
\begin{align}\label{eq:taylor-psi-five}
  \psi(x_2)-\psi(x_1)=(x_2-x_1)\psi'(x_2)-\frac{(x_2-x_1)^2}{2}\psi''(x_2)-(x_2-x_1)^3.
\end{align}
Using \eqref{eq:taylor-psi-five} and $\psi''(x_4)=2-6x_4=-2(3x_4-1)$,  \eqref{eq:five-gap-Fpp-before} reduces to
\begin{align}\label{eq:five-second-final}
  \frac{d^2F}{dx_2^2}
  =-\frac{2(x_4-x_3)}{x_2-x_1}\left((x_2-x_1)^2+(x_4-x_3)(3x_4-1)\right)<0,
\end{align}
since $x_4>x_3>1/3$ and $x_2>x_1$.  This contradicts local minimality.

\medskip
\noindent\textbf{Six or more positive gaps.}
Assume $r\ge6$.  Again write $\Delta_i=x_i-x_{i-1}$.  Since $x_r=1$, the edge density is
\begin{align}\label{eq:six-beta}
  \beta_r({\bf x})
  &=\sum_{i=1}^r\Delta_i x_{r+1-i}=\Delta_1x_r+\Delta_2x_{r-1}+\cdots+\Delta_{r-1}x_2+\Delta_rx_1,
\end{align}
and, since $\psi(1)=0$, the $S_{2,1}$-density is
\begin{align}\label{eq:six-F}
  F_r({\bf x})
  &=\sum_{i=1}^r\Delta_i\psi(x_{r+1-i})=\Delta_2\psi(x_{r-1})+\Delta_3\psi(x_{r-2})+\cdots+\Delta_r\psi(x_1).
\end{align}
By Lemma \ref{lem:first-variation}, applied with $x_r=1$ fixed, the quantities corresponding to $j=1$ and $j=r-2$ must be equal:
\begin{align}\label{eq:six-balance}
  A(x_{r-1},1)+\psi'(x_1)=A(x_2,x_3)+\psi'(x_{r-2}).
\end{align}
If $x_{r-2}\le1/3$, then $x_2<x_3<x_{r-2}\le1/3$.  Since $\psi'$ is increasing on $[0,1/3]$, we have $\psi'(x_{r-2})>\psi'(x_1)$.  Also $A(x_2,x_3)>0$ by the same argument as \eqref{eq:A23}.  Thus the right-hand side of \eqref{eq:six-balance} is greater than $\psi'(x_1)$, whereas
\begin{align*}
  A(x_{r-1},1)+\psi'(x_1)=-x_{r-1}^2+\psi'(x_1)<\psi'(x_1),
\end{align*}
a contradiction.  Therefore
\begin{align}\label{eq:high-levels}
  x_{r-1}>x_{r-2}>1/3.
\end{align}
Now perform an edge-preserving move of the threshold point classes.  Let $\mathbf{y}(t)=(y_1(t),\ldots,y_r(t))$ be the endpoint list after the move.  Increase $x_{r-2}$ by $t\Delta_2$, decrease $x_{r-1}$ by $t\Delta_3$, and keep all other endpoints fixed:
\begin{align*}
  y_i(t)=x_i\quad(i\ne r-2,r-1),
  \qquad
  y_{r-2}(t)=x_{r-2}+t\Delta_2,
  \qquad
  y_{r-1}(t)=x_{r-1}-t\Delta_3.
\end{align*}
For sufficiently small $t$, all point classes still have positive weight.  The choice of speeds comes from the anti-diagonal pairing: the weight $\Delta_3$ is paired with the degree level $x_{r-2}$, and the weight $\Delta_2$ is paired with the degree level $x_{r-1}$.  By~\eqref{eq:six-beta}, when all other endpoints are fixed, the part of the edge-density sum containing $x_{r-2}$ or $x_{r-1}$ is
\begin{align}\label{eq:rge6-edge-part}
  &\Delta_3x_{r-2}+\Delta_2x_{r-1}+(x_{r-2}-x_{r-3})x_3+(x_{r-1}-x_{r-2})x_2+(1-x_{r-1})x_1\notag\\
  &=2\Delta_3x_{r-2}+2\Delta_2x_{r-1}+x_1-x_3x_{r-3}.
\end{align}
This expression is linear in the two moved endpoints.  The coefficient of $x_{r-2}$ is $2\Delta_3$, and the coefficient of $x_{r-1}$ is $2\Delta_2$.
Hence the change in edge density is exactly
\begin{align*}
  2\Delta_3(t\Delta_2)+2\Delta_2(-t\Delta_3)=0,
\end{align*}
so $\beta_r(\mathbf{y}(t))=\beta_r(\mathbf{x})$ for all sufficiently small $t$.

Along $\mathbf{y}(t)$, the part of $F_r(\mathbf{y}(t))$ that depends on $t$ is
\begin{align}\label{eq:rge6-Fr-dependent}
&\Delta_3\psi(x_{r-2}+t\Delta_2)+\Delta_2\psi(x_{r-1}-t\Delta_3)+(x_{r-2}+t\Delta_2-x_{r-3})\psi(x_3)\notag\\
&+(x_{r-1}-t\Delta_3-x_{r-2}-t\Delta_2)\psi(x_2)+(1-x_{r-1}+t\Delta_3)\psi(x_1),
\end{align}
up to terms independent of $t$.  The last three terms in  \eqref{eq:rge6-Fr-dependent} are linear in $t$, so their second derivative is zero.  The first two terms give
\begin{align*}
  \left.\frac{d^2}{dt^2}\Delta_3\psi(x_{r-2}+t\Delta_2)\right|_{t=0}
  =\Delta_3\psi''(x_{r-2})\Delta_2^2,
\quad\text{ and }\quad
  \left.\frac{d^2}{dt^2}\Delta_2\psi(x_{r-1}-t\Delta_3)\right|_{t=0}
  =\Delta_2\psi''(x_{r-1})\Delta_3^2.
\end{align*}
Therefore
\begin{align}\label{eq:six-second}
  \left.\frac{d^2}{dt^2}F_r(\mathbf{y}(t))\right|_{t=0}
  =\Delta_3\psi''(x_{r-2})\Delta_2^2+\Delta_2\psi''(x_{r-1})\Delta_3^2<0,
\end{align}
since $x_{r-1}>x_{r-2}>1/3$ and $\psi''(s)=2-6s<0$ for $s>1/3$.  This exact edge-preserving move lowers $F_r$ to second order, which is impossible at a relative interior local minimum.  Therefore no strict relative-interior local-minimum counterexample with $x_r=1$ exists.
\end{proof}

We now assemble the threshold lower theorem from the finite-dimensional 
estimates.

\begin{proof}[Proof of Lemma \ref{prop:threshold-lower}]
First prove the finite-dimensional estimate
\begin{align}\label{eq:finite-dimensional-estimate}
  F_r(\mathbf{x})\ge M(\beta_r(\mathbf{x}))
\end{align}
for every finite $r\ge0$ and every weak closed staircase pattern
\begin{align*}
  0=x_0\le x_1\le\cdots\le x_r\le1.
\end{align*}
If not, choose a counterexample with minimal $r$, and then minimize
\begin{align*}
  D_r(\mathbf{x})\coloneqq F_r(\mathbf{x})-M(\beta_r(\mathbf{x}))
\end{align*}
on the compact set $0=x_0\le\cdots\le x_r\le1$.  This minimum is negative.  If $\Delta_j=0$, then Lemma \ref{lem:zero-gap} gives a shorter counterexample with the same $\beta$ and $F$, contradiction.  Thus all gaps are positive.  Moreover, $\mathbf{x}$ is a local minimum of $F_r$ under the fixed edge-density constraint $\beta_r=\beta_r(\mathbf{x})$: for every nearby $\mathbf{y}$ with $\beta_r(\mathbf{y})=\beta_r(\mathbf{x})$,
\begin{align*}
  F_r(\mathbf{y})-M(\beta_r(\mathbf{x}))=D_r(\mathbf{y})\ge D_r(\mathbf{x})=F_r(\mathbf{x})-M(\beta_r(\mathbf{x})).
\end{align*}
If $x_r<1$, this contradicts Lemma \ref{lem:no-interior-xrless1}; if $x_r=1$, it contradicts Lemma \ref{lem:no-interior-xreq1}.  Hence \eqref{eq:finite-dimensional-estimate} holds.

For $G_n$, take its full normalized endpoint list
\begin{align*}
  \mathbf{x}^{(n)}=(x_0^{(n)},\ldots,x_{r_n}^{(n)}).
\end{align*}
Each positive twin class is nonempty, so $r_n\le n$.  Thus
$\mathbf{x}^{(n)}$ is a finite weak closed staircase pattern, and
\eqref{eq:finite-dimensional-estimate} applies with $r=r_n$.  Moreover,
all finite gaps
\begin{align*}
  x_i^{(n)}-x_{i-1}^{(n)}=\frac{|V_i|}{n}
\end{align*}
are positive.  Hence no zero-gap endpoint deletion is involved at this stage:
we apply the finite-dimensional estimate to the full list of positive point
classes.  It gives
\begin{align}\label{eq:finite-pattern-applied}
  F_{r_n}(\mathbf{x}^{(n)})
  \ge M(\beta_{r_n}(\mathbf{x}^{(n)})).
\end{align}
On the other hand, \eqref{eq:finite-F} identifies
$F_{r_n}(\mathbf{x}^{(n)})$, up to an $O(1/n)$ error, with the normalized
$S_{2,1}$-count of $G_n$.  Combining \eqref{eq:finite-pattern-applied} with \eqref{eq:finite-F} and \eqref{eq:threshold-Fr-asymptotic} gives
\begin{align*}
  \frac{N(S_{2,1},G_n)}{n^4}
  \ge M(\beta_{r_n}(\mathbf{x}^{(n)}))-O(1/n),
\end{align*}
where, by \eqref{eq:finite-beta},
\begin{align*}
  \beta_{r_n}(\mathbf{x}^{(n)})=\frac{2e(G_n)}{n^2}+O(1/n)\to\beta.
\end{align*}
Therefore, by continuity of $M$,
\begin{align*}
  \liminf_{n\to\infty}\frac{N(S_{2,1},G_n)}{n^4}
  \ge M(\beta)=\min_{z\in J_\beta}C_\beta(z).
\end{align*}
This proves Lemma \ref{prop:threshold-lower}.
\end{proof}

\section*{Declaration on the use of AI}
The authors used generative AI tools to assist in discussing proof strategies, checking proofs, and improving exposition.

\bibliographystyle{alpha}
\bibliography{s21_profiles_referee_submission_final}

\newcommand{\etalchar}[1]{$^{#1}$}
\begin{thebibliography}{BLM{\etalchar{+}}26}

\bibitem[AK78]{AhlswedeKatona}
Rudolf Ahlswede and Gyula O.~H. Katona.
\newblock Graphs with maximal number of adjacent pairs of edges.
\newblock {\em Acta Mathematica Academiae Scientiarum Hungaricae}, 32:97--120,
  1978.

\bibitem[BEHJ95]{BollobasEgawaHarrisJin1995}
B.~Bollob{\'a}s, Y.~Egawa, A.~Harris, and G.~Jin.
\newblock The maximal number of induced {$r$}-partite subgraphs.
\newblock {\em Graphs Combin.}, 11:1--19, 1995.

\bibitem[BGH{\etalchar{+}}25]{BGHKS}
A.~Basit, B.~Granet, D.~Horsley, A.~K{\"u}ndgen, and K.~Staden.
\newblock The semi-inducibility problem, 2025.
\newblock arXiv:2501.09842.

\bibitem[BGL{\etalchar{+}}26]{BodnarGaoLeonLiuPikhurkoSun2026}
L.~Bodn{\'a}r, J.~Gao, J.~Le{\'o}n, X.~Liu, O.~Pikhurko, and S.~Sun.
\newblock The inducibility of 6-vertex graphs, 2026.
\newblock arXiv:2606.00290.

\bibitem[BLM{\etalchar{+}}26]{BLMPV}
J.~Balogh, B.~Lidick{\'y}, D.~Mubayi, F.~Pfender, and J.~Volec.
\newblock Semi-inducibility of some small graphs, 2026.
\newblock arXiv:2601.03433.

\bibitem[BP25a]{BodnarPikhurkoSemi2025}
L.~Bodn{\'a}r and O.~Pikhurko.
\newblock Semi-inducibility of 4-vertex graphs, 2025.
\newblock arXiv:2510.24336.

\bibitem[BP25b]{BodnarPikhurko}
L.~Bodn{\'a}r and O.~Pikhurko.
\newblock Some exact inducibility-type results for graphs via flag algebras,
  2025.
\newblock arXiv:2507.01596.

\bibitem[BS94]{BrownSidorenko1994}
J.~I. Brown and A.~Sidorenko.
\newblock The inducibility of complete bipartite graphs.
\newblock {\em J. Graph Theory}, 18(6):629--645, 1994.

\bibitem[CN25]{CN25}
H.~Chen and J.~A. Noel.
\newblock On alternating 6-cycles in edge-coloured graphs.
\newblock 2025.
\newblock arXiv:2505.09809.

\bibitem[HHN14]{HatamiHirstNorine2014}
H.~Hatami, J.~Hirst, and S.~Norine.
\newblock The inducibility of blow-up graphs.
\newblock {\em J. Combin. Theory Ser. B}, 109:196--212, 2014.

\bibitem[Kat68]{Katona1968}
Gyula O.~H. Katona.
\newblock A theorem of finite sets.
\newblock {\em Theory of Graphs}, pages 187--207, 1968.

\bibitem[Kru63]{Kruskal1963}
Joseph~B. Kruskal.
\newblock The number of simplices in a complex.
\newblock {\em Mathematical Optimization Techniques}, pages 251--278, 1963.

\bibitem[LM21]{LiuMubayiFeasible}
X.~Liu and D.~Mubayi.
\newblock The feasible region of hypergraphs.
\newblock {\em J. Combin. Theory Ser. B}, 148:23--59, 2021.

\bibitem[LMR23]{LiuMubayiReiher}
Xizhi Liu, Dhruv Mubayi, and Christian Reiher.
\newblock The feasible region of induced graphs.
\newblock {\em Journal of Combinatorial Theory, Series B}, 158:105--135, 2023.

\bibitem[LS06]{LovaszSzegedy}
L.~Lov{\'a}sz and B.~Szegedy.
\newblock Limits of dense graph sequences.
\newblock {\em J. Combin. Theory Ser. B}, 96:933--957, 2006.

\bibitem[PG75]{PippengerGolumbic}
N.~Pippenger and M.~Golumbic.
\newblock The inducibility of graphs.
\newblock {\em J. Combin. Theory Ser. B}, 19:189--203, 1975.

\bibitem[Raz07]{Razborov}
A.~A. Razborov.
\newblock Flag algebras.
\newblock {\em J. Symbolic Logic}, 72:1239--1282, 2007.

\bibitem[Raz08]{RazborovTriangle}
Alexander~A. Razborov.
\newblock On the minimal density of triangles in graphs.
\newblock {\em Combinatorics, Probability and Computing}, 17(4):603--618, 2008.

\bibitem[Rud76]{Rudin}
W.~Rudin.
\newblock {\em Principles of Mathematical Analysis}.
\newblock McGraw-Hill, 3 edition, 1976.

\bibitem[RW18]{ReiherWagner}
C.~Reiher and S.~Wagner.
\newblock Maximum star densities.
\newblock {\em Studia Sci. Math. Hungar.}, 55:238--259, 2018.

\end{thebibliography}

\appendix
\section{Proof of Proposition \ref{lem:split-quasiclique-comparison}}
\begin{proof}[Proof of Proposition \ref{lem:split-quasiclique-comparison}]

 Let
 \[
 Q(t):=6t^5-384t^4+338t^3-89t^2-4t+2
 \]
 be the polynomial in \eqref{eq:switching-equation}. 

Recall that $c(\beta)=\beta^{3/2}(1-\sqrt{\beta})$. It remains to compare \(s(\tau^2)\) with \(c(\tau^2)\). Put
\[
\Phi(x,y):=xy^2(1-y)+y(x+y)^2(1-x-y).
\]
Let \(0<\tau\le 1/2\). By definition,
\[
s(\tau^2)
=
\max\{\Phi(x,y): x,y\ge0,\ x+y\le1,\ 2xy+y^2=\tau^2\}.
\]
The point \((x,y)=(0,\tau)\) is feasible and gives
\[
\Phi(0,\tau)=\tau^3(1-\tau)=c(\tau^2).
\]

Let \((x,y)\) be feasible. Since \(\tau>0\), we have \(y>0\). Write
$y=\tau\lambda $. The constraint \(2xy+y^2=\tau^2\) gives
\begin{align}\label{eq:x}
x=\frac{\tau(1-\lambda^2)}{2\lambda}.
\end{align}
The feasibility conditions \(x\ge0\) and \(x+y\le1\) are equivalent to
\begin{equation}\label{eq:app-transition-feasible}
0<\lambda\le1,
\qquad
\tau\le \frac{2\lambda}{1+\lambda^2}.
\end{equation}
Conversely, every \(\lambda\) satisfying
\eqref{eq:app-transition-feasible} gives a feasible pair by the above
formulae.

Substituting~\eqref{eq:x} and $y=\tau\lambda$
into \(\Phi(x,y)\), a direct expansion gives
\begin{equation}\label{eq:app-transition-compare}
\Phi\left(\frac{\tau(1-\lambda^2)}{2\lambda},\tau\lambda\right)
-
c(\tau^2)
=
\frac{\tau^3(1-\lambda)^2}{8\lambda^2}T_\tau(\lambda),
\end{equation}
where
\[
T_\tau(\lambda)=B(\lambda)-\tau D(\lambda),
\qquad
B(\lambda)=-2\lambda((\lambda+1)^2-2),
\qquad
D(\lambda)=(1+\lambda)^2(1-3\lambda^2).
\]
For \(0<\lambda<1\), the prefactor in
\eqref{eq:app-transition-compare} is positive, so the sign of the difference
is the sign of \(T_\tau(\lambda)\). At \(\lambda=1\), the difference is zero.

On the interval \(0<\lambda<\sqrt{2}-1\), both \(B(\lambda)\) and
\(D(\lambda)\) are positive. Define
\begin{equation}\label{eq:app-transition-gdef}
g(\lambda):=\frac{B(\lambda)}{D(\lambda)}
=
\frac{-2\lambda((\lambda+1)^2-2)}
{(1+\lambda)^2(1-3\lambda^2)}.
\end{equation}
Then
\[
T_\tau(\lambda)=D(\lambda)(g(\lambda)-\tau),
\]
and therefore
\begin{equation}\label{eq:app-transition-Tsign}
T_\tau(\lambda)>0
\quad\Longleftrightarrow\quad
\tau<g(\lambda)
\qquad
(0<\lambda<\sqrt{2}-1).
\end{equation}

A direct differentiation gives
\begin{equation}\label{eq:app-transition-gderiv}
g'(\lambda)
=
-\frac{2H(\lambda)}
{(1+\lambda)^3(3\lambda^2-1)^2},
\end{equation}
where
\[
H(\lambda)=3\lambda^5+9\lambda^4-8\lambda^3+5\lambda-1.
\]
Moreover,
\[
H'(\lambda)=15\lambda^4+36\lambda^3-24\lambda^2+5.
\]
For \(0\le\lambda\le\sqrt{2}-1\), since \(\sqrt{2}-1<1/2\), we have
\[
H'(\lambda)\ge 5-24\lambda^2
\ge 5-24(\sqrt{2}-1)^2>0.
\]
Also
$H(0)=-1$ and
$H(\sqrt{2}-1)=80-56\sqrt{2}>0.$ 
Hence \(H\) has a unique zero \(\lambda_0\) in
\((0,\sqrt{2}-1)\). Since the denominator in
\eqref{eq:app-transition-gderiv} is positive, it follows that \(g\) is
increasing on \((0,\lambda_0)\) and decreasing on
\((\lambda_0,\sqrt{2}-1)\). Consequently,
\begin{equation}\label{eq:app-transition-gmax}
g(\lambda)\le g(\lambda_0)
\qquad
(0<\lambda<\sqrt{2}-1).
\end{equation}
Put
\[
m:=g(\lambda_0).
\]

We next identify \(m\). First, \(0<m<1/3\). Indeed, \(m>0\) because
\(B(\lambda)\) and \(D(\lambda)\) are positive on
\(0<\lambda<\sqrt{2}-1\). Also,
\[
D(\lambda)-3B(\lambda)
=
1-4\lambda+10\lambda^2-3\lambda^4
>
1-4\lambda+7\lambda^2>0
\]
on \(0<\lambda<\sqrt{2}-1\). Since \(D(\lambda)>0\) on this interval, this
gives
\[
g(\lambda)<\frac13
\qquad
(0<\lambda<\sqrt{2}-1),
\]
and hence \(m<1/3\).

Let
\[
F(\lambda,\gamma):=\gamma D(\lambda)-B(\lambda).
\]
Since \(F(\lambda,g(\lambda))\equiv0\), we have
$F(\lambda_0,m)=0$. 
Moreover, \(g'(\lambda_0)=0\). Differentiating
\(F(\lambda,g(\lambda))\equiv0\) with respect to \(\lambda\) and then
setting \(\lambda=\lambda_0\), we obtain
$F_\lambda(\lambda_0,m)=0$.

Thus \(\lambda_0\) is a multiple root of \(F(\lambda,m)\), viewed as a
polynomial in \(\lambda\). Since the leading coefficient of \(F(\lambda,m)\)
is \(-3m\ne0\), the quartic discriminant vanishes.

Expanding,
\[
F(\lambda,\gamma)
=
-3\gamma\lambda^4
+(2-6\gamma)\lambda^3
+(4-2\gamma)\lambda^2
+(2\gamma-2)\lambda
+\gamma .
\]
The discriminant with respect to \(\lambda\) is
\[
\operatorname{Disc}_\lambda F(\lambda,\gamma)
=
256Q(\gamma).
\]
Therefore $Q(m)=0$.
Since \(0<m<1/3\) and \(Q\) has a unique root in \((0,1/3)\), we get
$m=t_*$.
Combining this with \eqref{eq:app-transition-gmax}, we have
\begin{equation}\label{eq:app-transition-g-upper}
g(\lambda)\le t_*
\qquad
(0<\lambda<\sqrt{2}-1).
\end{equation}

We shall also use the following feasibility bound. A direct subtraction gives
\begin{equation}\label{eq:app-transition-feasibility-bound}
\frac{2\lambda}{1+\lambda^2}-g(\lambda)
=
\frac{4\lambda^2(2-\lambda-2\lambda^2-\lambda^3)}
{(1+\lambda)^2(1+\lambda^2)(1-3\lambda^2)}
>0
\end{equation}
for \(0<\lambda<\sqrt{2}-1\). Indeed, the denominator is positive because
\(\sqrt{2}-1<1/\sqrt{3}\). Also \(\sqrt{2}-1<1/2\), so
\[
2-\lambda-2\lambda^2-\lambda^3
>
2-\frac12-2\cdot\frac14-\frac18>0.
\]

Now assume first that \(0<\tau<t_*\). Choose \(\lambda=\lambda_0\). Since
\[
\tau<t_*=g(\lambda_0)
<
\frac{2\lambda_0}{1+\lambda_0^2}
\]
by \eqref{eq:app-transition-feasibility-bound}, the value \(\lambda_0\)
satisfies the feasibility condition \eqref{eq:app-transition-feasible}.
Moreover,
\[
T_\tau(\lambda_0)
=
D(\lambda_0)(t_*-\tau)>0,
\]
because \(D(\lambda_0)>0\). Since \(0<\lambda_0<1\), the prefactor in
\eqref{eq:app-transition-compare} is strictly positive. Therefore
\[
\Phi\left(\frac{\tau(1-\lambda_0^2)}{2\lambda_0},\tau\lambda_0\right)
>
c(\tau^2).
\]
Hence
\[
s(\tau^2)>c(\tau^2)
\qquad
(0<\tau<t_*).
\]

Finally assume that \(t_*\le\tau\le1/2\). We show that every feasible
\(\lambda\) gives value at most \(c(\tau^2)\). By
\eqref{eq:app-transition-compare}, it is enough to prove
\[
T_\tau(\lambda)\le0
\qquad
(0<\lambda\le1).
\]

First let \(0<\lambda<\sqrt{2}-1\). Then
\eqref{eq:app-transition-g-upper} gives
\[
g(\lambda)\le t_*\le\tau.
\]
Since \(D(\lambda)>0\), we obtain
\[
T_\tau(\lambda)
=
D(\lambda)(g(\lambda)-\tau)\le0.
\]

Next let \(\sqrt{2}-1\le\lambda<1/\sqrt{3}\). Then
\[
B(\lambda)\le0,
\qquad
D(\lambda)>0,
\]
and therefore
\[
T_\tau(\lambda)=B(\lambda)-\tau D(\lambda)<0.
\]

Finally let \(1/\sqrt{3}\le\lambda\le1\). Then \(D(\lambda)\le0\), so
\(T_\tau(\lambda)=B(\lambda)-\tau D(\lambda)\) is increasing as a function of
\(\tau\). Since \(\tau\le1/2\),
\[
T_\tau(\lambda)\le T_{1/2}(\lambda).
\]
A direct simplification gives
\[
T_{1/2}(\lambda)
=
\frac{\lambda-1}{2}
\left(\lambda(3\lambda^2-1)+5\lambda^2+1\right).
\]
For \(1/\sqrt{3}\le\lambda\le1\), the factor in parentheses is positive, and
\(\lambda-1\le0\). Hence
\[
T_{1/2}(\lambda)\le0,
\]
and so \(T_\tau(\lambda)\le0\).

Thus every feasible point has value at most \(c(\tau^2)\). The endpoint
\(\lambda=1\), equivalently \((x,y)=(0,\tau)\), is feasible and gives
equality. Therefore
\[
s(\tau^2)=c(\tau^2)
\qquad
(t_*\le\tau\le1/2).
\]

Since \(c(\tau^2)=\tau^3(1-\tau)=\beta^{3/2}(1-\sqrt{\beta})\) with
\(\beta=\tau^2\), the desired comparison follows.
\end{proof}

\end{document}